\numberwithin{equation}{section}
\theoremstyle{plain} 
\newtheorem{theorem}{\indent\sc Theorem}[section] 
\newtheorem{lemma}[theorem]{\indent\sc Lemma}
 \newtheorem{proposition}[theorem]{\indent\sc Proposition}
\theoremstyle{definition} 
 \newtheorem{remark}[theorem]{\indent\sc Remark}
\def\address#1#2{\begingroup
 \noindent\parbox[t]{7.8cm}{%
 \small{\scshape\ignorespaces#1}\par\vskip1ex
 \noindent\small{\itshape E-mail address}%
 \/: #2\par\vskip4ex}\hfill%
 \endgroup}%
\title{
Global existence for a system of quasi-linear wave equations 
in $3$D satisfying the weak null condition}
 \author{
%
 \textsc{Kunio Hidano and Kazuyoshi Yokoyama} 
}
 \date{} 
\begin{document}

 \maketitle

 \footnote{ 
 2010 \textit{Mathematics Subject Classification}.
Primary 35L52; Secondary 35L72.
 }
 \footnote{ 
 \textit{Key words and phrases}. 
Null condition, weak null condition, system of quasi-linear wave equations.
 }
 \footnote{ 
The authors are grateful to the referee for 
reading the manuscript carefully, 
pointing out some careless mistakes, 
and bringing \cite{DLY}, \cite{LT} to their attention. 
It is a pleasure to thank Professor Soichiro Katayama. 
His suggestion was helpful in simplifying the proof of 
Proposition A.1. 
Special thanks also go to Professor Dongbing Zha for 
bringing \cite{KMS} to the attention of the authors. 
The authors are partly supported by 
the Grant-in-Aid for Scientific Research (C) 
15K04955, Japan Society for the Promotion of Science. 
}
\begin{abstract}
We show global existence of small solutions 
to the Cauchy problem for a system of 
quasi-linear wave equations in three space dimensions. 
The feature of the system 
lies in that it satisfies the weak null condition, 
though we permit the presence of some quadratic nonlinear terms 
which do not satisfy the null condition.  
Due to the presence of such quadratic terms, 
the standard argument no longer works 
for the proof of global existence. 
To get over this difficulty, 
we extend the ghost weight method of Alinhac 
so that it works for the system under consideration. 
The original theorem of Alinhac 
for the scalar unknowns is also refined. 
\end{abstract}


\section{Introduction}
In \cite{Al2010}, Alinhac studied the Cauchy problem 
for the quasi-linear wave equation in three space dimensions 
of the form
\begin{equation}
\begin{cases}
\displaystyle{
\partial_t^2 u-\Delta u
+
G^{\alpha\beta\gamma}
(\partial_\gamma u)
(\partial_{\alpha\beta}^2 u)=0,
\,\,t>0,\,x\in{\mathbb R}^3
             },\\
\displaystyle{
u(0)=\varphi_0,\,\,\partial_t u(0)=\varphi_1
             },
\end{cases}
\end{equation}
where $u:(0,\infty)\times{\mathbb R}^3\to{\mathbb R}$, 
$G^{\alpha\beta\gamma}\in{\mathbb R}$, 
$G^{\alpha\beta\gamma}=G^{\beta\alpha\gamma}$, 
$\partial_0:=\partial_t$, 
and $\varphi_0,\,\varphi_1\in C_0^\infty({\mathbb R}^3)$. 
Here, and in the following, 
when the same index is above and below, 
summation is assumed 
from $0$ to $3$ for 
$\alpha$, $\beta$, $\gamma$. 
Using the remarkable energy inequality 
(see page 92 of \cite{Al2010}), 
he proved the following:

\begin{theorem}[Alinhac]
Suppose that the coefficients 
$G^{\alpha\beta\gamma}$ satisfy the null condition$:$ 
\begin{equation}
G^{\alpha\beta\gamma}X_\alpha X_\beta X_\gamma=0\,\,
\mbox{for any $X=(X_0,\dots,X_3)$ with 
$X_0^2=X_1^2+X_2^2+X_3^2.$}
\end{equation}
Set 
\begin{equation}
R_*:=
\inf
\bigl\{\,r>0\,:
\,{\rm supp}\,\{\varphi_0,\varphi_1\}
\subset 
\{
x\in{\mathbb R}^3:|x|<r
\}
\bigr\}.
\end{equation}
Then, there exist constants $C>0$, 
$0<\varepsilon<1$ depending on 
$\{G^{\alpha\beta\gamma}\}$ and $R_*$ 
with 
$\varepsilon\to 0$ as $R_*\to\infty$ such that 
if 
\begin{equation}
W_4(u(0))\leq\varepsilon,
\end{equation}
then 
the Cauchy problem $(1.1)$ admits 
a unique global, smooth solution $u(t,x)$ 
satisfying 
\begin{equation}
W_4(u(t))\leq
CW_4(u(0)).
\end{equation}
\end{theorem}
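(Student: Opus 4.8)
The plan is to run a continuity (bootstrap) argument based on a \emph{ghost-weighted} generalized energy inequality obtained after commuting the equation with the Klainerman vector fields (translations, rotations, Lorentz boosts, and scaling). Write $Z$ for a generic element of the family $\{\partial_\alpha,\ \Omega_{ij}=x_i\partial_j-x_j\partial_i,\ L_i=x_i\partial_t+t\partial_i,\ S=t\partial_t+r\partial_r\}$; let $W_k(u(t))$ denote a generalized energy of order $k$ comparable to $\sum_{|a|\le k}\|\partial Z^au(t)\|_{L^2(\mathbb R^3)}$; set $r=|x|$, $\sigma=r-t$, and fix a bounded increasing weight $q=q(\sigma)$ with $q'(\sigma)\simeq\langle\sigma\rangle^{-1-\mu}$ for a small $\mu>0$; and write $\bar\partial$ for the good derivatives $\partial_t+\partial_r$ and $r^{-1}\Omega_{ij}$, which satisfy $|\bar\partial v|\lesssim\langle t+r\rangle^{-1}\sum_{|a|\le1}|Z^av|$. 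Rewrite $(1.1)$ as $g^{\alpha\beta}(\partial u)\,\partial^2_{\alpha\beta}u=0$ with $g^{\alpha\beta}=m^{\alpha\beta}+G^{\alpha\beta\gamma}\partial_\gamma u$, $m$ the Minkowski metric ($g$ is symmetric since $G^{\alpha\beta\gamma}=G^{\beta\alpha\gamma}$). Local existence and persistence of $C^\infty$-regularity being standard, let $T^*$ be the supremum of those $T$ for which the solution exists on $[0,T)$ and obeys the bootstrap bound $W_4(u(t))\le A\varepsilon$ there, $A$ a large constant to be fixed; by finite speed of propagation $u(t,\cdot)$ is supported in $\{|x|<t+R_*\}$, so $\langle\sigma\rangle\lesssim\langle t\rangle+R_*$ on its support --- this is the origin of the $R_*$-dependence of $\varepsilon$.

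Commuting with $Z^a$, $|a|\le4$, gives $g^{\alpha\beta}\partial^2_{\alpha\beta}(Z^au)=F_a$, where $F_a$ collects the commutators $[Z^a,\,G^{\alpha\beta\gamma}(\partial_\gamma u)\partial^2_{\alpha\beta}]Z^{a'}u$ ($a'<a$) together with the terms obtained by distributing $Z^a$ over the coefficient and over $\partial^2u$. The decisive structural fact is that evaluating $(1.2)$ on the null covector $\hat\omega=(-1,\omega)$ kills the worst component $G^{\alpha\beta\gamma}\hat\omega_\alpha\hat\omega_\beta\hat\omega_\gamma$; hence both $F_a$ and the perturbation $(g^{\alpha\beta}-m^{\alpha\beta})\partial^2_{\alpha\beta}(Z^au)$ may be written as finite sums of products in which at least one factor is a good derivative $\bar\partial Z^bv$, $|b|\le|a|$. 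Multiplying $g^{\alpha\beta}\partial^2_{\alpha\beta}(Z^au)=F_a$ by $e^{q(\sigma)}\partial_t(Z^au)$, integrating over $\mathbb R^3$, and integrating by parts --- using the symmetry and the smallness $|g-m|\lesssim A\varepsilon$ --- yields
\begin{align*}
\frac{d}{dt}E_a(t)+\frac12\int_{\mathbb R^3}e^{q(\sigma)}q'(\sigma)\,|\bar\partial Z^au|^2\,dx
&=\int_{\mathbb R^3}e^{q(\sigma)}F_a\,\partial_t(Z^au)\,dx+\mathcal R_a(t),
\end{align*}
where $E_a(t)\simeq\int_{\mathbb R^3}e^{q}\bigl(|\partial_tZ^au|^2+|\nabla Z^au|^2\bigr)\,dx$ is equivalent to the $a$-th energy, and $\mathcal R_a$ gathers the contributions of $\partial_tg$ and $\partial_ig$ (which, after the same null decomposition of $\partial g=G\,\partial^2u$, again carry a good derivative). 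The positive space-time term $\int e^qq'|\bar\partial Z^au|^2$ is the gain supplied by the ghost weight and is the crux of the method.

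To close the argument, estimate the right-hand side by Cauchy--Schwarz, always placing the good-derivative factor into the space-time integral $\int_0^t\!\!\int_{\mathbb R^3}e^qq'|\bar\partial(\cdot)|^2$, bounding the factors carrying $\le2$ vector fields in $L^\infty$ via the Klainerman--Sobolev inequality ($|Z^cu|\lesssim\langle t+r\rangle^{-1}W_4(u)$ for $|c|\le2$), and bounding the remaining high-order factors in $L^2$ by $W_4(u(t))$; the decay $\langle t+r\rangle^{-1}$ coming from $|\bar\partial v|\lesssim\langle t+r\rangle^{-1}\sum_{|a|\le1}|Z^av|$, together with the support bound $\langle\sigma\rangle\lesssim\langle t\rangle+R_*$, converts the borderline weight $q'(\sigma)$ into sufficient time-decay. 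Summing over $|a|\le4$ and integrating in time leads to
\begin{align*}
W_4(u(t))^2+\sum_{|a|\le4}\int_0^t\!\!\int_{\mathbb R^3}e^{q}q'|\bar\partial Z^au|^2
&\le C_0\,W_4(u(0))^2\\
&\quad+C_1A\varepsilon\Bigl(\sup_{[0,t]}W_4(u)^2+\sum_{|a|\le4}\int_0^t\!\!\int_{\mathbb R^3}e^{q}q'|\bar\partial Z^au|^2\Bigr).
\end{align*}
For $A\varepsilon$ small the last bracket is absorbed on the left, so $W_4(u(t))^2\le2C_0W_4(u(0))^2$ on $[0,T^*)$; choosing $A=2\sqrt{C_0}$ and then $\varepsilon_0$ with $C_1A\varepsilon_0<\tfrac12$ strictly improves the bootstrap bound, whence $T^*=\infty$. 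This furnishes the global smooth solution, together with the bound $(1.5)$; uniqueness follows from the linear energy estimate, and tracking the degradation of the Klainerman--Sobolev and support constants as $R_*\to\infty$ gives $\varepsilon_0\to0$ in that limit.

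The step I expect to be the main obstacle is precisely the ghost-weight energy identity above together with the null decomposition that feeds it: because the equation is quasi-linear, $g^{\alpha\beta}\partial^2_{\alpha\beta}$ has characteristics slightly off the flat cone $r=t$ to which $\sigma$, $q(\sigma)$ and $\bar\partial$ are adapted, so one must verify that \emph{every} error term produced ($\mathcal R_a$, the commutators, and $(g-m)\partial^2Z^au$) retains a good derivative and is dominated by the $q'$-gain rather than overwhelming it. A second, quantitative, difficulty is that $q'(\sigma)\simeq\langle\sigma\rangle^{-1-\mu}$ is only borderline integrable in $\sigma$, so the companion factor $\langle t+r\rangle^{-1}$ --- and hence the compactness of the support of the data --- is indispensable in making the Gronwall-type absorption of the last display close, and is exactly what ties the smallness threshold $\varepsilon_0$ to $R_*$.
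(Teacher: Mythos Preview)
Your outline is essentially Alinhac's ghost-weight argument, which is exactly the approach the paper attributes to Theorem~1.1 (the paper does not reprove it, but reproduces the mechanism in the proof of Proposition~3.1): the weighted multiplier $e^{q(\sigma)}\partial_t(Z^au)$, the null decomposition that forces a factor of $\bar\partial=T$ into every cubic error term, absorption of the $|Tu|^2$ errors by the positive $q'$-term, and the compact-support inequality (the paper's (1.25)) to supply the extra $\langle t\rangle^{-1}$ needed to make the remaining $|\partial u|^2$ errors time-integrable---this last step being precisely where the $R_*$-dependence enters, as you correctly flag. One small slip: in the paper's convention $W_4$ involves $|a|\le 3$, not $|a|\le 4$, so your commutations and sums should run over $|a|\le 3$; otherwise the sketch matches.
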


\begin{remark}
In page 89 of \cite{Al2010}, 
only the nonlinear terms involving 
the spatial derivatives were considered. 
This was just for simplicity.
\end{remark}

Here and in the following discussion, we use the notation: 
\begin{eqnarray}
& &
E_1(u(t))
:=
\frac12
\int_{{\mathbb R}^3}
\bigl(
(\partial_t u(t,x))^2
+
|\nabla u(t,x)|^2
\bigr)dx,\\
& &
W_\kappa(u(t))
:=
\sum_{|a|\leq \kappa-1}
E_1^{1/2}(Z^a u(t)),
\quad \kappa=2,3,\dots
\end{eqnarray}
By $Z$, we mean any of the operators 
$\partial_\alpha$ $(\alpha=0,\dots,3)$, 
$\Omega_{ij}:=x_i\partial_j-x_j\partial_i$ 
$(1\leq i<j\leq 3)$, 
$L_k:=x_k\partial_t+t\partial_k$ 
$(k=1,2,3)$, 
and $S:=t\partial_t+x\cdot\nabla$. 
Also, for a multi-index $a$, 
$Z^a$ stands for any product 
of the $|a|$ these operators. 
We remark that $\partial_t^ku(0,x)$ for $k=2,3,4$ 
can be calculated with the help of the equation (1.1), 
and thus the quantity $W_4(u(0))$ appearing in (1.4) 
is determined by the given initial data 
$(\varphi_0,\varphi_1)$. 

The novelty of this theorem due to Alinhac lies in that 
as for the size of data, 
we have only to assume that 
$W_4(u(0))$ is small enough. 
It should be compared with the fact that 
if we employ the standard energy inequality 
for variable-coefficient hyperbolic operators 
(see, e.g., (6.3.6) of \cite{Hor}) 
and the Klainerman-Sobolev inequalities (see Lemma 2.4 below) 
together with the good commutation relations (2.1)--(2.2) below, 
we can obtain:
\begin{proposition}
Suppose the null condition $(1.2)$. 
Then, there exist constants $C>0$, $0<\varepsilon<1$ 
depending only on the coefficients $G^{\alpha\beta\gamma}$ 
such that if 
\begin{equation}
W_4(u(0))\exp\bigl(CW_5(u(0))\bigr)\leq\varepsilon,
\end{equation}
then the Cauchy problem $(1.1)$ admits a unique 
global smooth solution $u(t,x)$ satisfying
\begin{equation}
W_4(u(t))
\leq
W_4(u(0))\exp\bigl(CW_5(u(0))\bigr),\,\,
W_5(u(t))
\leq
CW_5(u(0))(1+t)^{C\varepsilon}.
\end{equation}
\end{proposition}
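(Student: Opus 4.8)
The plan is to run a standard continuity (bootstrap) argument built on the vector field method. By the local existence theory for quasi-linear wave equations, $(1.1)$ has a unique smooth solution on a maximal interval $[0,T_*)$, and by the usual continuation criterion it suffices to propagate an a priori bound on a fixed high Sobolev norm, which will be controlled by $W_5(u(t))$ through Sobolev embedding. Fix $T<T_*$ and make, on $[0,T]$, the bootstrap assumptions
\[
W_4(u(t))\le 2A,\qquad W_5(u(t))\le 2B(1+t)^{C_0\varepsilon},
\]
where $A:=W_4(u(0))\exp(CW_5(u(0)))$, $B:=CW_5(u(0))$, and $\varepsilon$ (hence $A$) is taken small; the goal is to improve the constants $2$ to $1$, which closes the argument and forces $T_*=\infty$.

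First I would commute $(1.1)$ with the operators $Z^a$. Using the good commutation relations (2.1)--(2.2), for each multi-index $a$ the function $v_a:=Z^a u$ solves a wave equation with the same quasi-linear principal part,
\[
\partial_t^2 v_a-\Delta v_a+G^{\alpha\beta\gamma}(\partial_\gamma u)\,\partial_{\alpha\beta}^2 v_a=F_a ,
\]
where $F_a$ is a finite sum of quadratic terms of the schematic form $(\partial Z^b u)(\partial_{\alpha\beta}^2 Z^c u)$ with $|b|+|c|\le|a|$ and $|b|\ge 1$ (so $|c|\le|a|-1$; the diagonal term $b=0$ stays on the left-hand side), and — a point that must be checked alongside (2.1)--(2.2) — each of these terms still carries the same null structure as the original nonlinearity, since the fields $Z$ map null forms into null forms. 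Applying the energy inequality for variable-coefficient hyperbolic operators ((6.3.6) of \cite{Hor}) to $v_a$, and using that the space-time gradient of the coefficient $G^{\alpha\beta\gamma}\partial_\gamma u$ is bounded in $L^\infty$ by $\|\partial^2 u(s)\|_{L^\infty}$, gives
\[
E_1^{1/2}(Z^a u(t))\le CE_1^{1/2}(Z^a u(0))+C\int_0^t\|F_a(s)\|_{L^2}\,ds+C\int_0^t\|\partial^2 u(s)\|_{L^\infty}\,E_1^{1/2}(Z^a u(s))\,ds .
\]

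The heart of the matter is to estimate these two integrals with the Klainerman--Sobolev inequalities (Lemma 2.4) and the null condition $(1.2)$. For the Gronwall-type term, Lemma 2.4 applied after pulling one derivative out as a first-order energy gives $\|\partial^2 u(s)\|_{L^\infty}\lesssim(1+s)^{-1}W_4(u(s))\lesssim A(1+s)^{-1}$, so that $\int_0^t\|\partial^2 u(s)\|_{L^\infty}\,ds\lesssim A\log(1+t)$; by Gronwall this is exactly what produces the factor $(1+t)^{C_0\varepsilon}$ in $(1.9)$, with $\varepsilon$ comparable to $A$. For $\|F_a(s)\|_{L^2}$, in each quadratic term I would place the factor carrying fewer vector fields in $L^\infty$ and the other in $L^2$: for $|a|\le4$ the low-order factor carries at most two vector fields, so Lemma 2.4 supplies the decay $(1+s)^{-1}$, and the high-order factor is controlled by $W_5(u(s))$. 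When the low-order $L^\infty$ factor is genuinely differentiated at least once this yields $\|F_a(s)\|_{L^2}\lesssim(1+s)^{-1}W_4(u(s))W_5(u(s))$, whose time integral is acceptably bounded; the single borderline term — the one in which all available vector fields sit on one factor and only $\partial^2 u$, with no vector field, is left for the $L^\infty$ estimate — is where the null condition is essential. Decomposing the nonlinear coefficient along the light cone, the ``purely bad'' part $G^{\alpha\beta\gamma}\hat\omega_\alpha\hat\omega_\beta\hat\omega_\gamma(\partial_t\,\cdot\,)(\partial_t^2\,\cdot\,)$ vanishes by $(1.2)$, so every surviving term contains at least one good derivative $\bar\partial$, which is worth an extra factor $(1+s+|x|)^{-1}$ at the cost of no additional ordinary derivative (up to commutators, and a weighted $L^2$ estimate of Klainerman--Sideris type for the bad second derivatives). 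This makes the borderline contribution time-integrable as well, which is what keeps $W_4(u(t))$ bounded while the slow growth is confined to $W_5(u(t))$. Taking $\varepsilon$ and $A$ small recovers the factors $2$, the continuity argument closes, $T_*=\infty$, and $(1.9)$ holds; uniqueness is the standard energy estimate for the difference of two solutions.

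I expect the main obstacle to be exactly the quasi-linear character of the equation: the coefficient $G^{\alpha\beta\gamma}\partial_\gamma u$ multiplying $\partial_{\alpha\beta}^2 v_a$ forces one derivative into the nonlinear hierarchy and, through the Gronwall term $\int\|\partial^2 u\|_{L^\infty}$, an unavoidable growth $(1+t)^{C\varepsilon}$ of the top-order norm; reconciling this with the requirement that the lower-order norm $W_4$ stay bounded (not merely grow like $(1+t)^{C\varepsilon}$) is what makes the careful bookkeeping of how many vector fields land on each factor of $F_a$, together with the light-cone gain furnished by $(1.2)$, genuinely necessary rather than cosmetic. A secondary point requiring care is to verify that the commutation relations (2.1)--(2.2) really do deliver $F_a$ in the stated form with the null structure intact.
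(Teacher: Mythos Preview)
Your overall two-norm bootstrap strategy is correct and matches the paper's one-sentence sketch (let $W_5$ grow slowly, keep $W_4$ uniformly bounded via the estimation lemma, Lemma~2.3). But there is a genuine gap in how you obtain the \emph{uniform} bound on $W_4$. You write a single energy inequality valid for all $|a|\le 4$, with the diagonal quasilinear term $G^{\alpha\beta\gamma}(\partial_\gamma u)\partial_{\alpha\beta}^2 Z^a u$ kept on the left, and then invoke the crude Gronwall factor $\int_0^t\|\partial^2 u(s)\|_{L^\infty}\,ds\le CA\log(1+t)$ from H\"ormander's (6.3.6). That factor is the \emph{same} for $|a|\le 3$ as for $|a|=4$: it forces $E_1^{1/2}(Z^au(t))\le C(1+t)^{CA}$ at every level, so $W_4$ would grow exactly like $W_5$. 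Your null-condition discussion treats only the commutator source terms $F_a$ (those with $|b|\ge 1$); it never touches the Gronwall term, which comes from the coefficient you deliberately left on the left-hand side and which the general inequality (6.3.6) does not decompose along the light cone.

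The standard fix---and this is what the paper's reference to Lemma~2.3 is pointing at---is to treat the two levels differently. At top order $|a|=4$ you \emph{must} keep the diagonal term on the left (otherwise $\|\partial^2 Z^a u\|_{L^2}$ would require $W_6$), and you accept the $(1+t)^{C\varepsilon}$ growth. For $|a|\le 3$, however, you write $\square Z^a u=\tilde F_a$ with the diagonal term $G^{\alpha\beta\gamma}(\partial_\gamma u)(\partial_{\alpha\beta}^2 Z^a u)$ \emph{included} in $\tilde F_a$; this is now legitimate because $\|\partial^2 Z^a u\|_{L^2}\le W_5$. Every term of $\tilde F_a$ is a genuine null form, so Lemma~2.3 supplies an extra factor $(1+s)^{-1}$ on top of the Klainerman--Sobolev decay, and $\|\tilde F_a(s)\|_{L^2}$ becomes time-integrable even though $W_5$ grows like $(1+s)^{C\varepsilon}$. (The cost of Lemma~2.3 is one extra $Z$, which is exactly why $W_5$, not $W_4$, appears on the right of the $W_4$ estimate.) This, rather than the single ``borderline'' commutator term you isolate, is the mechanism that keeps $W_4$ bounded while the growth is confined to $W_5$, and it is the reason the smallness hypothesis (1.8) involves $W_5(0)$ and not just $W_4(0)$.
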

The proof of Proposition 1.3 uses the two semi-norms, 
allowing the higher-order one 
$W_5(u(t))$ to grow slowly over time, 
and bounding the lower-order one $W_4(u(t))$ 
uniformly in time with the help of 
the estimation lemma (see Lemma 2.3 below). 
This is why we need the size condition (1.8), which is obviously 
more restrictive than (1.4). 
On the other hand, the constant $\varepsilon$ of Proposition 1.3 
is independent of $R_*$ (see (1.3)). 
In this regard, Proposition 1.3 has an advantage over Theorem 1.1. 

The purpose of this paper is twofold. 
Firstly, we aim at refining Theorem 1.1 by 
showing that 
it is in fact possible to choose the constant $\varepsilon$ 
independently of $R_*$. 
Secondly, we intend to generalize Theorem 1.1 
to the result for a class of diagonal systems 
of quasi-linear wave equations, such as 
\begin{equation}
\begin{cases}
\displaystyle{
\partial_t^2 u_1
-
\Delta u_1
+
G_1^{11,\alpha\beta\gamma}
(\partial_\gamma u_1)
(\partial_{\alpha\beta}^2 u_1)
+
G_1^{21,\alpha\beta\gamma}
(\partial_\gamma u_2)
(\partial_{\alpha\beta}^2 u_1)
             }\\
\hspace{0.2cm}
\displaystyle{
+
H_1^{11,\alpha\beta}
(\partial_\alpha u_1)
(\partial_\beta u_1)
+
H_1^{12,\alpha\beta}
(\partial_\alpha u_1)
(\partial_\beta u_2)
+
H_1^{22,\alpha\beta}
(\partial_\alpha u_2)
(\partial_\beta u_2)
=0
               },\\
\displaystyle{
\partial_t^2 u_2
-
\Delta u_2
+
G_2^{12,\alpha\beta\gamma}
(\partial_\gamma u_1)
(\partial_{\alpha\beta}^2 u_2)
+
G_2^{22,\alpha\beta\gamma}
(\partial_\gamma u_2)
(\partial_{\alpha\beta}^2 u_2)
                }\\
\hspace{0.2cm}
\displaystyle{
+
H_2^{12,\alpha\beta}
(\partial_\alpha u_1)
(\partial_\beta u_2)
+
H_2^{11,\alpha\beta}
(\partial_\alpha u_1)
(\partial_\beta u_1)
+
H_2^{22,\alpha\beta}
(\partial_\alpha u_2)
(\partial_\beta u_2)
=0.
               }
\end{cases}
\end{equation}
Supposing the null condition 
for all the coefficients of the first equation,  
and supposing the null condition only on 
$\{G_2^{22,\alpha\beta\gamma}\}$ and 
$\{H_2^{22,\alpha\beta}\}$ 
as for the coefficients of the second equation, 
we prove:

\begin{theorem}
Suppose the symmetry condition: there hold 
$G_1^{11,\alpha\beta\gamma}=G_1^{11,\beta\alpha\gamma}$, 
$G_1^{21,\alpha\beta\gamma}=G_1^{21,\beta\alpha\gamma}$, 
and 
$G_2^{12,\alpha\beta\gamma}=G_2^{12,\beta\alpha\gamma}$, 
$G_2^{22,\alpha\beta\gamma}=G_2^{22,\beta\alpha\gamma}$ 
for all $\alpha,\beta,\gamma=0,\dots,3$. 
Also, suppose 
\begin{align}
&
G_1^{11,\alpha\beta\gamma}
X_\alpha X_\beta X_\gamma
=
G_1^{21,\alpha\beta\gamma}
X_\alpha X_\beta X_\gamma
=
G_2^{22,\alpha\beta\gamma}
X_\alpha X_\beta X_\gamma
=0,\\
&
H_1^{11,\alpha\beta}X_\alpha X_\beta
=
H_1^{12,\alpha\beta}X_\alpha X_\beta
=
H_1^{22,\alpha\beta}X_\alpha X_\beta
=
H_2^{22,\alpha\beta}X_\alpha X_\beta
=0
\end{align}
for any $X=(X_0,\dots,X_3)\in{\mathbb R}^4$ 
with 
$X_0^2=X_1^2+X_2^2+X_3^2$. 
Let 
$0<\eta<1/6$, 
$0<\delta<1/6$ so that 
$\eta+2\delta<1/2$. 
Then, there exist constants 
$C>0$, $0<\varepsilon<1$ 
depending only on 
the coefficients of the system $(1.10)$, 
$\delta$, and $\eta$ 
such that 
if compactly supported smooth data satisfy 
$W_4(u_1(0))+W_4(u_2(0))<\varepsilon$, 
then the Cauchy problem for $(1.10)$ 
admits a unique global smooth solution 
$(u_1(t,x),u_2(t,x))$ satisfying for all 
$t>0$, $T>0$
\begin{align}
&
W_4(u_1(t))
+
(1+t)^{-\delta}
W_4(u_2(t))\\
&
\hspace{0.4cm}
+
\sum_{i=1}^3
\sum_{|a|\leq 3}
\biggl(
\|
\langle t-r\rangle^{-(1/2)-\eta}
T_i Z^a u_1
\|_{L^2((0,\infty)\times{\mathbb R}^3)}\nonumber\\
&
\hspace{2.5cm}
+
(1+T)^{-\delta}
\|
\langle t-r\rangle^{-(1/2)-\eta}
T_i Z^a u_2
\|_{L^2((0,T)\times{\mathbb R}^3)}
\biggr)\nonumber\\
&
\hspace{0.2cm}
\leq
C\bigl(
W_4(u_1(0))+W_4(u_2(0))
\bigr).\nonumber
\end{align}
Here $T_i=\partial_i+(x_i/|x|)\partial_t$. 
\end{theorem}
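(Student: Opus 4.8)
The plan is to run a bootstrap (continuity) argument on a weighted energy functional that mimics Alinhac's ghost-weight method, adapted to the diagonal two-component system. First I would fix $T>0$ and define the ghost weight $q(t-r)$ with $q'(\sigma)=\langle\sigma\rangle^{-1-2\eta}$ (so $q$ is bounded, since $\eta>0$), and introduce the weighted energies $\mathcal E_1[v](t):=\int_{\mathbb R^3}e^{q(t-r)}\big((\partial_t v)^2+|\nabla v|^2\big)\,dx$ summed over $v=Z^a u_1$, $|a|\le3$, and similarly $\mathcal E_2[v](t)$ for $v=Z^a u_2$, $|a|\le3$. The bootstrap assumption is that the full left-hand side of (1.15) is bounded by $2C_0\varepsilon$ on $[0,T]$ for a constant $C_0$ to be chosen; I then aim to improve this to $\le C_0\varepsilon$ provided $\varepsilon$ is small. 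The energy identity for $\partial_t^2 v-\Delta v=F$ against the multiplier $e^{q(t-r)}\partial_t v$ produces, besides $\frac{d}{dt}\mathcal E_1[v]$, the crucial nonnegative spacetime term $\int e^{q}\,q'(t-r)\sum_i|T_i v|^2\,dx$ (using $T_i=\partial_i+\omega_i\partial_t$), which is exactly what controls the tangential-derivative $L^2$ norms appearing in (1.15); this is the mechanism by which the ghost weight pays for nonlinear interactions.

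Next I would commute $Z^a$ through the quasi-linear system using the good commutation relations (2.1)--(2.2) promised in the text, so that $Z^a u_1$ and $Z^a u_2$ solve perturbed wave equations whose right-hand sides are sums of (i) genuine null-form quadratic terms, (ii) for the $u_2$-equation only, the non-null quadratic terms $G_2^{12,\alpha\beta\gamma}(\partial_\gamma u_1)(\partial^2_{\alpha\beta}u_2)$ and $H_2^{12,\alpha\beta}(\partial_\alpha u_1)(\partial_\beta u_2)$, and (iii) commutator terms, all with coefficients that are themselves derivatives of $u_1,u_2$. The key structural observation making the weak null condition work is that every dangerous term in the $u_2$-equation carries a factor $\partial u_1$ (or $\partial^2 u_1$, rewritten via the $u_1$-equation and vector fields), and $u_1$ satisfies a fully null system, hence enjoys the \emph{uniform} bound $W_4(u_1(t))\le C\varepsilon$ together with the full ghost-weight tangential control — with no growth. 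Thus in the $u_2$ energy estimate the bad terms are estimated by putting the $\partial u_1$ factor in $L^\infty_x$ via Klainerman--Sobolev (Lemma 2.4), which gives decay $\lesssim \varepsilon\langle t+r\rangle^{-1}$ on a $u_1$-quantity with at most $3$ vector fields — here I'd need to be slightly careful and use the estimation/Hardy-type Lemma 2.3 to trade the $\langle t-r\rangle$ weights, since away from the light cone one uses $\partial u_1\lesssim \varepsilon\langle t+r\rangle^{-1}$ and near it one uses the improved $T$-derivative decay and the ghost term. The resulting differential inequality for $\mathcal E_2$ reads roughly $\frac{d}{dt}\mathcal E_2(t)+(\text{good }T\text{-term})\lesssim \varepsilon\langle t\rangle^{-1}\mathcal E_2(t)+(\text{null-form terms absorbable by Alinhac's }L^2\text{ bilinear trick})$, whose Grönwall integration yields $\mathcal E_2(t)\lesssim\varepsilon^2\langle t\rangle^{C\varepsilon}$, and choosing $\varepsilon$ small enough that $C\varepsilon<\delta$ closes the $(1+t)^{-\delta}$-weighted bound.

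For the $u_1$-component, since (1.12)--(1.13) impose the full null condition on \emph{all} coefficients of the first equation (including the coupling coefficients $G_1^{21},H_1^{12},H_1^{22}$ that involve $u_2$), I would run exactly the refined Alinhac argument: the ghost weight converts each null quadratic form into something controlled by the good tangential spacetime term times an $L^\infty$ factor that is itself a $\partial$-derivative of one of the unknowns. The terms containing $\partial u_2$ or $\partial^2 u_2$ are handled by noting that $\partial u_2$ appearing as the $L^\infty$ coefficient still decays like $\langle t+r\rangle^{-1}\langle t-r\rangle^{\delta}$ (a slow loss from the $u_2$ growth), but because the companion factor is a genuine null form acting on $u_1$, Alinhac's pointwise identity $|Q(\partial v,\partial w)|\lesssim |Tv||\partial w|+|\partial v||Tw|$ lets one always assign a $T$-derivative to the well-behaved $u_1$ factor and absorb the $\langle t-r\rangle^\delta$ against $q'$'s integrable weight $\langle t-r\rangle^{-1-2\eta}$ precisely because $\delta<1/6$ and $\eta<1/6$ with $\eta+2\delta<1/2$ guarantee the exponents balance; this is where the hypotheses on $\delta,\eta$ get used. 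The main obstacle — and the heart of the paper — is exactly this last balancing: showing that the slowly-growing $u_2$-factors can be reabsorbed into the $u_1$ ghost-weight estimate \emph{without} any time growth on $W_4(u_1)$, i.e.\ proving the extended ghost-weight energy inequality for the coupled system in which one component is allowed to grow and the other is not. Everything else (local existence, continuation, Klainerman--Sobolev, commutators) is standard and invoked from Section 2; the delicate point is the simultaneous closure of the two coupled differential inequalities with the sharp weight bookkeeping, and I would organize the write-up so that the coupled ghost-weight estimate is isolated as a single proposition and then fed into a two-tier (lower-order uniform / higher-order $(1+t)^\delta$) bootstrap.
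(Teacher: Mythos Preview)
Your overall architecture---ghost-weight energies, commute $Z^a$, two-tier bootstrap---matches the paper, and your Gr\"onwall route for $u_2$ (yielding $(1+t)^{C\varepsilon}$ growth with $C\varepsilon<\delta$) is a legitimate variant of the paper's Proposition~3.2, which instead builds the $(1+t)^{-\delta}$ weight directly into the energy inequality so that the bad $(1+t)^{-1}$ factor from the non-null terms becomes $(1+t)^{-1+2\delta}$ after division; either closes.

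There is, however, a genuine gap in your treatment of the $u_1$ equation. Your assertion that the null-form identity ``lets one always assign a $T$-derivative to the well-behaved $u_1$ factor'' is false for the term $H_1^{22,\alpha\beta}(\partial_\alpha u_2)(\partial_\beta u_2)$, which contains no $u_1$ factor at all. After commuting to top order $|b|=3$ and applying (2.6) in the exterior region $|x|>t/2+1$, one is forced to control $\|\langle t-r\rangle^{-1/2-\eta}TZ^b u_2\|_{L^2_x}$ multiplied by $(1+t)^{-1+\eta+\delta}$, and the ghost-weight spacetime bound on $TZ^b u_2$ itself carries the same $(1+t)^\delta$ growth as $W_4(u_2)$. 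The paper closes this by a dyadic decomposition of the time integral (see (3.66)), which sums precisely when $-\tfrac12+\eta+2\delta<0$; \emph{this}, not any absorption of a $\langle t-r\rangle^\delta$ weight into $q'$, is where the hypothesis $\eta+2\delta<\tfrac12$ enters. Your sentence about ``absorbing $\langle t-r\rangle^\delta$ against $q'$'s integrable weight'' conflates the growth in $t$ with a light-cone weight and would not close as written. A second omission: the paper's stated refinement over Alinhac is that $\varepsilon$ is independent of the support radius $R_*$, achieved by replacing the support-dependent inequality (1.25) with the trace-type inequality of Lemma~2.5 when bounding quantities such as $|T\psi|$ and $|Tu_2|$ in the exterior region (see (3.36)--(3.37), (3.65)); your outline, relying purely on Klainerman--Sobolev, does not address this and would inherit the $R_*$-dependence.
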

Here, and in the following as well, 
we use the standard notation 
$\langle p\rangle=\sqrt{1+p^2}$. 
Note that 
by choosing the trivial data 
$u_2(0,x)=\partial_t u_2(0,x)=0$ 
and assuming $H^{11,\alpha\beta}_2=0$ for 
all $\alpha,\beta$, and thus considering the trivial solution 
$u_2(t,x)\equiv 0$, 
we can go back to the wave equation for the scalar unknowns 
with the nonlinear terms more general than those of (1.1)
\begin{equation}
\partial_t^2 u-\Delta u
+
G^{\alpha\beta\gamma}
(\partial_\gamma u)
(\partial_{\alpha\beta}^2 u)
+
H^{\alpha\beta}
(\partial_\alpha u)
(\partial_\beta u)
=0,
\,\,t>0,\,x\in{\mathbb R}^3
\end{equation}
and thus obtain: 
\begin{theorem}
Suppose the symmetry condition 
$G^{\alpha\beta\gamma}=G^{\beta\alpha\gamma}$. 
Also, suppose the null condition: there holds 
\begin{equation}
G^{\alpha\beta\gamma}X_\alpha X_\beta X_\gamma
=
H^{\alpha\beta}X_\alpha X_\beta=0
\end{equation}
for any $X=(X_0,\dots,X_3)\in{\mathbb R}^4$ 
with $X_0^2=X_1^2+X_2^2+X_3^2$.  
Let $0<\eta<1/6$ be fixed. 
Then, there exist constants 
$C>0$, $0<\varepsilon<1$ depending only on $\eta$, 
$\{G^{\alpha\beta\gamma}\}$ and 
$\{H^{\alpha\beta}\}$ 
such that 
if compactly supported smooth data satisfy 
\begin{equation}
W_4(u(0))\leq\varepsilon,
\end{equation}
then 
the Cauchy problem $(1.14)$ admits 
a unique global, smooth solution $u(t,x)$ 
satisfying 
\begin{equation}
\sup_{t>0}W_4(u(t))
+
\sum_{i=1}^3
\sum_{|a|\leq 3}
\|
\langle t-r\rangle^{-(1/2)-\eta}
T_i Z^a u
\|_{L^2((0,\infty)\times{\mathbb R}^3)}
\leq
CW_4(u(0)).
\end{equation}
Moreover, the estimate
\begin{equation}
\sup_{t>0}W_5(u(t))
\leq
CW_5(u(0))
\end{equation}
also holds.
\end{theorem}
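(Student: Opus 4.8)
The plan is to derive Theorem 1.6 as a special case of Theorem 1.5 together with one extra estimate, exactly as the paragraph preceding Theorem 1.6 suggests. First I would observe that choosing $u_2(0,x)=\partial_t u_2(0,x)=0$ and setting $H_2^{11,\alpha\beta}=0$ for all $\alpha,\beta$, the second equation of (1.10) has $u_2\equiv 0$ as its unique solution (by the local existence/uniqueness theory, since the whole nonlinearity of the $u_2$-equation vanishes identically when $u_2\equiv 0$); substituting this back, the first equation of (1.10) reduces precisely to (1.14) after relabelling $G^{\alpha\beta\gamma}:=G_1^{11,\alpha\beta\gamma}$ and $H^{\alpha\beta}:=H_1^{11,\alpha\beta}$ (the terms with $G_1^{21}$, $H_1^{12}$, $H_1^{22}$ all carry a factor of $\partial u_2$ and hence drop out). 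The hypotheses of Theorem 1.5 that survive are exactly the symmetry condition $G^{\alpha\beta\gamma}=G^{\beta\alpha\gamma}$ and the null conditions (1.17); the remaining null conditions in (1.11)--(1.12) are vacuous since they only constrain coefficients that have been set to zero. Fixing $\delta$ to be, say, $\delta=\eta$ (any admissible value with $\eta+2\delta<1/2$ works), Theorem 1.5 then yields a unique global smooth solution $u=u_1$ with
\[
\sup_{t>0}W_4(u(t))
+
\sum_{i=1}^3\sum_{|a|\le 3}
\|\langle t-r\rangle^{-(1/2)-\eta}\,T_i Z^a u\|_{L^2((0,\infty)\times{\mathbb R}^3)}
\le C\bigl(W_4(u_1(0))+W_4(u_2(0))\bigr)=CW_4(u(0)),
\]
which is precisely (1.18), once we note $\varepsilon$ and $C$ now depend only on $\eta$ and the coefficients $\{G^{\alpha\beta\gamma}\},\{H^{\alpha\beta}\}$ (the dependence on $R_*$ having been eliminated — this is the first stated goal of the paper and is built into Theorem 1.5).

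It remains to establish the higher-order bound (1.19), $\sup_{t>0}W_5(u(t))\le CW_5(u(0))$, which is \emph{not} contained in Theorem 1.5 and must be proved separately. The approach here is the standard two-tier energy argument already outlined in the discussion of Proposition 1.3, but now exploiting the \emph{uniform} (in time) control of $W_4$ just obtained. Concretely: commute (1.14) with $Z^a$ for $|a|\le 4$, obtaining wave equations for $Z^a u$ with right-hand sides that are sums of products; applying the variable-coefficient energy inequality (6.3.6) of \cite{Hor} and using the good commutation relations (2.1)--(2.2), one gets
\[
\frac{d}{dt}W_5(u(t))^2 \le C\bigl(\|\partial Z^{\le4}u\|_{L^\infty}+\cdots\bigr)\,W_5(u(t))^2
\]
plus null-form terms. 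The null structure (1.17), handled via the estimation lemma (Lemma 2.3) and the Klainerman--Sobolev inequalities (Lemma 2.4), lets one replace the dangerous $L^\infty$ factors by quantities like $\langle t-r\rangle^{-1}\langle t\rangle^{-1}$ times lower-order norms, or by the good derivatives $T_i Z^a u$ whose space-time $L^2$ norm is already controlled by (1.18). One then shows the resulting coefficient is time-integrable: the contributions involving only $W_4$-level quantities give an $L^1_t$ bound of size $\lesssim W_4(u(0))\le\varepsilon$ by (1.18) and the pointwise decay, while the genuinely top-order terms are absorbed using a ghost-weight/Gronwall argument as in the main body of the paper. Integrating, $W_5(u(t))^2\le W_5(u(0))^2\exp(C\varepsilon)\le 2W_5(u(0))^2$ for $\varepsilon$ small, which is (1.19).

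The main obstacle is the top-order energy estimate for $W_5$. Unlike the situation in Proposition 1.3, where $W_5$ is merely allowed to grow like $(1+t)^{C\varepsilon}$, here we claim it stays \emph{bounded}; this requires that every term appearing in $\frac{d}{dt}W_5^2$ either carries a good-derivative factor (controlled in $L^2_{t,x}$ by (1.18)) or an integrable-in-time pointwise-decaying coefficient coming from the null condition, with \emph{no} logarithmically-divergent leftover. The delicate point is the interaction term where \emph{all} derivatives fall on the highest-order factor $Z^a u$ with $|a|=4$: one must verify that the ghost-weight energy inequality of the paper — which produces a positive space-time bulk term controlling exactly $\sum_i\|\langle t-r\rangle^{-(1/2)-\eta}T_iZ^au\|_{L^2}^2$ — is strong enough, when combined with the $L^\infty$ decay of lower-order terms guaranteed by the uniform $W_4$-bound, to close the estimate. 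Given that the quasi-linear coefficient is $\partial_\gamma u$ (an undifferentiated first-order quantity at top order) and the semilinear $H$-term is a pure null form, this should go through, but it is where the quantitative smallness of $\varepsilon$ (in terms of $\eta$ and the coefficients) is actually consumed, and it is the step I would write out in full detail.
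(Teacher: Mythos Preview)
Your reduction of the scalar problem to Theorem 1.4 by setting $u_2\equiv 0$ and $H_2^{11,\alpha\beta}=0$ is exactly what the paper does, and your derivation of the $W_4$ bound (1.17) from (1.13) is correct.

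For the $W_5$ bound, however, there is a genuine gap in the proposal as written. You begin by invoking H\"ormander's standard variable-coefficient energy inequality (6.3.6), obtaining a differential inequality of the form $\frac{d}{dt}W_5^2\le C(\|\partial^2 u\|_{L^\infty}+\cdots)W_5^2$. This is precisely the estimate underlying Proposition 1.3, and it \emph{cannot} yield uniform boundedness: the coefficient $\|\partial^2 u\|_{L^\infty}$ decays only like $(1+t)^{-1}$ near the light cone (even with the uniform $W_4$ control from (1.17)), which is not integrable, and the null condition on $\{G^{\alpha\beta\gamma}\}$ does \emph{not} improve this coefficient in the standard energy identity. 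Gronwall then gives only $W_5(u(t))\lesssim W_5(u(0))(1+t)^{C\varepsilon}$, exactly the second estimate in (1.9)---not (1.18). You also write that the top-order good derivatives $T_iZ^au$ are ``already controlled by'' the $W_4$ estimate, but that estimate stops at $|a|\le 3$; the $|a|=4$ case is genuinely new.

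The paper's Section 4 resolves both issues simultaneously by applying the ghost-weight estimate Proposition~3.1 (not H\"ormander's inequality) to $Z^a u$ for $|a|\le 4$, with $\varphi=u$, $\psi=0$; the hypothesis (3.10) is verified by the $W_4$ bound just obtained. The left side of (3.11) then controls both $\sup_t W_5$ \emph{and} $\sum_{|a|\le 4}\|\langle t-r\rangle^{-1/2-\eta}T_iZ^au\|_{L^2_{t,x}}$. The right side $\int_0^T\|P_1Z^au\|_{L^2}\,dt$ is estimated via the $B_t/B_t^c$ splitting and Lemmas 2.2--2.3, yielding (4.4): a Gronwall term with the \emph{integrable} weight $(1+\tau)^{-3/2}$, plus the top-order piece $C_4W_4(u(0))\sum_{|a|=4}\|\langle t-r\rangle^{-1/2-\eta}T_iZ^au\|_{L^2_{t,x}}$, which is absorbed into the left side since $C_4W_4(u(0))\le 1$. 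Gronwall then gives $\exp(CW_4(u(0)))\le C$, hence (1.18). Your final paragraph correctly identifies this absorption mechanism, so the intuition is there; the fix is to discard the appeal to (6.3.6) and run Proposition~3.1 at level $|a|\le 4$ from the start.
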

Theorem 1.5 significantly improves Theorem 1.1 
because the constant $\varepsilon$ no longer depends upon $R_*$. 
Also, we permit the presence of the ``semi-linear term'' 
$H^{\alpha\beta}(\partial_\alpha u)(\partial_\beta u)$. 
Theorem 1.5 improves Theorem 1.1 of Wang \cite{FangWang} too, 
where in the absence of such semi-linear terms, 
global existence of solutions to (1.1) and 
uniform (in time) boundedness of $W_5(u(t))$ 
was shown under the assumption that $W_5(u(0))\leq\varepsilon$. 
(Recall that in Theorem 1.5, we have only assumed $W_4(u(0))$ is small.) 
We remark that the constant $\varepsilon$ in Theorem 1.1 of \cite{FangWang} 
also depends upon $R_*$. 
In addition, we should mention the recent paper of Zha \cite{Zha}, which 
the authors became aware of while preparing the present manuscript. 
In \cite{Zha}, again in the absence of the semi-linear terms, 
global existence of solutions to (1.1) and 
uniform (in time) boundedness of $W_7(u(t))$ 
was shown under the assumption that $W_7(u(0))\leq\varepsilon$. 
(Strictly speaking, the definition of the ``generalized energy norm'' in \cite{Zha} 
is slightly different from that of ours.) 
We should note that the constant $\varepsilon$ 
in the theorem of Zha no longer depends upon $R_*$. 
Finally, we remark that the bound (1.18) tells us that 
the ``grow up'' (or ``blow up at $t=\infty$'') suggested by 
the second estimate in (1.9) in fact never occurs. 

The null condition was originally introduced 
by Christodoulou \cite{Ch} and Klainerman \cite{Kl86} 
independently, 
as the sufficient condition on the form of quadratic nonlinear terms 
under which 
the Cauchy problem for diagonal systems of quasi-linear wave equations 
with quadratic and fairly general higher-order nonlinear terms 
admits global solutions whenever initial data are smooth and small. 
It is worthwhile to remark that 
the system (1.10) does not satisfy the null condition 
but does satisfy the weak null condition 
which was introduced by Lindblad and Rodnianski \cite{LR}. 
Now let us recall the condition. 
We anticipate the asymptotic expansion 
\begin{align}
 u(t,x) \sim \frac{\varepsilon U(q, s, \omega)}{|x|}\\
 \intertext{as $|x| \to \infty$ and $|x| \sim t$, where}
  q = |x|-t,\quad s = \varepsilon \log |x|, \quad \omega = \frac{x}{|x|}.\nonumber
\end{align}
Then we obtain a system of equations for $U=(U_i)$, 
the {\itshape asymptotic system}, by substituting (1.19) 
into the original system and extracting the main terms. 
We say that a system of nonlinear wave equations satisfies 
the {\itshape weak null condition} 
if the following conditions are satisfied 
(see \cite{LR} for details):
\begin{itemize}
\item The corresponding asymptotic system admits a global solution for all initial data at $s=0$ decaying sufficiently fast in $q$.
\item The global solution grows in $s$ at most exponentially together with its derivatives. 
\end{itemize}
In the case of (1.10), we see that the asymptotic system is
\begin{align}
 &2\partial_s \partial_q U_1 =0,\\
 & 2\partial_s \partial_q U_2 
 = A^{12}_{2, 12}(\omega)(\partial_q U_1)(\partial_q^2 U_2)
 + A^{12}_{2, 11}(\omega)(\partial_q U_1)(\partial_q U_2)
 + A^{11}_{2,11}(\omega)(\partial_q U_1)^2,\\
\intertext{where}
 &A^{12}_{2, 12}(\omega) 
 = G^{12, \alpha \beta \gamma}_2 
 \widehat{\omega}_\alpha\widehat{\omega}_\beta\widehat{\omega}_\gamma,\,\,
A^{12}_{2, 11}(\omega)
=
H^{12, \alpha \beta}_{2} \widehat{\omega}_\alpha\widehat{\omega}_\beta,\,\,
A^{11}_{2,11}(\omega)
=
H^{11,\alpha \beta}_{2}
\widehat{\omega}_\alpha\widehat{\omega}_\beta,
\end{align}
and $\widehat{\omega}=(-1, \omega)$. 
Note that the quadratic terms such as 
$G^{11,\alpha \beta \gamma}_1(\partial_\gamma u_1)(\partial^2_{\alpha \beta}u_1)$,
$\dots$, 
$H^{22, \alpha \beta}_2(\partial_\alpha u_2)(\partial_\beta u_2)$ 
whose coefficients satisfy the null condition (1.11), (1.12) 
are negligible compared to the main terms. 
Note also the presence of the terms 
$A^{12}_{2,12}(\omega)(\partial_q U_1)(\partial^2_q U_2)$, 
$A^{12}_{2,11}(\omega)(\partial_q U_1)(\partial_q U_2)$ in 
(1.21), 
which shows a feature of our study. 
The asymptotic system of the type 
\begin{align*}
 2\partial_s \partial_q U_1 =0,\quad 2\partial_s \partial_q U_2 = (\partial_q U_1)^2
\end{align*}
has already appeared in the study of 
the Einstein vacuum equations in harmonic coordinates 
(see \cite{LR}). 
As far as the present authors know, 
global existence for (1.10) with small initial data 
whose corresponding asymptotic system is (1.20)--(1.21) 
has not been proved until now. 
Actually, it is not difficult to find a solution of (1.20)--(1.21) 
obeying 
\begin{align}
 U_1\vert_{s=0} = F_1,\quad U_2\vert_{s=0} = F_2
\end{align}
and growing in $s$ at most exponentially. 
Indeed, we get from (1.20) 
a solution of the form $U_1=F_1(q, \omega)$, 
which is independent of $s$. 
Substituting this into (1.21), 
we have
\begin{align}
&
 2\partial_s \partial_q U_2
  - 
  A^{12}_{2, 12}(\omega)\partial_q F_1(q, \omega) \partial_q^2 U_2\\
&  
\hspace{0.2cm}
 = 
 A^{12}_{2, 11}(\omega)(\partial_q F_1(q, \omega))(\partial_q U_2)
 +
 A^{11}_{2,11}(\omega)\left(\partial_q F_1(q, \omega)\right)^2.\nonumber
\end{align}
This can be regarded as 
a linear first order partial differential equation for $\partial_q U_2$. 
By the standard argument (see, e.g., page 13 of \cite{Hor}), 
we can find a global solution growing in $s$ at most exponentially, 
which means that the system (1.10) satisfies the weak null condition. 
Since the weak null condition 
strongly raises the possibility of 
global existence for 
the original system, 
it is very important to investigate whether 
there exist global solutions to (1.10) for small, smooth data. 
To the best of the present authors' knowledge, 
there exist a few works verifying the prediction that 
the system violating the null condition but satisfying the weak null condition 
actually admits global solutions for small, smooth data. 
See \cite{Lind1992}, \cite{Al2003}, 
\cite{Al2006}, \cite{LR2005}, \cite{Lind2008}, \cite{DLY}, \cite{KMS}, 
and \cite{LT}. 
In this paper, we are going to prove global existence for (1.10). 

The standard argument of showing global existence results such as 
Proposition 1.3 requires that a lower-order energy remain 
small as $t\to\infty$ (see (1.9)). 
Since $W_4(u_2(t))$ may possibly grow as $t\to\infty$ 
(see (1.13)), 
the standard argument does not apply to 
the proof of global existence for the system (1.10). 
We instead employ the ghost weight technique of showing 
Theorem 1.1 above (see Chapter 9 of \cite{Al2010}). 
Naturally, some modifications are necessary 
so as to choose $\varepsilon$ 
independently of $R_*$ 
and also to study the system (1.10). 
The constant $\varepsilon$ of Theorem 1.1 depends upon $R_*$ 
because on pages 94 and 95 Alinhac uses the inequality
\begin{equation}
(1+t)(1+|t-r|)^{-1/2}|\phi(t,x)|
\leq
C\sum_{|a|\leq 2}
\|\partial_x Z^a\phi(t,\cdot)\|_{L^2({\mathbb R}^3)}
\end{equation}
for smooth functions 
$\phi(t,x)$ with 
${\rm supp}\,\phi(t,\cdot)\subset\{x\in{\mathbb R}^3:|x|<t+R\}$ 
for some $R>0$, 
where the constant $C$ on the right-hand side of (1.25) 
does depend upon on $R$. 
We avoid using (1.25) 
and instead employ the trace-type inequality (see Lemma 2.5 below), 
which plays a key role in 
estimating the nonlinear terms on the region 
$\{x\in{\mathbb R}^3:|x|>(t/2)+1\}$ 
differently from how Alinhac did with the use of (1.25). 
For completeness, in Appendix we prove the inequality (1.25) 
in general space dimensions. 

In order to show global existence of solutions to the system (1.10), 
we need the two Alinhac-type energy estimates; 
one is for the hyperbolic operator 
\begin{equation}
\partial_t^2-\Delta
+
g_1^{\alpha\beta\gamma}(\partial_\gamma w(t,x))
\partial_{\alpha\beta}^2
\end{equation} 
with the coefficients $\{g_1^{\alpha\beta\gamma}\}$ satisfying the null condition 
(see (3.7) below, where we will actually 
consider a little more general operator), 
and the other is for the operator 
\begin{equation}
\partial_t^2-\Delta
+
g_2^{\alpha\beta\gamma}(\partial_\gamma v(t,x))
\partial_{\alpha\beta}^2
+
g_3^{\alpha\beta\gamma}(\partial_\gamma w(t,x))
\partial_{\alpha\beta}^2
\end{equation}
with 
the coefficients $\{g_3^{\alpha\beta\gamma}\}$ satisfying the null condition 
and with the coefficients $\{g_2^{\alpha\beta\gamma}\}$ failing in 
satisfying the null condition 
(see (3.8) below). 
Though in page 92 of \cite{Al2010} 
the remarkable energy inequality 
is stated for such hyperbolic operators as (1.26) 
under the assumption that 
the variable coefficient $w(t,x)$ satisfies 
\begin{equation}
\sum_{|a|\leq 3}
\|\partial Z^a w(t,\cdot)\|_{L^2({\mathbb R}^3)}
\leq
C_0\varepsilon,
\end{equation}
it is worthwhile noting that 
the method of Alinhac is in fact considerably robust and 
it equally works also for $w(t,x)$ behaving 
\begin{equation}
\sum_{|a|\leq 3}
\|\partial Z^a w(t,\cdot)\|_{L^2({\mathbb R}^3)}
\leq
C_0\varepsilon
(1+t)^\delta
\end{equation}
for some $0\leq\delta<1/2$. 
See Proposition 3.1 below. 
This key fact is very helpful in 
studying the system (1.10) 
whose solutions, in particular $u_2$, 
will be far from behaving like free solutions 
as $t\to\infty$. 
We also remark that 
even for (1.27), 
we can obtain a useful energy estimate 
by suitably modifying the proof of the lemma on page 92 of \cite{Al2010}. 
(See Proposition 3.2 below.) 
In this way, we generalize the method of Alinhac 
so as to prove the global existence for the system (1.10) 
satisfying the weak null condition. 

This paper is organized as follows. 
In the next section, key facts on the nonlinear terms 
satisfying the null condition are stated and 
useful inequalities such as 
the Sobolev type or the trace type are collected. 
In Section 3, in order to prove Theorem 1.4 
the energy estimate is carried out and 
the desired a priori estimate is obtained. 
In Section 4, we show (1.18) to complete the proof of 
Theorem 1.5. 
In Appendix, we prove (1.25).
\section{Preliminaries}
As explained in Section 1, 
in addition to the usual partial differential operators 
$\partial_0:=\partial/\partial t$ and 
$\partial_i:=\partial/\partial x_i$ 
$(i=1,2,3)$, 
we use $\Omega_{ij}:=x_i\partial_j-x_j\partial_i$, 
$L_k:=x_k\partial_0+t\partial_k$ 
$(1\leq i<j\leq 3,\,k=1,2,3)$ 
and $S=t\partial_0+x\cdot\nabla$. 
The set of these $11$ 
differential operators is denoted by 
$Z=\{Z_0,\dots,Z_{10}\}
=\{\partial_0,\partial_1,\dots,S\}$. 
For a multi-index $a=(a_0,\dots,a_{10})$, 
we set $Z^a:=Z_0^{a_0}\cdots Z_{10}^{a_{10}}$. 

We first remark several results concerning 
commutation relations. 
Let $[\cdot,\cdot]$ be the commutator: 
$[A,B]:=AB-BA$. 
It is easy to verify that 
\begin{eqnarray}
& &
[Z_i,\partial_t^2-\Delta]=0\,\,\,\mbox{for $i=0,\dots,9$},\,\,\,
[S,\partial_t^2-\Delta]=-2(\partial_t^2-\Delta),\\
& &
[Z_j,\partial_k]
=
\sum_{i=0}^3 C^{j,k}_i\partial_i,\,\,\,j=0,\dots,10,\,\,k=0,\dots,3.
\end{eqnarray}
Here $C^{j,k}_i$ denotes a constant depending on 
$i$, $j$, and $k$. 
These can be verified easily. 

The next lemma states that the null form is preserved 
under the differentiation. 
\begin{lemma}
Suppose that $\{G^{\alpha\beta\gamma}\}$ and 
$\{H^{\alpha\beta}\}$ satisfy the null condition 
$($see $(1.11)$, $(1.12)$ above$)$. 
For any $Z_i$ $(i=0,\dots,10)$, 
the equality 
\begin{align}
&
Z_i
G^{\alpha\beta\gamma}
(\partial_\gamma v)
(\partial_{\alpha\beta}^2 w)\\
&=
G^{\alpha\beta\gamma}
(\partial_\gamma Z_i v)
(\partial_{\alpha\beta}^2 w)
+
G^{\alpha\beta\gamma}
(\partial_\gamma v)
(\partial_{\alpha\beta}^2 Z_i w)
+
{\tilde G}_i^{\alpha\beta\gamma}
(\partial_\gamma v)
(\partial_{\alpha\beta}^2 w)\nonumber
\end{align}
holds 
with the new coefficients 
$\{{\tilde G}_i^{\alpha\beta\gamma}\}$ 
also satisfying the null condition. 
Also, the equality 
\begin{equation}
Z_i
H^{\alpha\beta}
(\partial_\alpha v)
(\partial_\beta w)
=
H^{\alpha\beta}
(\partial_\alpha Z_i v)
(\partial_\beta w)
+
H^{\alpha\beta}
(\partial_\alpha v)
(\partial_\beta Z_i w)
+
{\tilde H}_i^{\alpha\beta}
(\partial_\alpha v)
(\partial_\beta w)
\end{equation}
holds 
with the new coefficients 
$\{{\tilde H}_i^{\alpha\beta}\}$ 
also satisfying the null condition. 
\end{lemma}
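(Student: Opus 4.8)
The plan is to expand $Z_i$ by the Leibniz rule, commute it past the derivatives with the help of $(2.2)$, and then recognize the leftover ``correction'' coefficients as the infinitesimal effect of a Lorentz transformation or dilation on the null form, which is again a null form.

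\emph{Step 1: expansion and commutation.} Since each $Z_i$ is a first-order operator, I would first write, by the derivation property,
\[
Z_i\bigl(G^{\alpha\beta\gamma}(\partial_\gamma v)(\partial_{\alpha\beta}^2 w)\bigr)
= G^{\alpha\beta\gamma}(Z_i\partial_\gamma v)(\partial_{\alpha\beta}^2 w)
+ G^{\alpha\beta\gamma}(\partial_\gamma v)(Z_i\partial_{\alpha\beta}^2 w),
\]
and then push $Z_i$ past the derivatives by $(2.2)$, namely $Z_i\partial_\gamma = \partial_\gamma Z_i + \sum_l C^{i,\gamma}_l\partial_l$ and, accordingly, $Z_i\partial_{\alpha\beta}^2 = \partial_{\alpha\beta}^2 Z_i + \sum_l C^{i,\alpha}_l\partial_{l\beta}^2 + \sum_l C^{i,\beta}_l\partial_{\alpha l}^2$. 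After relabeling summation indices this gives exactly the two transported terms in $(2.3)$ plus a remainder $\tilde G_i^{\alpha\beta\gamma}(\partial_\gamma v)(\partial_{\alpha\beta}^2 w)$ with
\[
\tilde G_i^{\alpha\beta\gamma}
= \sum_{\gamma'}C^{i,\gamma'}_\gamma\,G^{\alpha\beta\gamma'}
+ \sum_{\alpha'}C^{i,\alpha'}_\alpha\,G^{\alpha'\beta\gamma}
+ \sum_{\beta'}C^{i,\beta'}_\beta\,G^{\alpha\beta'\gamma},
\]
the symmetry $\tilde G_i^{\alpha\beta\gamma}=\tilde G_i^{\beta\alpha\gamma}$ being inherited from that of $G$. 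Identity $(2.4)$ comes out in the same way, with $\tilde H_i^{\alpha\beta}=\sum_{\alpha'}C^{i,\alpha'}_\alpha H^{\alpha'\beta}+\sum_{\beta'}C^{i,\beta'}_\beta H^{\alpha\beta'}$.

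\emph{Step 2: the remainder is again a null form.} Fix $X=(X_0,\dots,X_3)$ with $X_0^2=X_1^2+X_2^2+X_3^2$, and let $M_i$ be the matrix defined by $[Z_i,\partial_{\gamma'}]=\sum_\gamma (M_i)_{\gamma'\gamma}\partial_\gamma$, that is, $(M_i)_{\gamma'\gamma}=C^{i,\gamma'}_\gamma$. Contracting the formula for $\tilde G_i$ against $X_\alpha X_\beta X_\gamma$ and regrouping, one obtains
\[
\tilde G_i^{\alpha\beta\gamma}X_\alpha X_\beta X_\gamma
= \frac{d}{ds}\Big|_{s=0}\,G^{\alpha\beta\gamma}(e^{sM_i}X)_\alpha(e^{sM_i}X)_\beta(e^{sM_i}X)_\gamma,
\]
the right-hand side being the directional derivative of the cubic form $X\mapsto G^{\alpha\beta\gamma}X_\alpha X_\beta X_\gamma$ at the point $X$ in the direction $M_iX$. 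Reading $M_i$ off from $(2.2)$ case by case, $M_i=0$ for $Z_i=\partial_\mu$, $M_i$ is an infinitesimal spatial rotation for $Z_i=\Omega_{jk}$, an infinitesimal Lorentz boost for $Z_i=L_k$, and $M_i=-\mathrm{Id}$ for $Z_i=S$; in every case $e^{sM_i}$ is a Lorentz transformation composed with a positive dilation, hence maps the cone $\{X_0^2=X_1^2+X_2^2+X_3^2\}$ into itself. Since $\{G^{\alpha\beta\gamma}\}$ satisfies the null condition, $s\mapsto G^{\alpha\beta\gamma}(e^{sM_i}X)_\alpha(e^{sM_i}X)_\beta(e^{sM_i}X)_\gamma$ vanishes identically, so its derivative at $s=0$ vanishes and $\tilde G_i^{\alpha\beta\gamma}X_\alpha X_\beta X_\gamma=0$. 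The identical computation with the quadratic form $X\mapsto H^{\alpha\beta}X_\alpha X_\beta$ yields $\tilde H_i^{\alpha\beta}X_\alpha X_\beta=0$.

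I expect the only step demanding genuine care to be the case check that each $M_i$ belongs to $\mathfrak{so}(1,3)\oplus{\mathbb R}\,\mathrm{Id}$ --- equivalently, that $e^{sM_i}$ preserves the light cone $\{X_0^2=X_1^2+X_2^2+X_3^2\}$ --- which is immediate once the explicit commutators $[Z_i,\partial_\gamma]$ are written out. Everything else (the Leibniz expansion, the index relabeling, the chain rule in Step 2) is routine bookkeeping.
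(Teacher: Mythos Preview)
Your argument is correct. The paper itself does not give a proof but simply refers the reader to page 91 of Alinhac's book \cite{Al2010}; the computation there is essentially the one you have written out, namely the Leibniz expansion combined with the commutation relations $(2.2)$ and the observation that the resulting correction coefficients arise from the infinitesimal action of the Lorentz group (and dilations) on the null cone. Your Step~2, phrasing the verification as the $s$-derivative of $G^{\alpha\beta\gamma}(e^{sM_i}X)_\alpha(e^{sM_i}X)_\beta(e^{sM_i}X)_\gamma$ and noting that each $e^{sM_i}$ maps the cone to itself, is a clean and conceptually transparent way to organize the case check; it is equivalent to, and arguably tidier than, the alternative of writing an arbitrary null form as a linear combination of the basic forms $Q_0$ and $Q_{\alpha\beta}$ and checking the identity on that basis by hand. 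One small remark: when you record the commutator matrix for $L_k$ you should indeed find $(M_i)_{0k}=(M_i)_{k0}=-1$, so that $\eta M_i$ is antisymmetric for the Minkowski metric $\eta=\mathrm{diag}(1,-1,-1,-1)$; this is exactly the check that $M_i\in\mathfrak{so}(1,3)$ that you flagged as the one step requiring care, and it goes through without difficulty.
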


For the proof, see, e.g., page 91 of \cite{Al2010}.

It is possible to show the following lemma 
essentially in the same way as in pages 90--91 of \cite{Al2010}. 
(See also Lemma 2.3 of \cite{LNS}. 
The present authors are inspired by 
these arguments in \cite{Al2010} and \cite{LNS}.) 
Using (2.5) and (2.6), we will later exploit the fact that 
for local solutions $u$, 
the special derivatives $T_i u$ have 
better space-time $L^2$-integrability 
and improved time decay property of their $L^\infty({\mathbb R}^n)$-norms. 
\begin{lemma}Suppose that 
$\{G^{\alpha\beta\gamma}\}$, $\{H^{\alpha\beta}\}$ satisfy 
the null condition. Then, we have
\begin{align}
&
|
G^{\alpha\beta\gamma}
(\partial_\gamma v)
(\partial_{\alpha\beta}^2 w)
|
\leq
C
(
|T v|
|\partial^2 w|
+
|\partial v|
|T\partial w|
),\\
&
|
H^{\alpha\beta}
(\partial_\alpha v)
(\partial_\beta w)
|
\leq
C
(
|T v|
|\partial w|
+
|\partial v|
|T w|
).
\end{align}
\end{lemma}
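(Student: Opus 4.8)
The plan is to use the standard null-frame decomposition of the Cartesian derivatives, isolating the single ``bad'' derivative $\partial_t$, carried by the null covector $\widehat\omega=(-1,\omega)$ with $\omega=x/|x|$, from the ``good'' derivatives $T_i=\partial_i+\omega_i\partial_t$. First I would record two elementary identities. Since $\partial_i=T_i-\omega_i\partial_t$, $\partial_0=\partial_t$, and $\widehat\omega_0=-1$, $\widehat\omega_i=\omega_i$, one has for $\gamma=0,\dots,3$
\[
\partial_\gamma v=-\widehat\omega_\gamma\,\partial_t v+R_\gamma v,\qquad |R_\gamma v|\le C|Tv|,
\]
where $R_\gamma v$ is a linear combination of $T_1v,\dots,T_3v$ and $|Tv|:=\sum_{i=1}^{3}|T_iv|$. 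For second derivatives, using in addition that $\omega_i$ is independent of $t$, so that $[T_i,\partial_t]=0$, a short computation (for instance $\partial_i\partial_j w=T_i\partial_j w-\omega_i T_j\partial_t w+\omega_i\omega_j\partial_t^2 w$, and likewise $\partial_0\partial_i w=\widehat\omega_0\widehat\omega_i\partial_t^2 w+T_i\partial_t w$, $\partial_0\partial_0 w=\partial_t^2 w$) gives for all $\alpha,\beta$
\[
\partial^2_{\alpha\beta}w=\widehat\omega_\alpha\widehat\omega_\beta\,\partial_t^2 w+R_{\alpha\beta}w,\qquad |R_{\alpha\beta}w|\le C|T\partial w|,
\]
where $R_{\alpha\beta}w$ is a combination of the quantities $T_i\partial_\gamma w$ and $|T\partial w|:=\sum_{i=1}^{3}\sum_{\gamma=0}^{3}|T_i\partial_\gamma w|$; note also $|T\partial w|\le C|\partial^2 w|$.

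With these two identities, (2.5) follows by substitution: expanding the product $G^{\alpha\beta\gamma}(\partial_\gamma v)(\partial^2_{\alpha\beta}w)$, the only resulting term that contains neither a factor $Tv$ nor a factor $R_{\alpha\beta}w$ is $-G^{\alpha\beta\gamma}\widehat\omega_\alpha\widehat\omega_\beta\widehat\omega_\gamma(\partial_t v)(\partial_t^2 w)$, whose coefficient vanishes: since $\widehat\omega_0^2=|\omega|^2=\widehat\omega_1^2+\widehat\omega_2^2+\widehat\omega_3^2$, the covector $\widehat\omega$ is null, so the null condition (1.11) gives $G^{\alpha\beta\gamma}\widehat\omega_\alpha\widehat\omega_\beta\widehat\omega_\gamma=0$. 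Each of the remaining terms carries either a factor $|Tv|$, to be paired with $|\partial_t^2 w|\le|\partial^2 w|$, or a factor $|R_{\alpha\beta}w|\le C|T\partial w|$, to be paired with $|\partial_\gamma v|\le|\partial v|$ (or with $|Tv|\le C|\partial v|$); since $|\widehat\omega_\alpha|\le1$ the whole sum is bounded by $C(|Tv||\partial^2 w|+|\partial v||T\partial w|)$, which is the claim. Estimate (2.6) is obtained the same way, more simply: writing $\partial_\alpha v=-\widehat\omega_\alpha\partial_t v+R_\alpha v$ and $\partial_\beta w=-\widehat\omega_\beta\partial_t w+R_\beta w$, the only purely bad term is $H^{\alpha\beta}\widehat\omega_\alpha\widehat\omega_\beta(\partial_t v)(\partial_t w)$, which drops by (1.12), and the rest is $\le C(|Tv||\partial w|+|\partial v||Tw|)$.

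I expect the only genuine obstacle to be the second-derivative identity: one must verify that the remainder $R_{\alpha\beta}w$ is really of the form ``a good derivative $T_i$ applied to a first-order derivative $\partial_\gamma w$'', rather than merely $O(|\partial^2 w|)$. Without this refinement the factor $|\partial v||T\partial w|$ on the right-hand side of (2.5) — the one that supplies the extra space-time decay exploited in the energy estimates of Section~3 — would degenerate to $|\partial v||\partial^2 w|$ and the lemma would be useless. Everything else is routine algebra together with one application apiece of the null conditions (1.11) and (1.12).
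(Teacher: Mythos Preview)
Your proof is correct and follows essentially the same approach as the paper: both use the decomposition $\partial_\gamma=-\widehat\omega_\gamma\partial_t+T_\gamma$ (the paper writes $\omega_0=-1$, $T_0=0$, which amounts to the same thing) and iterate it to isolate the single term $G^{\alpha\beta\gamma}\widehat\omega_\alpha\widehat\omega_\beta\widehat\omega_\gamma(\partial_t v)(\partial_t^2 w)$, which vanishes by the null condition. The paper carries out the substitution sequentially in one displayed chain (its equation (2.8)), while you package the second-order step as $\partial^2_{\alpha\beta}w=\widehat\omega_\alpha\widehat\omega_\beta\partial_t^2 w+R_{\alpha\beta}w$ with $|R_{\alpha\beta}w|\le C|T\partial w|$; the computations are identical and your verification that $R_{\alpha\beta}w$ is genuinely of the form $T_i\partial_\gamma w$ is exactly what is needed.
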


Here, and in the following, we use the notation
\begin{equation}
|Tv|
:=
\biggl(
\sum_{k=1}^3
|T_k v|^2
\biggr)^{1/2},\quad
|T\partial v|
:=
\biggl(
\sum_{k=1}^3
\sum_{\gamma=0}^3
|T_k\partial_\gamma v|^2
\biggr)^{1/2}
\end{equation}
\begin{proof}
We may focus on (2.5), 
because the proof of (2.6) is similar. 
Using the representation 
$\partial_i=T_i-\omega_i\partial_t$ 
$(i=1,2,3,\,\omega_i=x_i/|x|)$ 
and 
setting $T_0=0$, $\omega_0=-1$, 
we have 
\begin{align}
&
G^{\alpha\beta\gamma}
(\partial_\gamma v)
(\partial_{\alpha\beta}^2 w)\\
&
\hspace{0.2cm}
=
G^{\alpha\beta\gamma}
(T_\gamma v)
(\partial_{\alpha\beta}^2 w)
-
G^{\alpha\beta\gamma}
\omega_\gamma
(\partial_t v)
(\partial_{\alpha\beta}^2 w)\nonumber\\
&
\hspace{0.2cm}
=
G^{\alpha\beta\gamma}
(T_\gamma v)
(\partial_{\alpha\beta}^2 w)
-
G^{\alpha\beta\gamma}
\omega_\gamma
(\partial_t v)
(T_\alpha\partial_\beta w)
+
G^{\alpha\beta\gamma}
\omega_\alpha\omega_\gamma
(\partial_t v)
(\partial_t\partial_\beta w)\nonumber\\
&
\hspace{0.2cm}
=
G^{\alpha\beta\gamma}
(T_\gamma v)
(\partial_{\alpha\beta}^2 w)
-
G^{\alpha\beta\gamma}
\omega_\gamma
(\partial_t v)
(T_\alpha\partial_\beta w)\nonumber\\
&
\hspace{0.56cm}
+
G^{\alpha\beta\gamma}
\omega_\alpha\omega_\gamma
(\partial_t v)
(T_\beta\partial_t w)
-
G^{\alpha\beta\gamma}
\omega_\alpha\omega_\beta\omega_\gamma
(\partial_t v)
(\partial_t^2 w).\nonumber
\end{align}
We find that owing to the null condition, 
the last term on the right-hand side above vanishes, 
which shows (2.5).
\end{proof}

The next lemma says that 
the null condition creates cancellation which 
allows us to handle the quadratic nonlinear terms 
as higher-order ones in terms of time decay. 
\begin{lemma}If $\{G^{\alpha\beta\gamma}\}$ satisfies the 
null condition, 
then 
\begin{equation}
|
G^{\alpha\beta\gamma}
(\partial_\gamma v)
(\partial_{\alpha\beta}^2 w)
|
\leq
C(1+t)^{-1}
\sum_{|a|=1}
\bigl(
|Z^a v|
|\partial^2 w|
+
|\partial v|
|\partial Z^a w|
\bigr)
\end{equation}
holds. Also, if $\{H^{\alpha\beta}\}$ satisfies the null condition, 
then 
\begin{equation}
|
H^{\alpha\beta}
(\partial_\alpha v)
(\partial_\beta w)
|
\leq
C(1+t)^{-1}
\sum_{|a|=1}
\bigl(
|Z^a v|
|\partial w|
+
|\partial v|
|Z^a w|
\bigr)
\end{equation}
holds.  
\end{lemma}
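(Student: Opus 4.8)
I would treat the inequality for $G$ in detail, the one for $H$ being analogous and in fact simpler. The plan is to combine the pointwise null-structure bound $(2.5)$ of the preceding lemma with a pointwise estimate showing that a tangential derivative $T_k$ carries an extra decay factor $(1+t)^{-1}$.

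First I would record the elementary fact that, for every smooth $\phi$ and every $k\in\{1,2,3\}$,
\[
(1+t)\,|T_k\phi|\le C\sum_{|a|=1}|Z^a\phi|.
\]
This follows from the algebraic identities $tT_k=L_k+\omega_k(t-r)\partial_t$ and $(t^2-r^2)\partial_t=tS-r\sum_j\omega_jL_j$, where $r=|x|$ and $\omega_k=x_k/|x|$: the second gives $|(t-r)\partial_t\phi|\le C\sum_{|a|=1}|Z^a\phi|$ on $\{|t-r|\ge1\}$, while on $\{|t-r|<1\}$ that bound is trivial since there $|(t-r)\partial_t\phi|\le|\partial_t\phi|$; so for $t\ge1$ the first identity gives $t\,|T_k\phi|\le|L_k\phi|+|(t-r)\partial_t\phi|\le C\sum_{|a|=1}|Z^a\phi|$, and for $t\le1$ the estimate is immediate from $|T_k\phi|\le|\partial_k\phi|+|\partial_t\phi|$.

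Then I would invoke $(2.5)$, which bounds $|G^{\alpha\beta\gamma}(\partial_\gamma v)(\partial_{\alpha\beta}^2 w)|$ by $C(|Tv|\,|\partial^2w|+|\partial v|\,|T\partial w|)$. The estimate above with $\phi=v$ disposes of the first summand, giving $|Tv|\,|\partial^2w|\le C(1+t)^{-1}(\sum_{|a|=1}|Z^av|)|\partial^2w|$, which is of the first type on the right of the assertion. For the second summand I would apply the estimate with $\phi=\partial_\gamma w$ to each entry of $T\partial w$, obtaining $|T_k\partial_\gamma w|\le C(1+t)^{-1}\sum_{|a|=1}|Z^a\partial_\gamma w|$, and then commute $Z^a$ past $\partial_\gamma$ by $(2.2)$, the commutator being first order with constant coefficients; this turns the bound into $C(1+t)^{-1}(\sum_{|a|=1}|\partial Z^aw|+|\partial w|)$, and it remains to absorb the lower-order remainder $(1+t)^{-1}|\partial v|\,|\partial w|$ into the right-hand side. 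The $H$-inequality would follow in the same way from $(2.6)$, i.e. from $|H^{\alpha\beta}(\partial_\alpha v)(\partial_\beta w)|\le C(|Tv|\,|\partial w|+|\partial v|\,|Tw|)$, with the simplification that the good-derivative estimate is now applied directly to $v$ and to $w$, so nothing is commuted past a derivative and no remainder appears.

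The hard part will therefore be the term $|\partial v|\,|T\partial w|$ in the $G$-case: passing from the natural bound $|T_k\partial_\gamma w|\le C(1+t)^{-1}\sum_{|a|=1}|Z^a\partial_\gamma w|$ to the precise form demanded by the statement requires handling the commutator $[Z^a,\partial_\gamma]w$ while retaining the full $(1+t)^{-1}$ gain, using also that the lemma is applied only to solutions supported where $|x|\le C(1+t)$. Everything else — the two algebraic identities, the split near and away from the light cone, and the whole $H$-inequality — should be routine.
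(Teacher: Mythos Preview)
Your approach is exactly the paper's: the authors' entire proof reads ``use (2.12) together with the commutation relation (2.2) to derive (2.9)--(2.10) from (2.5)--(2.6)''. Your derivation of the bound $(1+t)|T_k\phi|\le C\sum_{|a|=1}|Z^a\phi|$ from the identities (3.26)--(3.27) is also precisely what the paper has in mind.

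Where you go astray is in your assessment of the commutator remainder. You are right that after applying (2.12) to $\partial_\gamma w$ and commuting via (2.2) one picks up a term of size $(1+t)^{-1}|\partial v|\,|\partial w|$. But this is not ``the hard part'': it is a harmless lower-order contribution, and certainly does \emph{not} require any compact-support hypothesis to control (the lemma is a pointwise inequality and no support condition is assumed or used). The point is simply that the commutator $[Z^a,\partial_\gamma]$ produces constant-coefficient first-order terms $\sum_i c_i\partial_i w$, and since each $\partial_i$ is itself one of the $Z$'s, one has $|\partial w|\le\sum_{|a|=1}|Z^a w|$; so the remainder is of the same type as the terms already present, only with one fewer derivative on $w$. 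Strictly speaking the right-hand side of (2.9) as written (with $|a|=1$, hence every term second order in $w$) does not majorize this first-order remnant, so the honest statement should carry $\sum_{|a|\le 1}$ rather than $\sum_{|a|=1}$; this is a routine imprecision and the paper's two-line proof commits it equally. In every application in Section 3 the extra term $(1+t)^{-1}|\partial v|\,|\partial w|$ is trivially absorbed by the energy. Drop the appeal to finite propagation speed: it buys you nothing here and obscures the fact that the argument is purely algebraic.
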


Here, and in the following, 
we use the standard notation
\begin{equation}
|\partial v|
:=
\biggl(
\sum_{\gamma=0}^3
|\partial_\gamma v|^2
\biggr)^{1/2},
\quad
|\partial^2 v|
:=
\biggl(
\sum_{\alpha,\beta=0}^3
|\partial_{\alpha\beta}^2 v|^2
\biggr)^{1/2}.
\end{equation}
\begin{proof}
Using the fact
\begin{equation}
|Tv(t,x)|
\leq
C(1+t)^{-1}
\sum_{|a|=1}
|Z^a v(t,x)|
\end{equation}
(see page 91 of \cite{Al2010}, and see also (3.26) below) 
together with the commutation relation (2.2), 
we can derive (2.9)--(2.10) from (2.5)--(2.6).
\end{proof}

The following lemma is concerned with Sobolev-type 
inequalities. 
\begin{lemma}
For any smooth function 
$v(t,x)\in C^\infty((0,\infty)\times{\mathbb R}^3)$ 
decaying sufficiently fast as $|x|\to\infty$, 
the inequality 
\begin{equation}
(1+t+|x|)(1+|t-|x||)^{1/2}|v(t,x)|
\leq
C\sum_{|a|\leq 2}\|Z^a v(t,\cdot)\|_{L^2({\mathbb R}^3)}
\end{equation}
holds. 
Moreover, for any $p$ with $2\leq p\leq 6$ 
there exists a positive constant $C$ depending on $p$ 
such that the inequality
\begin{equation}
\|(1+t+|\cdot|)^{2((1/2)-(1/p))}(1+|t-|\cdot||)^{(1/2)-(1/p)}v(t,\cdot)\|
_{L^p({\mathbb R}^3)}
\leq
C\sum_{|a|\leq 1}\|Z^a v(t,\cdot)\|_{L^2({\mathbb R}^3)}
\end{equation}
holds. 
\end{lemma}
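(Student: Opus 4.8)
The plan is to prove Lemma 2.4 by the standard Klainerman--Sobolev machinery, adapted to keep track of both the $(1+t+|x|)$ and the $(1+|t-|x||)$ weights. First I would reduce the two inequalities to local statements on dyadic annuli. Fix $(t,x)$ and distinguish the two regimes $|x| \le (t+1)/2$ (or, symmetrically, near the light cone $|x| \sim t$) versus the interior/exterior regions; in each case the weight $(1+|t-|x||)$ is comparable to either $(1+t+|x|)$ or to a fixed power of the local length scale, so it suffices to prove a scaled Sobolev embedding on a cube of side $\sim \min(1+|t-|x||, 1+t+|x|)$. The point is that the rotation vector fields $\Omega_{ij}$ together with $\partial$ recover all angular derivatives with a gain of $|x|^{-1}$, while the Lorentz boosts $L_k$ and the scaling $S$ combine to give $(t+r)^{-1}$ or $(t-r)^{-1}$ times a derivative along the appropriate directions. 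Concretely, write $\partial_r = \omega \cdot \nabla$ and use $(t-r)\partial_r$ (resp.\ $(t+r)\partial_r$) as a linear combination of $S$ and $L = \omega_k L_k$, and use $r\partial_\omega$ as a combination of the $\Omega_{ij}$.

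Next I would carry out the standard argument: on the unit-scale problem, ordinary Sobolev embedding $H^2(\mathbb{R}^3) \hookrightarrow L^\infty$ and $H^1(\mathbb{R}^3)\hookrightarrow L^p$ for $2\le p\le 6$ (Gagliardo--Nirenberg) give the inequalities with no weights. Then I rescale: on an annulus $\{|x|\sim \rho,\ |t-|x||\sim \lambda\}$ of the relevant dyadic size, change variables $x = \rho\, y$ in the angular directions and $r = $ (center) $+ \lambda\, \sigma$ in the radial direction, so that the rescaled function lives on a fixed-size region. Each $y$-derivative that appears is $\rho^{-1}$ times an angular derivative of the original function, which by the chain of vector fields is $\rho^{-1}\cdot(\text{bounded combination of } \Omega^{a}v)$ --- note $\rho \le 1+t+|x|$ and $\rho^{-1}|\Omega v|$ is controlled; each $\sigma$-derivative is $\lambda^{-1}$ times $\partial_r v$, and since $\lambda = 1+|t-r|$ we have $\lambda\partial_r$ expressed through $S, L_k$ up to lower-order terms in $\partial v$ (the identities $S = t\partial_t + r\partial_r$, $\omega_k L_k = r\partial_t + t\partial_r$ invert to give $\partial_r$ with coefficients $t/(t^2-r^2)$ and $-r/(t^2-r^2)$, which on the annulus are $O(1/\lambda)$ after using $|t+r|\sim 1+t+|x|$). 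Collecting the Jacobian factor $\rho^2\lambda$ from $dx$ and the derivative losses, the rescaled $L^2$ norm of $v$ and its vector-field derivatives picks up exactly the claimed powers $(1+t+|x|)$ and $(1+|t-|x||)^{1/2}$ for the $L^\infty$ estimate, and the fractional powers $2(1/2-1/p)$, $(1/2-1/p)$ for the $L^p$ estimate (the exponent $2(1/2-1/p)$ reflects that the angular directions contribute the two-dimensional scaling $\rho^2$ while the radial direction contributes $\lambda$, and on the cone $\rho\sim\lambda$ is not assumed, so one keeps them separate).

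The main obstacle — and the place requiring genuine care rather than bookkeeping — is the region near the light cone where $|t-|x||$ is small but $t+|x|$ is large and $|x|$ itself may be large: there one cannot simultaneously use a small radial scale $\lambda$ and expect the angular/scaling vector fields to close, because $t^2 - r^2 = (t-r)(t+r) \sim \lambda(1+t+|x|)$ so $\partial_r$ as a combination of $S,L$ has coefficient $\sim 1/\lambda$ only if the $(t+r)$ factor is absorbed correctly. I would handle this by doing the rescaling in two stages — first rescale the angular variables by $\rho\sim 1+t+|x|$ using the $\Omega_{ij}$, then on the resulting $(1+1)$-dimensional problem in $(t,r)$ near the diagonal rescale by $\lambda$ using $S$ and $\omega_kL_k$ — and by checking that only $|a|\le 2$ (resp.\ $|a|\le 1$) vector fields are ever produced, since each stage costs at most one derivative in $r$ and the ordinary $1$D/$2$D Sobolev step costs the rest. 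Once the annular estimates are in hand, summing over the finitely many relevant dyadic annuli (for fixed $(t,x)$ only $O(1)$ of them matter) and taking $p$ in the stated range via interpolation or direct Gagliardo--Nirenberg completes the proof.
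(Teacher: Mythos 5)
The paper does not actually prove Lemma 2.4: its ``proof'' is a citation, (2.13) to Klainerman \cite{Kl87} and (2.14) to Ginibre--Velo \cite{GV}. What you propose is essentially a reconstruction of the classical argument behind those references: dyadic localization in $|x|$ and $|t-|x||$, anisotropic rescaling (angular scale $\rho\sim 1+t+|x|$ via the $\Omega_{ij}$, radial scale $\lambda\sim 1+|t-|x||$ via $S$ and $\omega_kL_k$, i.e.\ exactly the identities recorded as (3.26)--(3.27) in the paper), then a unit-scale Sobolev embedding and the Jacobian factor $\rho^{2}\lambda$ from $dx=r^{2}dr\,d\omega$; the exponents you extract ($\rho\lambda^{1/2}$ for $L^\infty$, $\rho^{2(1/2-1/p)}\lambda^{1/2-1/p}$ for $L^p$) are the correct ones, so your route is sound and has the merit of being self-contained where the paper merely cites. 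Two points in your sketch need more care before the argument closes. First, the derivative count for (2.13): a literal ``one-dimensional Sobolev in $r$ plus two-dimensional Sobolev on $S^2$'' costs three vector fields, not two; to stay within $|a|\le 2$ you must apply the genuinely three-dimensional embedding $H^{2}\hookrightarrow L^\infty$ on the rescaled block, so that only derivatives of \emph{total} order $\le 2$ (including mixed radial--angular ones) appear, each converting back into at most two $Z$'s with commutators contributing only lower-order terms --- or use a $W^{1,4}(S^2)$-interpolation device as in (3.36). Second, the interior region $|x|\lesssim t$, in particular near $x=0$, where the angular rescaling degenerates: there $1+|t-r|\sim 1+t+r$, and one should work on a ball of radius $\sim 1+t$ and obtain the full weight $(1+t)^{3/2}$ from the identities (3.27) alone, since $t^{2}-r^{2}\gtrsim (1+t)^{2}$ there. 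Neither issue is a fatal gap, but both must be written out; alternatively one can simply quote \cite{Kl87} and \cite{GV} as the paper does.
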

\begin{proof}
See \cite{Kl87} for (2.13), and \cite{GV} for (2.14). 
\end{proof}
We also use the following trace-type inequality.
\begin{lemma}
Let $n\geq 2$. 
For any $s$ with $1/2<s<n/2$, 
there exists a positive constant 
depending only on $n,\,s$ such that 
if $v=v(x)$ decays sufficiently fast as $|x|\to\infty$, 
then the inequality
\begin{equation}
r^{(n/2)-s}
\|v(r\cdot)\|_{L^p(S^{n-1})}
\leq
C\|v\|_{{\dot H}^{s}({\mathbb R}^n)},\,\,
\frac{n-1}{p}=\frac{n}2-s
\end{equation}
holds, where $r:=|x|$.
\end{lemma}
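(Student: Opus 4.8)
The plan is to strip off the dilation by scaling, then reduce the fixed--radius trace estimate to a bound for a surface measure in a negative homogeneous Sobolev space by duality, and finally recognize that bound as a Hardy--Littlewood--Sobolev inequality on the sphere. To begin, put $v_r(x):=v(rx)$; then $\|v_r\|_{\dot H^s(\mathbb{R}^n)}=r^{s-(n/2)}\|v\|_{\dot H^s(\mathbb{R}^n)}$ while $\|v_r\|_{L^p(S^{n-1})}=\|v(r\,\cdot\,)\|_{L^p(S^{n-1})}$, so, multiplying through by $r^{(n/2)-s}$, it suffices to treat $r=1$, i.e.\ to prove
\[
\|v\|_{L^p(S^{n-1})}\le C\,\|v\|_{\dot H^s(\mathbb{R}^n)},\qquad \frac1p=\frac1{n-1}\Bigl(\frac n2-s\Bigr).
\]
Under the stated decay hypothesis $v|_{S^{n-1}}$ is a bona fide smooth function on $S^{n-1}$, so the left side is finite from the outset and has the usual $L^p$--$L^{p'}$ duality description.

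Let $d\sigma$ be the surface measure on $S^{n-1}$, and for $g\in L^{p'}(S^{n-1})$ view $g\,d\sigma$ as a compactly supported distribution on $\mathbb{R}^n$. By Parseval, $\int_{S^{n-1}}v\,\overline g\,d\sigma$ is a fixed multiple of $\int_{\mathbb{R}^n}\widehat v(\xi)\,\overline{\widehat{g\,d\sigma}(\xi)}\,d\xi$; inserting the factor $1=|\xi|^s|\xi|^{-s}$ and applying the Cauchy--Schwarz inequality gives
\[
\Bigl|\int_{S^{n-1}}v\,\overline g\,d\sigma\Bigr|\le C\,\|v\|_{\dot H^s(\mathbb{R}^n)}\,\bigl\|\,|\xi|^{-s}\widehat{g\,d\sigma}\,\bigr\|_{L^2(\mathbb{R}^n)}=C\,\|v\|_{\dot H^s(\mathbb{R}^n)}\,\|g\,d\sigma\|_{\dot H^{-s}(\mathbb{R}^n)}.
\]
Taking the supremum over $\|g\|_{L^{p'}(S^{n-1})}\le 1$, the fixed--radius estimate reduces to the claim $\|g\,d\sigma\|_{\dot H^{-s}(\mathbb{R}^n)}\le C\,\|g\|_{L^{p'}(S^{n-1})}$.

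To prove this last claim, note that $0<2s<n$, so the Fourier transform of $|x|^{2s-n}$ is a nonzero multiple of $|\xi|^{-2s}$, and Parseval gives
\[
\|g\,d\sigma\|_{\dot H^{-s}(\mathbb{R}^n)}^2=\int_{\mathbb{R}^n}|\xi|^{-2s}\bigl|\widehat{g\,d\sigma}(\xi)\bigr|^2\,d\xi=c_{n,s}\iint_{S^{n-1}\times S^{n-1}}\frac{g(\theta)\,\overline{g(\theta')}}{|\theta-\theta'|^{\,n-2s}}\,d\sigma(\theta)\,d\sigma(\theta').
\]
By Hölder's inequality the last integral is $\le\|g\|_{L^{p'}(S^{n-1})}\,\|Ig\|_{L^p(S^{n-1})}$, where $Ig(\theta):=\int_{S^{n-1}}|\theta-\theta'|^{-(n-2s)}g(\theta')\,d\sigma(\theta')$. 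Since $0<n-2s<n-1$, which is precisely the hypothesis $1/2<s<n/2$, the operator $I$ is comparable to the fractional integral of order $2s-1$ on the $(n-1)$-dimensional manifold $S^{n-1}$, and the Hardy--Littlewood--Sobolev inequality on $S^{n-1}$ gives $\|Ig\|_{L^p(S^{n-1})}\le C\,\|g\|_{L^{p'}(S^{n-1})}$ exactly when $\tfrac1p=\tfrac1{p'}-\tfrac{2s-1}{n-1}$, i.e.\ when $\tfrac{n-1}{p}=\tfrac n2-s$. Combining the last two displays yields $\|g\,d\sigma\|_{\dot H^{-s}}^2\le C\,\|g\|_{L^{p'}(S^{n-1})}^2$, which proves the lemma.

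The step needing genuine care is the passage to the double integral over $S^{n-1}\times S^{n-1}$: one must verify that the singular measure $g\,d\sigma$ indeed lies in $\dot H^{-s}(\mathbb{R}^n)$ and that the Parseval/Riesz-potential computation is legitimate in the distributional sense. This is exactly where the hypotheses are consumed: absolute convergence of $\iint|\theta-\theta'|^{2s-n}\,d\sigma\,d\sigma$ on $S^{n-1}$ forces $s>1/2$, while $s<n/2$ is needed both for the identity $\widehat{|x|^{2s-n}}\simeq|\xi|^{-2s}$ and for $p'>1$, and one must invoke the Hardy--Littlewood--Sobolev inequality on the compact manifold $S^{n-1}$ down to the endpoint $p=2$, which is licit precisely because $s>1/2$. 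Under $1/2<s<n/2$ the constraints $0<2s<n$, $0<n-2s<n-1$, $2<p<\infty$, and $1<p'<2$ all hold at once, so the scheme closes.
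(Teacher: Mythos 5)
Your proof is correct, but it follows a genuinely different route from the paper. The paper does not argue from scratch: it invokes the trace-type inequalities of Hoshiro (for $n\geq 3$) and Fang--Wang (for $n=2$) together with the Sobolev embedding on $S^{n-1}$, pointing to Proposition 2.4 of \cite{HWY}; in effect it trades radial $\dot H^s$ regularity for a weighted $L^2$ bound on spheres with angular regularity and then embeds into $L^p(S^{n-1})$. You instead give a self-contained argument: scaling reduces to $r=1$; duality reduces the trace bound to the extension-type estimate $\|g\,d\sigma\|_{\dot H^{-s}(\mathbb{R}^n)}\leq C\|g\|_{L^{p'}(S^{n-1})}$; the $\dot H^{-s}$ norm is rewritten as a Riesz-potential energy integral over $S^{n-1}\times S^{n-1}$ with kernel $|\theta-\theta'|^{2s-n}$; and H\"older plus Hardy--Littlewood--Sobolev on the $(n-1)$-dimensional sphere (fractional order $\alpha=2s-1$, with $\tfrac1{p'}-\tfrac1p=\tfrac{2s-1}{n-1}$ equivalent to $\tfrac{n-1}{p}=\tfrac n2-s$) closes the estimate. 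The exponent bookkeeping is exact, and the hypotheses $1/2<s<n/2$ enter precisely where you say: $s>1/2$ for integrability of the kernel on the sphere and positivity of the fractional order, $s<n/2$ for the identity $\widehat{|x|^{2s-n}}=c_{n,s}|\xi|^{-2s}$ and for $p<\infty$. Your route buys a transparent, citation-free proof and, as a by-product, the dual estimate on $g\,d\sigma$, which is of independent interest; the paper's route buys brevity by outsourcing the analytic core to the cited trace theorems (morally the two are close, since Sobolev embedding on $S^{n-1}$ is dual to HLS on $S^{n-1}$). Two cosmetic points: in the H\"older/HLS step the integrand should carry absolute values (define $Ig$ with $|g|$), and the Parseval/energy identity for the complex measure $g\,d\sigma$ requires the standard mollification argument, which you correctly flag; neither is a gap.
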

\begin{proof}
The proof uses the trace-type inequality 
due to Hoshiro \cite{Hoshiro} (for $n\geq 3$) 
and Fang and Wang \cite{FangChengbo} (for $n=2$), 
together with the Sobolev embedding on $S^{n-1}$. 
See, e.g., Proposition 2.4 of \cite{HWY}.
\end{proof}
\section{Proof of Theorem 1.4}
Since the second order quasi-linear hyperbolic system (1.10) 
can be written in the form of 
the first order quasi-linear symmetric hyperbolic system 
(see, e.g., (5.9) of Racke \cite{Racke}), 
the standard local existence theorem 
(see, e.g., Theorem 5.8 of \cite{Racke}) 
applies to the Cauchy problem 
for (1.10). 
In what follows, 
we assume that 
the initial data are smooth, compactly supported, 
and small so that 
\begin{equation}
W_4(u_1(0))
+
W_4(u_2(0))
\leq
\min
\left\{
\frac{\varepsilon_1}{C_0},
\frac{\varepsilon_2}{C_0},
\frac{C_0}{6C_3}
\right\}
\end{equation}
may hold. 
(See Propositions 3.1, 3.2, and 3.3 below for the constants 
$\varepsilon_1$, $\varepsilon_2$, $C_0$, and $C_3$.) 
We thus know that 
a unique, smooth solution to (1.10) exists 
at least for a short time interval, 
and it is compactly supported at any fixed time 
by the finite speed of propagation. 
We therefore also know that the set 
\begin{align}
\{
T>0:
&
\,\mbox{There exists a smooth solution 
$(u_1(t,x),u_2(t,x))$ to $(1.10)$ defined for}\\
& 
\,
\mbox{all 
$(t,x)\in (0,T)\times{\mathbb R}^3$ 
satisfying}\nonumber\\
&
\sup_{0<t<T}
\bigl(
W_4(u_1(t))
+
(1+t)^{-\delta}W_4(u_2(t))
\bigr)<\infty
\}\nonumber
\end{align} 
is not empty.  
We define $T^*$  as the supremum of this set. 
(If we assume $T^*<\infty$, 
then we will get contradiction 
and hence obtain global existence.) 
Using this local smooth solution 
$u=(u_1,u_2)$, 
we next define 
\begin{equation}
A(u(t))
:=
W_4(u_1(t))
+
(1+t)^{-\delta}
W_4(u_2(t)).
\end{equation}
Thanks to the compactness of the support 
of the initial data 
and the finiteness of the propagation speed of the solution, 
we can easily verify the important property that 
\begin{equation}
A(u(t))\in 
C([0,T^*)).
\end{equation}
Let $C_0\geq 2$ be the constant determined later 
(see Proposition 3.3 below). 
Owing to (3.4) and the absolute continuity of integral, 
the set 
\begin{align}
\{
T\in (0,T^*)\,:
&
\sup_{0<t<T}A(u(t))
+
\sum_{|a|\leq 3}
\biggl(
\int_0^T\!\!\!\int_{{\mathbb R}^3}
\langle t-r\rangle^{-1-2\eta}
\sum_{i=1}^3
|T_i Z^a u_1(t,x)|^2dtdx
\biggr)^{1/2}\\
&
\hspace{0.56cm}
+
\sum_{|a|\leq 3}
\sup_{0<t<T}
(1+t)^{-\delta}
\biggl(
\int_0^t\!\!\!\int_{{\mathbb R}^3}
\langle \tau-r\rangle^{-1-2\eta}
\sum_{i=1}^3
|T_i Z^a u_2(\tau,x)|^2d\tau dx
\biggr)^{1/2}\nonumber\\
&
\hspace{0.2cm}
\leq
C_0\bigl(
W_4(u_1(0))+W_4(u_2(0))
\bigr)
\}
\nonumber
\end{align}
is not empty. Let us define $T_*$ as the supremum of this set. 
By definition, we know $T_*\leq T^*$. 
We are going to show 
the key a priori estimate: 
we actually have for all $T\in (0,T_*)$
\begin{align}
&\sup_{0<t<T}A(u(t))
+
\sum_{|a|\leq 3}
\biggl(
\int_0^T\!\!\!\int_{{\mathbb R}^3}
\langle t-r\rangle^{-1-2\eta}
\sum_{i=1}^3
|T_i Z^a u_1(t,x)|^2dtdx
\biggr)^{1/2}\\
&
\hspace{1cm}
+
\sum_{|a|\leq 3}
\sup_{0<t<T}
(1+t)^{-\delta}
\biggl(
\int_0^t\!\!\!\int_{{\mathbb R}^3}
\langle \tau-r\rangle^{-1-2\eta}
\sum_{i=1}^3
|T_i Z^a u_2(\tau,x)|^2d\tau dx
\biggr)^{1/2}
\nonumber\\
&
\hspace{0.2cm}
\leq
\frac{2}{3}C_0\bigl(
W_4(u_1(0))+W_4(u_2(0))
\bigr).\nonumber
\end{align}
To obtain the estimate (3.6), 
we use the ghost weight technique of Alinhac. 
Define the linearized hyperbolic operators 
$P_1$ and $P_2$ as follows:
\begin{align}
&
P_1:=
\partial_t^2-\Delta
+
g_1^{11,\alpha\beta\gamma}
(\partial_\gamma \varphi)
\partial_{\alpha\beta}^2
+
g_1^{21,\alpha\beta\gamma}
(\partial_\gamma \psi)
\partial_{\alpha\beta}^2,\\
&
P_2:=
\partial_t^2-\Delta
+
g_2^{12,\alpha\beta\gamma}
(\partial_\gamma \varphi)
\partial_{\alpha\beta}^2
+
g_2^{22,\alpha\beta\gamma}
(\partial_\gamma \psi)
\partial_{\alpha\beta}^2
\end{align}
for 
$\varphi,\psi\in C^{\infty}((0,T)\times{\mathbb R}^3)$. 

\begin{proposition}
Suppose the symmetry condition$:$ 
there hold 
$g_1^{11,\alpha\beta\gamma}=g_1^{11,\beta\alpha\gamma}$, 
$g_1^{21,\alpha\beta\gamma}=g_1^{21,\beta\alpha\gamma}$ 
for all $\alpha,\beta,\gamma=0,\dots,3$. 
Also, suppose the null condition$:$ 
there holds 
\begin{equation}
g_1^{11,\alpha\beta\gamma}X_\alpha X_\beta X_\gamma
=
g_1^{21,\alpha\beta\gamma}X_\alpha X_\beta X_\gamma
=0
\end{equation}
for all $X=(X_0,\dots,X_3)$ 
satisfying 
$X_0^2=X_1^2+X_2^2+X_3^2$. 

Let $\eta>0$, $\delta>0$ satisfy $\eta+\delta<1/2$. 
Then, there exist two constants 
$\varepsilon_1$ and $C_1$ 
$(0<\varepsilon_1<1, C_1>0)$ depending 
only on $\eta$, $\delta$ and the coefficients $g_1^{11,\alpha\beta\gamma}$, 
$g_1^{21,\alpha\beta\gamma}$ such that 
if the functions $\varphi$ and $\psi$ satisfy 
\begin{equation}
\sup_{0<t<T}
\bigl(
W_4(\varphi(t))
+
(1+t)^{-\delta}W_4(\psi(t))
\bigr)
\leq
\varepsilon_1,
\end{equation}
then the estimate
\begin{align}
&
\sup_{0<t<T}
\|\partial u(t,\cdot)\|_{L^2({\mathbb R}^3)}
+
\biggl(
\int_0^T\!\!\!\int_{{\mathbb R}^3}
\langle t-r\rangle^{-1-2\eta}
\sum_{i=1}^3
|T_i u(t,x)|^2dtdx
\biggr)^{1/2}\\
&
\hspace{0.2cm}
\leq
C_1
\biggl(
\|\partial u(0,\cdot)\|_{L^2({\mathbb R}^3)}
+
\int_0^T
\|P_1 u(t,\cdot)\|_{L^2({\mathbb R}^3)}dt
\biggr)\nonumber
\end{align}
holds.
\end{proposition}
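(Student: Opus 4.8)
The plan is to run Alinhac's ghost-weight energy argument, adapting it on two counts that go beyond \cite{Al2010}: the coefficient $\psi$ is only controlled up to the growth $(1+t)^\delta$, and the constants $\varepsilon_1,C_1$ must not depend on the support of $u$. We introduce the ghost weight $m(t,x):=\exp(-q(r-t))$, where $q'(\sigma):=-\langle\sigma\rangle^{-1-2\eta}$; since $\eta>0$ the function $q$ is bounded, so $m$ is comparable to $1$, and $\nabla m=\langle t-r\rangle^{-1-2\eta}\omega\,m$, $-\partial_t m=\langle t-r\rangle^{-1-2\eta}m\ge 0$. Writing $f:=P_1u$ and $(\partial_t^2-\Delta)u=f-g_1^{11,\alpha\beta\gamma}(\partial_\gamma\varphi)\partial_{\alpha\beta}^2u-g_1^{21,\alpha\beta\gamma}(\partial_\gamma\psi)\partial_{\alpha\beta}^2u$, we multiply by $m\,\partial_tu$ and integrate over $(0,t)\times{\mathbb R}^3$ (the boundary terms at spatial infinity vanish, $u$ being compactly supported at each time in the application). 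For the flat part the standard computation gives
\[
\int_{{\mathbb R}^3}m\,(\partial_t^2-\Delta)u\,\partial_tu\,dx
=\frac{d}{dt}\Bigl(\frac12\int_{{\mathbb R}^3}m\,|\partial u|^2\,dx\Bigr)
+\frac12\int_{{\mathbb R}^3}\langle t-r\rangle^{-1-2\eta}m\sum_{i=1}^3|T_iu|^2\,dx ,
\]
whose second term, after integration in $t$, is exactly the space-time $T_i$-term on the left of (3.12). It remains to control the two quasi-linear contributions (one with $w=\varphi$, one with $w=\psi$) and the forcing term $\int_0^t\!\!\int m f\,\partial_tu$.

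For the quasi-linear terms, put $H^{\alpha\beta}:=g^{\alpha\beta\gamma}\partial_\gamma w$, which is symmetric in $\alpha,\beta$; an integration by parts using this symmetry rewrites $m\,H^{\alpha\beta}(\partial_{\alpha\beta}^2u)\,\partial_tu$ as $\partial_t$ of an ``energy correction'' $m\,H^{0\beta}(\partial_\beta u)(\partial_tu)-\tfrac12 m\,H^{\alpha\beta}(\partial_\alpha u)(\partial_\beta u)$, plus a spatial divergence (which integrates to zero), plus error terms of the two shapes $(\partial m)\,H\,(\partial u)(\partial u)$ and $m\,(\partial H)\,(\partial u)(\partial u)$. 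By Lemma 2.4 applied to $\partial w$ and by (3.11) we have $\|H(t,\cdot)\|_{L^\infty_x}\lesssim\varepsilon_1$ uniformly in $t$ (the factor $\langle t+r\rangle^{-1}$ absorbs the $(1+t)^\delta$ coming from $\psi$, since $\delta<1$), so after integrating in $t$ the energy correction is absorbed into $\tfrac12\int m|\partial u|^2$ once $\varepsilon_1$ is small, its value at $t=0$ going to the right-hand side.

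The error terms are handled through the null condition (3.9) and the identity $\partial_\gamma=T_\gamma-\widehat\omega_\gamma\partial_t$ (with $T_0:=0$, $\widehat\omega=(-1,\omega)$), as in Lemmas 2.1--2.3: in each error term the fully $\partial_t$-contracted piece carries the factor $g^{\alpha\beta\gamma}\widehat\omega_\alpha\widehat\omega_\beta\widehat\omega_\gamma=0$ and drops out, so each integrand is pointwise dominated by $\langle t-r\rangle^{-1-2\eta}(|\partial w|\,|Tu|+|Tw|\,|\partial u|)|\partial u|$ for the $(\partial m)H$-terms, and by $m\,(|\partial^2 w|\,|Tu|+\dots)\,|\partial u|$ for the $m\,\partial H$-terms (a further use of $\partial_\beta u=T_\beta u-\widehat\omega_\beta\partial_t u$ producing the $Tu$-factor there as well). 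In each case we insert a weight $\langle t-r\rangle^{\pm(1/2+\eta)}$, apply the Cauchy--Schwarz inequality in $(t,x)$ so that the $Tu$-factor is absorbed into $\tfrac12\int_0^t\!\!\int\langle t-r\rangle^{-1-2\eta}m\sum_i|T_iu|^2$, and bound the remaining factor by the Klainerman--Sobolev estimates $|\partial\varphi|+|\partial^2\varphi|\lesssim\varepsilon_1\langle t+r\rangle^{-1}\langle t-r\rangle^{-1/2}$ and $|\partial\psi|+|\partial^2\psi|\lesssim\varepsilon_1(1+t)^\delta\langle t+r\rangle^{-1}\langle t-r\rangle^{-1/2}$ (which need only the $W_4$-control). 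This yields a bound of the form
\[
\varepsilon_1\Bigl(\int_0^T(1+t)^{2(\delta+\eta)-2}dt\Bigr)^{1/2}\Bigl(\sup_{0<t<T}\|\partial u(t)\|_{L^2}\Bigr)\Bigl(\int_0^T\!\!\int\langle t-r\rangle^{-1-2\eta}m\sum_i|T_iu|^2\Bigr)^{1/2}
\]
together with similar, lower-order terms, and $\bigl|\int_0^t\!\!\int mf\,\partial_tu\bigr|\le\|P_1u\|_{L^1_tL^2_x}\sup_{0<t<T}\|\partial u(t)\|_{L^2}$. Since $\eta+\delta<1/2$ the displayed time integral is finite; choosing $\varepsilon_1$ small (depending only on $\eta,\delta$ and the $g_1$-coefficients) and applying Young's inequality absorbs everything into the left-hand side of (3.12) except a multiple of $\|\partial u(0)\|_{L^2}^2+\|P_1u\|_{L^1_tL^2_x}^2$, which is (3.12). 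To keep $\varepsilon_1,C_1$ independent of ${\rm supp}\,u$, the part of these integrals over the exterior region $\{r>\tfrac t2+1\}$ is estimated with the trace-type inequality (Lemma 2.5) rather than with the support-dependent inequality (1.25) of Alinhac.

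The main obstacle is the slowly growing coefficient $\psi$ in the error integrals: a crude $L^\infty$ treatment of the $\psi$-terms leaves the time density $\varepsilon_1(1+t)^{\delta-1}$, whose integral grows like $(1+T)^\delta$ and cannot be absorbed. This is exactly where the null condition on $g_1^{21}$ is indispensable --- it lets us trade a ``bad'' derivative $\partial u$ for the ``good'' derivative $Tu$, which, paired through Cauchy--Schwarz (after the insertion of $\langle t-r\rangle^{\pm(1/2+\eta)}$) with the ghost-weight gain, upgrades the density to the integrable $\varepsilon_1^2(1+t)^{2(\delta+\eta)-2}$ under the hypothesis $\eta+\delta<1/2$. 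Carrying out this weight bookkeeping carefully, and simultaneously replacing (1.25) by the trace inequality in the exterior region, is the bulk of the work.
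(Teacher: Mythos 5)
Your proposal follows essentially the same route as the paper's proof of Proposition 3.1: the ghost weight $e^{a(t-r)}$ with $a'(\rho)=-(1+|\rho|)^{-1-2\eta}$, absorption of the quasi-linear energy correction by smallness of $\varepsilon_1$, null-form decomposition of the error terms into good ($T$) and bad ($\partial_t$) pieces with Cauchy--Schwarz against the ghost space-time term (where $\eta+\delta<1/2$ enters), Young's inequality for the source term, and the interior/exterior split with the trace-type inequality of Lemma 2.5 replacing Alinhac's support-dependent (1.25). The only point you should make explicit is that the exterior trace-inequality step is needed precisely for the \emph{undifferentiated} good-derivative factors $|T\varphi|,\,|T\psi|$ multiplying $|\partial u|^2$ (the paper's (3.36)--(3.38)), since these are not covered by the Klainerman--Sobolev bounds you list, which control only differentiated quantities through $W_4$.
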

\begin{proposition}
Suppose the symmetry condition$:$ 
there hold 
$g_2^{12,\alpha\beta\gamma}=g_2^{12,\beta\alpha\gamma}$, 
$g_2^{22,\alpha\beta\gamma}=g_2^{22,\beta\alpha\gamma}$ 
for all $\alpha,\beta,\gamma=0,\dots,3$. 
Also, suppose the null condition only on $\{g_2^{22,\alpha\beta\gamma}\}$. 
Let $\eta>0$, $\delta>0$ satisfy $\eta+\delta<1/2$. 
Then, there exist two constants 
$\varepsilon_2$ and $C_2$ 
$(0<\varepsilon_2<1, C_2>0)$ depending 
only on $\eta$, $\delta$ and the coefficients $g_2^{12,\alpha\beta\gamma}$, 
$g_2^{22,\alpha\beta\gamma}$ such that 
if the functions $\varphi$ and $\psi$ satisfy 
\begin{equation}
\sup_{0<t<T}
\bigl(
W_4(\varphi(t))
+
(1+t)^{-\delta}W_4(\psi(t))
\bigr)
\leq
\varepsilon_2, 
\end{equation}
then the estimate
\begin{align}
&
\sup_{0<t<T}
(1+t)^{-\delta}
\|\partial u(t,\cdot)\|_{L^2({\mathbb R}^3)}\\
&
\hspace{0.56cm}
+
\sup_{0<t<T}
(1+t)^{-\delta}
\left(
\int_0^t\!\!\!\int_{{\mathbb R}^3}
\langle \tau-r\rangle^{-1-2\eta}
\sum_{i=1}^3
|T_i u(\tau,x)|^2d\tau dx
\right)^{1/2}\nonumber\\
&
\hspace{0.2cm}
\leq
C_2
\|\partial u(0,\cdot)\|_{L^2({\mathbb R}^3)}
+
C_2
\sup_{0<t<T}
\left(
(1+t)^{-2\delta}
\int_0^t
(1+\tau)^{\delta}
\|P_2 u(\tau,\cdot)\|_{L^2({\mathbb R}^3)}d\tau
\right)
\nonumber
\end{align}
holds.
\end{proposition}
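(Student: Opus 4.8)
The plan is to adapt Alinhac's ghost-weight energy argument (Chapter~9 of \cite{Al2010}) to the two new features of $P_2$: the coefficient $g_2^{22,\alpha\beta\gamma}$ multiplies the possibly growing function $\psi$, and $g_2^{12,\alpha\beta\gamma}$ does not satisfy the null condition. Write $F:=P_2u$ and fix the ghost weight $q=q(r-t)$ with $q'(\sigma)=\langle\sigma\rangle^{-1-2\eta}$, so that $0\le q\le q(+\infty)<\infty$ and $e^q\sim1$. Multiplying $P_2u=F$ by $2e^q\partial_tu$ and integrating over $(0,t)\times{\mathbb R}^3$, the flat part $\partial_t^2-\Delta$ produces, after one integration by parts in space, the quantity $\|\partial u(t,\cdot)\|_{L^2}^2-\|\partial u(0,\cdot)\|_{L^2}^2$ together with the ghost term $\int_0^t\!\!\int e^qq'(r-\tau)\sum_{i=1}^3|T_iu|^2$, which dominates $\int_0^t\!\!\int\langle\tau-r\rangle^{-1-2\eta}\sum_i|T_iu|^2$; the forcing produces $2\int_0^t\|\partial u(\tau,\cdot)\|_{L^2}\|F(\tau,\cdot)\|_{L^2}\,d\tau$, and inserting the weights $(1+\tau)^{\pm\delta}$ bounds this by $2\bigl(\sup_{\tau<t}(1+\tau)^{-\delta}\|\partial u(\tau,\cdot)\|_{L^2}\bigr)(1+t)^{2\delta}\bigl(\sup_{\sigma<t}(1+\sigma)^{-2\delta}\int_0^\sigma(1+\tau)^\delta\|F(\tau,\cdot)\|_{L^2}\,d\tau\bigr)$, which is exactly how the unusual right-hand side of the asserted estimate arises.

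Next I would treat the two quasi-linear terms $g_2^{12,\alpha\beta\gamma}(\partial_\gamma\varphi)(\partial^2_{\alpha\beta}u)$ and $g_2^{22,\alpha\beta\gamma}(\partial_\gamma\psi)(\partial^2_{\alpha\beta}u)$ by a single integration by parts using only the symmetry $g_2^{\cdot,\alpha\beta\gamma}=g_2^{\cdot,\beta\alpha\gamma}$, namely $(\partial^2_{\alpha\beta}u)(\partial_0u)=\partial_\alpha\{(\partial_\beta u)(\partial_0u)\}-\tfrac12\partial_0\{(\partial_\alpha u)(\partial_\beta u)\}$; after this no second derivative of $u$ survives. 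For each $w\in\{\varphi,\psi\}$ the remainder splits into (a)~space-time divergences, whose boundary values at $\tau=0,t$ are bounded by $(\|\partial\varphi(\tau,\cdot)\|_{L^\infty}+\|\partial\psi(\tau,\cdot)\|_{L^\infty})\|\partial u(\tau,\cdot)\|_{L^2}^2$; (b)~``$q'$-terms'', in which $\partial_\alpha$ falls on $e^q$, of the schematic form $e^qq'\,\widehat\omega_\alpha g_2^{\cdot,\alpha\beta\gamma}(\partial_\gamma w)(\partial_\beta u)(\partial u)$; and (c)~``curvature terms'', in which $\partial_\alpha$ falls on $\partial_\gamma w$, of the form $e^qg_2^{\cdot,\alpha\beta\gamma}(\partial^2_{\alpha\gamma}w)(\partial_\beta u)(\partial u)$. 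From the smallness hypothesis and the Klainerman--Sobolev inequality~(2.13) applied to $\partial\varphi$ and $\partial^2\varphi$ one has $\|\partial\varphi(\tau,\cdot)\|_{L^\infty}+\|\partial^2\varphi(\tau,\cdot)\|_{L^\infty}\le C\varepsilon_2(1+\tau)^{-1}\langle\tau-r\rangle^{-1/2}$; since $\varphi$ does not grow and $q'\le1$, every $\varphi$-contribution in (a)--(c) is pointwise $\le C\varepsilon_2(1+\tau)^{-1}|\partial u|^2$, so it gives at most $C\varepsilon_2\|\partial u(t,\cdot)\|_{L^2}^2$ (from $\tau=t$, absorbed on the left if $\varepsilon_2$ is small), plus $C\varepsilon_2\|\partial u(0,\cdot)\|_{L^2}^2$, plus $C\varepsilon_2\int_0^t(1+\tau)^{-1}\|\partial u(\tau,\cdot)\|_{L^2}^2\,d\tau$; no further structure of $g_2^{12,\alpha\beta\gamma}$ is needed.

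The $\psi$-contributions are the crux, since $\|\partial\psi(\tau,\cdot)\|_{L^\infty}$ and $\|\partial^2\psi(\tau,\cdot)\|_{L^\infty}$ decay only like $\varepsilon_2(1+\tau)^{-1+\delta}\langle\tau-r\rangle^{-1/2}$, too slowly for the crude argument; here I would exploit the null condition on $g_2^{22,\alpha\beta\gamma}$. In each term of (b), (c) with $w=\psi$, substitute $\partial_\alpha=T_\alpha-\omega_\alpha\partial_t$ (with $T_0=0$, $\omega_0=-1$, so $\omega_\alpha$ plays the role of $\widehat\omega_\alpha$) in every factor; the piece with no $T$-derivative carries the coefficient $\widehat\omega_\alpha\widehat\omega_\beta\widehat\omega_\gamma g_2^{22,\alpha\beta\gamma}$, which vanishes by the null condition, so every surviving piece contains at least one good derivative, $T_\gamma\psi$, $T_i\partial_\gamma\psi$, or $T_iu$. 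The pieces with a good derivative of $\psi$ gain an extra factor $(1+\tau)^{-1}$ through~(2.12) and are controlled by the direct decay estimates; the dangerous pieces carry $|T_iu|$ together with an honest $|\partial\psi|$ or $|\partial^2\psi|$ and are bounded by $C\varepsilon_2(1+\tau)^{-1+\delta}\langle\tau-r\rangle^{-1/2}|Tu||\partial u|$. For these I would apply Cauchy--Schwarz with weight $\lambda=(1+\tau)^\delta\langle\tau-r\rangle^{-1/2}$: the $|Tu|^2$-part then has coefficient $\le C\varepsilon_2(1+\tau)^{2\delta-1}\langle\tau-r\rangle^{-1}$ and the $|\partial u|^2$-part has coefficient $\le C\varepsilon_2(1+\tau)^{-1}$; in the region $r\le(\tau/2)+1$ one has $\langle\tau-r\rangle\le1+\tau$, so the hypothesis $\eta+\delta<1/2$ yields $(1+\tau)^{2\delta-1}\le\langle\tau-r\rangle^{-2\eta}$ and the $|Tu|^2$-part becomes $\le C\varepsilon_2\langle\tau-r\rangle^{-1-2\eta}|Tu|^2$, absorbed into the ghost term. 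In the region $r>(\tau/2)+1$ — where comparing $\langle\tau-r\rangle$ with $1+\tau$, or using a pointwise $L^\infty$-estimate as Alinhac did with~(1.25), would cost a factor depending on $R_*$ — I would instead bound the low-order $\psi$-factors by $\|\partial\psi(\tau,r\cdot)\|_{L^p(S^2)}\le Cr^{s-3/2}\|\partial\psi(\tau)\|_{\dot H^s}\le Cr^{s-3/2}W_4(\psi(\tau))$ for a suitable $s\in(1/2,3/2)$ via Lemma~2.5, closing the radial and angular integrals by H\"older; this is the device that keeps everything inside the $W_4$-controlled quantities and removes the dependence on $R_*$.

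Collecting the three steps gives, for each $t\in(0,T)$,
\[
\|\partial u(t,\cdot)\|_{L^2}^2+\tfrac12\int_0^t\!\!\int e^qq'\sum_i|T_iu|^2\le C\|\partial u(0,\cdot)\|_{L^2}^2+CY(t)(1+t)^{2\delta}{\mathcal F}(t)+C\varepsilon_2\int_0^t(1+\tau)^{-1}\|\partial u(\tau,\cdot)\|_{L^2}^2\,d\tau,
\]
where $Y(t)$ denotes the left-hand side of the asserted estimate truncated at time $t$ and ${\mathcal F}(t)$ the forcing quantity on its right-hand side. Multiplying by $(1+t)^{-2\delta}$, using $\|\partial u(\tau,\cdot)\|_{L^2}^2\le(1+\tau)^{2\delta}Y(t)^2$ and $\int_0^t(1+\tau)^{2\delta-1}\,d\tau\le(2\delta)^{-1}(1+t)^{2\delta}$, and taking the supremum over $t\in(0,T)$, one obtains $Y(T)^2\le C\|\partial u(0,\cdot)\|_{L^2}^2+CY(T){\mathcal F}(T)+C(2\delta)^{-1}\varepsilon_2Y(T)^2$; choosing $\varepsilon_2$ small relative to $\delta$ absorbs the last term (legitimate since $Y(T)<\infty$ by the local theory and compact support), whence $Y(T)\le C_2(\|\partial u(0,\cdot)\|_{L^2}+{\mathcal F}(T))$, which is the assertion with $C_2=C_2(\eta,\delta,g_2^{12},g_2^{22})$. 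I expect the main obstacle to be the $\psi$-part of the third step: organising the null decomposition of the integration-by-parts remainders so that, by a single Cauchy--Schwarz with the weight $(1+\tau)^\delta\langle\tau-r\rangle^{-1/2}$, every dangerous term splits into a piece absorbed by the ghost weight and a piece carrying only the benign coefficient $\varepsilon_2(1+\tau)^{-1}$, and, in the region $r>(\tau/2)+1$, replacing the $R_*$-dependent estimate~(1.25) by the trace inequality of Lemma~2.5 without losing this structure.
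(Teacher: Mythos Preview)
Your outline is correct and follows essentially the same ghost-weight strategy as the paper. Two points of comparison are worth noting. First, for the dangerous $\psi$-pieces $|\partial^2\psi|\,|Tu|\,|\partial u|$, the paper keeps the full Klainerman--Sobolev factor $(1+t+r)^{-1}$ and writes $|\partial^2\psi|\le C\Phi(t)(1+t+r)^{-1+\delta+\eta}\langle t-r\rangle^{-1/2-\eta}$ (see (3.32)); the resulting Cauchy--Schwarz split gives $|Tu|^2$ with coefficient $\langle t-r\rangle^{-1-2\eta}$ (absorbed directly) and $|\partial u|^2$ with coefficient $(1+t+r)^{-2+2\delta+2\eta}$, which is \emph{integrable} in $t$. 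Thus in the paper only the non-null $\varphi$-terms produce the borderline coefficient $(1+t)^{-1}$ that forces the $(1+t)^{-2\delta}$ weighting; your coarser weight $\lambda=(1+\tau)^\delta\langle\tau-r\rangle^{-1/2}$ makes the $\psi$-terms contribute a second $(1+\tau)^{-1}|\partial u|^2$, which is harmless since it is absorbed by the same mechanism, but it is the reason you then need an exterior patch for the $|Tu|^2$-part. Second, the paper invokes the trace inequality (Lemma~2.5) only for the specific $|T\psi|\,|\partial u|^2$ piece of $q_2$ in the exterior, bounding $|T\psi|\le C(1+t)^{-1}\sum_{|a|=1}\|Z^a\psi(r\cdot)\|_{L^4(S^2)}\le C(1+t)^{-3/2+\delta}\Phi(t)$ (cf.\ (3.36)--(3.37)); your plan to use Lemma~2.5 on $\partial\psi$ in the exterior would also work, but with the sharper $(1+t+r)^{-1}$ bookkeeping it is not needed there.
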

\begin{proof}
We first prove Proposition 3.1. 
We basically follow the argument on pages 
92 to 94 of \cite{Al2010}. 
Let $a=a(t-r)$ be chosen later. 
In the same way as on page 93 of \cite{Al2010}, 
we get by using 
$T_i=\partial_i+\omega_i\partial_t$ 
$(\omega_i:=x_i/|x|,\,i=1,2,3)$
\begin{align}
e^a
(\square u)\partial_t u
&=
\frac12
\partial_t
\bigl\{
e^a
\bigl(
(\partial_t u)^2
+
|\nabla u|^2
\bigr)
\bigr\}\\
&
-\nabla
\cdot
\{\cdots\}
-\frac12
e^a
a'(t-r)
\sum_{i=1}^3
(T_i u)^2,\nonumber
\end{align}
\begin{align}
&
e^a
g_1^{11,\alpha\beta\gamma}
(\partial_\gamma\varphi)
(\partial_{\alpha\beta}^2 u)
(\partial_t u)\\
&
\hspace{0.2cm}
=
-\frac12
\partial_t
\{
e^a
g_1^{11,\alpha\beta\gamma}
(\partial_\gamma\varphi)
(\partial_{\beta} u)
(\partial_\alpha u)
-2e^a
g_1^{11,0\beta\gamma}
(\partial_\gamma\varphi)
(\partial_{\beta} u)
(\partial_t u)
\}\nonumber\\
&
\hspace{0.7cm}
+\nabla\cdot\{\cdots\}
+
\frac12
e^a
\{
g_1^{11,\alpha\beta\gamma}
(\partial_{t\gamma}^2\varphi)
(\partial_{\beta} u)
(\partial_\alpha u)
-2
g_1^{11,\alpha\beta\gamma}
(\partial_{\alpha\gamma}^2\varphi)
(\partial_{\beta} u)
(\partial_t u)
\}\nonumber\\
&
\hspace{0.7cm}
+\frac12
e^a
a'(t-r)
\{
g_1^{11,\alpha\beta\gamma}
(\partial_{\gamma} \varphi)
(\partial_{\beta} u)
(\partial_\alpha u)
-2
g_1^{11,0\beta\gamma}
(\partial_{\gamma} \varphi)
(\partial_{\beta} u)
(\partial_t u)\nonumber\\
&
\hspace{3.7cm}
+
2\omega_i
g_1^{11,i\beta\gamma}
(\partial_{\gamma} \varphi)
(\partial_{\beta} u)
(\partial_t u)
\}.\nonumber
\end{align}
Here, and in the following as well, 
because it vanishes after integration over ${\mathbb R}^3$, 
we write such a ``harmless'' term as $\nabla\cdot\{\cdots\}$. 
Obviously, we also get the equality 
similar to (3.15), with 
$g_1^{11,\alpha\beta\gamma}$ and $\varphi$ 
replaced by 
$g_1^{21,\alpha\beta\gamma}$ and $\psi$, respectively. 
Therefore, we have
\begin{align}
&
e^a(P_1 u)(\partial_t u)\\
&
\hspace{0.2cm}
=
\frac12
\partial_t
\bigl\{
e^a
\bigl(
(\partial_t u)^2
+
|\nabla u|^2\nonumber\\
&
\hspace{2.0cm}
-
g_1^{11,\alpha\beta\gamma}
(\partial_{\gamma} \varphi)
(\partial_{\beta} u)
(\partial_\alpha u)
+2
g_1^{11,0\beta\gamma}
(\partial_{\gamma} \varphi)
(\partial_{\beta} u)
(\partial_t u)\nonumber\\
&
\hspace{2.0cm}
-
g_1^{21,\alpha\beta\gamma}
(\partial_{\gamma} \psi)
(\partial_{\beta} u)
(\partial_\alpha u)
+2
g_1^{21,0\beta\gamma}
(\partial_{\gamma} \psi)
(\partial_{\beta} u)
(\partial_t u)
\bigr)
\bigr\}\nonumber\\
&
\hspace{0.58cm}
+\nabla\cdot\{\cdots\}
+
e^aq,\nonumber
\end{align}
where 
$q=q_1-a'(t-r)(q_2/2)$,
\begin{align}
q_1
&
=
\frac12
g_1^{11,\alpha\beta\gamma}
(\partial_{t\gamma}^2 \varphi)
(\partial_{\beta} u)
(\partial_\alpha u)
-
g_1^{11,\alpha\beta\gamma}
(\partial_{\alpha\gamma}^2 \varphi)
(\partial_{\beta} u)
(\partial_t u)\\
&
\hspace{0.56cm}
+
\frac12
g_1^{21,\alpha\beta\gamma}
(\partial_{t\gamma}^2 \psi)
(\partial_{\beta} u)
(\partial_\alpha u)
-
g_1^{21,\alpha\beta\gamma}
(\partial_{\alpha\gamma}^2 \psi)
(\partial_{\beta} u)
(\partial_t u),\nonumber
\end{align}
and, for $\omega_0=-1$
\begin{align}
q_2
&=
\sum_{i=1}^3(T_i u)^2\\
&
\hspace{0.40cm}
-
g_1^{11,\alpha\beta\gamma}
(\partial_{\gamma} \varphi)
(\partial_{\beta} u)
(\partial_\alpha u)
+
2g_1^{11,\alpha\beta\gamma}
(\partial_{\gamma} \varphi)
(\partial_{\beta} u)
(-\omega_\alpha)
(\partial_t u)\nonumber\\
&
\hspace{0.40cm}
-
g_1^{21,\alpha\beta\gamma}
(\partial_{\gamma} \psi)
(\partial_{\beta} u)
(\partial_\alpha u)
+
2g_1^{21,\alpha\beta\gamma}
(\partial_{\gamma} \psi)
(\partial_{\beta} u)
(-\omega_\alpha)
(\partial_t u).\nonumber
\end{align}
We will follow page 93 of \cite{Al2010} to deal with $q_1$, 
while to treat $q_2$, we need to proceed differently in part 
from page 94 of \cite{Al2010}. 
Writing $g^{\alpha\beta\gamma}=g_1^{11,\alpha\beta\gamma}$ 
for short, using $\partial_i=T_i-\omega_i\partial_t$ 
$(\omega_i=x_i/|x|,\,i=1,2,3)$, 
and setting $T_0=0$, $\omega_0=-1$ as before, 
we have for the first term on the right-hand side of (3.17)
\begin{align}
&
g^{\alpha\beta\gamma}
(\partial_{t\gamma}^2 \varphi)
(\partial_\beta u)
(\partial_\alpha u)\\
&
\hspace{0.2cm}
=
g^{\alpha\beta\gamma}
(T_\gamma\partial_t \varphi)
(\partial_\beta u)
(\partial_\alpha u)
-
g^{\alpha\beta\gamma}
\omega_\gamma
(\partial_t^2 \varphi)
(\partial_\beta u)
(\partial_\alpha u)\nonumber\\
&
\hspace{0.2cm}
=
g^{\alpha\beta\gamma}
(T_\gamma\partial_t \varphi)
(\partial_\beta u)
(\partial_\alpha u)
-
g^{\alpha\beta\gamma}
\omega_\gamma
(\partial_t^2 \varphi)
(T_\beta u)
(\partial_\alpha u)\nonumber\\
&
\hspace{0.56cm}
+
g^{\alpha\beta\gamma}
\omega_\beta
\omega_\gamma
(\partial_t^2 \varphi)
(\partial_t u)
(\partial_\alpha u)\nonumber\\
&
\hspace{0.2cm}
=
g^{\alpha\beta\gamma}
(T_\gamma\partial_t \varphi)
(\partial_\beta u)
(\partial_\alpha u)
-
g^{\alpha\beta\gamma}
\omega_\gamma
(\partial_t^2 \varphi)
(T_\beta u)
(\partial_\alpha u)\nonumber\\
&
\hspace{0.56cm}
+
g^{\alpha\beta\gamma}
\omega_\beta
\omega_\gamma
(\partial_t^2 \varphi)
(\partial_t u)
(T_\alpha u)
-
g^{\alpha\beta\gamma}
\omega_\alpha
\omega_\beta
\omega_\gamma
(\partial_t^2 \varphi)
(\partial_t u)^2.\nonumber
\end{align}
Since $g_1^{11,\alpha\beta\gamma}
\omega_\alpha
\omega_\beta
\omega_\gamma=0$ 
due to the null condition, 
we finally obtain
\begin{equation}
|g_1^{11,\alpha\beta\gamma}
(\partial_{t\gamma}^2 \varphi)
(\partial_{\beta} u)
(\partial_\alpha u)|
\leq
C
|T\partial_t\varphi|
|\partial u|^2
+
C
|\partial_t^2\varphi|
|Tu|
|\partial u|.
\end{equation}
(For the definition of 
$|Tv|$ and $|\partial v|$, 
see (2.7), (2.11).) 
Similarly, we can get for the second term on the right-hand side of (3.17)
\begin{equation}
|g_1^{11,\alpha\beta\gamma}
(\partial_{\alpha\gamma}^2 \varphi)
(\partial_{\beta} u)
(\partial_t u)|
\leq
C
|T\partial\varphi|
|\partial u|
|\partial_t u|
+
C
|\partial_t^2 \varphi|
|Tu|
|\partial_t u|.
\end{equation}
(For the definition of 
$|T\partial v|$ and $|\partial^2 v|$, 
see (2.7), (2.11).) 
Obviously, for the third and fourth terms 
on the right-hand side of (3.17), 
we can obtain the inequality 
similar to (3.20) and (3.21), respectively, 
naturally with $\varphi$ replaced by $\psi$. 

For the second and third terms on the right-hand side of 
(3.18), we can proceed as above to obtain
\begin{align}
&|
g_1^{11,\alpha\beta\gamma}
(\partial_\gamma\varphi)
(\partial_\beta u)
(\partial_\alpha u)
|
\leq
C
|T\varphi|
|\partial u|^2
+
C
|\partial_t\varphi|
|Tu|
|\partial u|,\\
&
|
g_1^{11,\alpha\beta\gamma}
(\partial_\gamma\varphi)
(\partial_\beta u)
(-\omega_\alpha)
(\partial_t u)
|
\leq
C
|\partial_t\varphi|
|Tu|
|\partial_t u|
+
C
|T\varphi|
|\partial u|
|\partial_t u|.
\end{align}
Naturally, the inequalities similar to (3.22) and (3.23) hold 
for the fourth and fifth terms on the right-hand side of (3.18), 
respectively, with $\varphi$ replaced by $\psi$. 

We get by (3.19)--(3.23)
\begin{equation}
q_1
\geq
-C
(
|T\partial\varphi|
+
|T\partial\psi|
)
|\partial u|^2
-C
(
|\partial_t^2\varphi|
+
|\partial_t^2\psi|
)
|Tu|
|\partial u|
\end{equation}
and
\begin{equation}
q_2
\geq
\sum_{i=1}^3(T_i u)^2
-C
(
|T\varphi|
+
|T\psi|
)
|\partial u|^2
-C
(
|\partial_t\varphi|
+
|\partial_t\psi|
)
|Tu|
|\partial u|
\end{equation}
for a positive constant $C$. 
To continue the estimate of $q_1$ and $q_2$, 
we use the remarkable improvement 
of point-wise decay of the special derivatives $T_k v$. 
Using
\begin{equation}
T_k
=
\frac{1}{t}
\bigl(
L_k-\omega_k(r-t)\partial_t
\bigr)
=
\frac{1}{r}
\bigl(
L_k+(r-t)\partial_k
\bigr)
\end{equation}
and
\begin{equation}
\partial_t
=
\frac{\displaystyle{tS-\sum_{j=1}^3x_jL_j}}{t^2-r^2},\quad
\partial_k
=
\frac{\displaystyle{tL_k+\sum_{j=1}^3x_j\Omega_{kj}-x_kS}}{t^2-r^2},
\end{equation}
we get
\begin{align}
|T_k\partial_\gamma\varphi|
&\leq
C(1+t+r)^{-1}
\sum_{|a|=1}|Z^a \partial_\gamma\varphi|\\
&
\leq
C(1+t+r)^{-2}
(1+|t-r|)^{-1/2}W_4(\varphi(t)),\nonumber\\
|T_k\partial_\gamma\psi|
&\leq
C(1+t+r)^{-2+\delta}
(1+|t-r|)^{-1/2}
\bigl(
(1+t)^{-\delta}
W_4(\psi(t))
\bigr),
\end{align}
which yield
\begin{equation}
-C
(
|T\partial\varphi|
+
|T\partial\psi|
)
|\partial u|^2
\geq
-C(1+t)^{-2+\delta}
\Phi(t)
|\partial u|^2,
\end{equation}
where, and in the following as well, we use the notation
\begin{equation}
\Phi(t):=
W_4(\varphi(t))
+
(1+t)^{-\delta}
W_4(\psi(t)).
\end{equation}
Furthermore, 
since we have by (2.13)
\begin{align}
|\partial_t^2\varphi|
+
|\partial_t^2\psi|
&
\leq
C(1+t+r)^{-1+\delta}
(1+|t-r|)^{-1/2}
\Phi(t)\\
&
\leq
C(1+t+r)^{-1+\delta+\eta}
(1+|t-r|)^{-(1/2)-\eta}
\Phi(t),\nonumber
\end{align}
we obtain 
\begin{align}
&
-C
(
|\partial_t^2\varphi|
+
|\partial_t^2\psi|
)
|Tu|
|\partial u|
\\
&
\geq
-C(1+|t-r|)^{-(1/2)-\eta}
|Tu|
\Phi(t)^{1/2}
(1+t+r)^{-1+\delta+\eta}
|\partial u|
\Phi(t)^{1/2}\nonumber\\
&
\geq
-C
(1+|t-r|)^{-1-2\eta}
|Tu|^2
\Phi(t)
-C
(1+t+r)^{-2+2\delta+2\eta}
|\partial u|^2
\Phi(t).\nonumber
\end{align}
Combining (3.24) with (3.30), (3.33), 
we get
\begin{align}
q_1\geq&
-C(1+|t-r|)^{-1-2\eta}\Phi(t)|Tu|^2\\
&
-C(1+t)^{-2+\delta}\Phi(t)|\partial u|^2
-C(1+t)^{-2+2\delta+2\eta}\Phi(t)|\partial u|^2.\nonumber
\end{align}
To estimate $q_2$ from below, 
we proceed differently from 
what Alinhac did 
by using (1.25) 
on page 94 of \cite{Al2010}. 
We first divide $(0,\infty)\times{\mathbb R}^3$ 
into the two pieces: 
$D_{int}:=\{(t,x):|x|<(t/2)+1\}$ 
and its complement $D_{ext}$. 
Using (2.13) and (3.18), 
we easily get for $(t,x)\in D_{int}$
\begin{equation}
q_2
\geq
|Tu|^2
-C(1+t)^{-(3/2)+\delta}
\Phi(t)|\partial u|^2.
\end{equation}
On the other hand, 
for $(t,x)\in D_{ext}$, we use (3.25). 
Since we have by the Sobolev embedding on $S^2$ and (2.15) 
\begin{align}
|T_k\varphi|
&
\leq
C(1+t+r)^{-1}\sum_{|a|=1}|Z^a\varphi|\\
&
\leq
C(1+t+r)^{-1}\sum_{|a|=1}
\biggl(
\|Z^a\varphi(r\cdot)\|_{L^4(S^2)}
+
\sum_{1\leq i<j\leq 3}
\|\Omega_{ij}Z^a\varphi(r\cdot)\|_{L^4(S^2)}
\biggr)\nonumber\\
&
\leq
C(1+t+r)^{-1}r^{-1/2}W_4(\varphi(t)),\nonumber
\end{align}
we get for $(t,x)\in D_{ext}$
\begin{equation}
|T\varphi|
+
|T\psi|
\leq
C(1+t)^{-(3/2)+\delta}\Phi(t),
\end{equation}
which yields for $(t,x)\in D_{ext}$
\begin{align}
q_2
\geq
&|Tu|^2
-C(1+t)^{-(3/2)+\delta}\Phi(t)|\partial u|^2\\
&
-C(1+|t-r|)^{-1-2\eta}\Phi(t)|Tu|^2
-C(1+t)^{-2+2\delta+2\eta}\Phi(t)|\partial u|^2.\nonumber
\end{align}
Now we are ready to finish the proof of Proposition 3.1. 
Let us choose the function $a=a(\rho)$ $(\rho\in{\mathbb R})$ as
$$
a(\rho)
=
-\int_0^\rho
(1+|s|)^{-1-2\eta}ds
$$
so that $a'(\rho)=-(1+|\rho|)^{-1-2\eta}$ may hold. 
Remark that there exists a positive constant $c_1$ 
such that $c_1\leq e^a\leq c_1^{-1}$, 
because $a$ is bounded. 
We then observe for $a=a(t-r)$
\begin{align}
\int_{{\mathbb R}^3}
e^aqdx
&=
\int_{{\mathbb R}^3}
e^a
(q_1+(1+|t-r|)^{-1-2\eta}q_2)dx\\
&
\geq
\bigl(
c_1-c_2\Phi(t)
\bigr)
\int_{{\mathbb R}^3}
(1+|t-r|)^{-1-2\eta}|Tu|^2dx\nonumber\\
&
\hspace{0.4cm}
-c_3
\max\{(1+t)^{-(3/2)+\delta},(1+t)^{-2+2\delta+2\eta}\}
\Phi(t)
\int_{{\mathbb R}^3}
|\partial u|^2dx, \nonumber
\end{align}
where $c_2$ and $c_3$ are positive constants. 
Suppose that 
$\sup_{0<t<T}\Phi(t)$ is sufficiently small so that
\begin{align*}
&c_1-c_2\sup_{0<t<T}\Phi(t)
\geq
\frac{c_1}{2},\\
&
-c_3\int_0^\infty\max\{(1+t)^{-(3/2)+\delta},(1+t)^{-2+2\delta+2\eta}\}dt
\sup_{0<t<T}\Phi(t)\geq -c_4
\end{align*}
($c_4$ is a sufficiently small positive constant). 
By integrating (3.16) over 
$(0,T)\times{\mathbb R}^3$ 
and then using the Young inequality to get 
\begin{align*}
&\int_0^T\|P_1u(t)\|_{L^2({\mathbb R}^3)}dt
\sup_{0<t<T}\|\partial u(t)\|_{L^2({\mathbb R}^3)}\\
&
\hspace{0.2cm}
\leq
C\left(\int_0^T\|P_1u(t)\|_{L^2({\mathbb R}^3)}dt\right)^2
+
C^{-1}\left(\sup_{0<t<T}\|\partial u(t)\|_{L^2({\mathbb R}^3)}\right)^2
\end{align*}
for a sufficiently large positive constant $C$, 
and finally moving the second term on the right-hand side above to the other side 
of the resulting inequality, 
we obtain (3.11). 

Let us next prove Proposition 3.2. 
We assume the null condition on 
$\{g_2^{22,\alpha\beta\gamma}\}$ 
but not on $\{g_2^{12,\alpha\beta\gamma}\}$. 
We can proceed as in (3.19)--(3.25), 
except the difference that we must 
take account of the terms
\begin{equation}
g_2^{12,\alpha\beta\gamma}
\omega_\alpha
\omega_\beta
\omega_\gamma
(\partial_t^i\varphi)
(\partial_t u)^2,\quad
i=1,2
\end{equation}
which do not always vanish. 
We therefore get 
\begin{equation}
q_1
\geq
-C|\partial_t^2\varphi|(\partial_t u)^2
-C
(
|T\partial\varphi|
+
|T\partial\psi|
)
|\partial u|^2
-C
(
|\partial_t^2\varphi|
+
|\partial_t^2\psi|
)
|Tu|
|\partial u|
\end{equation}
and
\begin{equation}
q_2
\geq
\sum_{i=1}^3(T_i u)^2
-C|\partial_t\varphi|(\partial_t u)^2
-C
(
|T\varphi|
+
|T\psi|
)
|\partial u|^2
-C
(
|\partial_t \varphi|
+
|\partial_t \psi|
)
|Tu|
|\partial u|
\end{equation}
in place of (3.24)--(3.25). 
These yield
\begin{align}
\int_{{\mathbb R}^3}
e^a q dx
&\geq
\bigl(
c_1-c_5\Phi(t)
\bigr)
\int_{{\mathbb R}^3}
(1+|t-r|)^{-1-2\eta}
|Tu|^2dx\\
&
\hspace{0.4cm}
-C
(1+t)^{-1+2\delta}
\Phi(t)
\left(
(1+t)^{-2\delta}
\int_{{\mathbb R}^3}
|\partial u(t,x)|^2dx
\right),\nonumber
\end{align}
where $c_5$ is a positive constant. 
Suppose that $\sup_{0<t<T}\Phi(t)$ is small so that 
$$
c_1-c_5\sup_{0<t<T}\Phi(t)\geq \frac{c_1}{2}.
$$ 
Then, we get for $0<t<T$ (cf. (3.16))
\begin{align}
&
\left[
\frac12
\int_{{\mathbb R}^3}
e^a
\biggl(
(\partial_t u)^2
+
|\nabla u|^2
\right.\\
&
\hspace{2cm}
-
g_2^{12,\alpha\beta\gamma}
(\partial_{\gamma} \varphi)
(\partial_{\beta} u)
(\partial_\alpha u)
-2
g_2^{12,0\beta\gamma}
(\partial_{\gamma} \varphi)
(\partial_{\beta} u)
(\partial_t u)\nonumber\\
&
\hspace{2cm}
\left.
-
g_2^{22,\alpha\beta\gamma}
(\partial_{\gamma} \psi)
(\partial_{\beta} u)
(\partial_\alpha u)
-2
g_2^{22,0\beta\gamma}
(\partial_{\gamma} \psi)
(\partial_{\beta} u)
(\partial_t u)
\biggr)dx
\right]_{\tau=0}^{\tau=t}\nonumber\\
&
\hspace{0.6cm}
+\frac12
c_1\int_0^t
\int_{{\mathbb R}^3}
(1+|\tau-r|)^{-1-2\eta}
|Tu(\tau,x)|^2dxd\tau\nonumber\\
&
\hspace{0.6cm}
-c_6(1+t)^{2\delta}
\left(
\sup_{0<\tau<T}
\Phi(\tau)
\right)
\sup_{0<\tau<T}
\left(
(1+\tau)^{-2\delta}
\int_{{\mathbb R}^3}
|\partial u(\tau,x)|^2dx
\right)\nonumber\\
&
\hspace{0.2cm}
\leq
\int_0^t
\|P_2u(\tau)\|_{L^2({\mathbb R}^3)}
\|\partial_t u(\tau)\|_{L^2({\mathbb R}^3)}d\tau.\nonumber
\end{align}
Suppose further that $c_6\sup_{0<t<T}\Phi(t)$ is small enough. 
We divide both sides of (3.44) by $(1+t)^{2\delta}$ 
and use the Young inequality to get
\begin{align*}
&
(1+t)^{-2\delta}
\int_0^t
\|P_2u(\tau)\|_{L^2({\mathbb R}^3)}
\|\partial_t u(\tau)\|_{L^2({\mathbb R}^3)}d\tau\\
&
\hspace{0.2cm}
\leq
(1+t)^{-2\delta}
\int_0^t
(1+\tau)^{\delta}
\|P_2u(\tau)\|_{L^2({\mathbb R}^3)}
d\tau
\sup_{0<\tau<t}
(1+\tau)^{-\delta}\|\partial_t u(\tau)\|_{L^2({\mathbb R}^3)}\\
&
\hspace{0.2cm}
\leq
C
\sup_{0<t<T}
\left(
(1+t)^{-2\delta}
\int_0^t
(1+\tau)^{\delta}
\|P_2u(\tau)\|_{L^2({\mathbb R}^3)}
d\tau
\right)^2\\
&
\hspace{0.56cm}
+
C^{-1}
\left(
\sup_{0<\tau<T}
(1+\tau)^{-\delta}
\|\partial_t u(\tau)\|_{L^2({\mathbb R}^3)}
\right)^2, 
\end{align*}
where $C$ is a sufficiently large constant. 
Since $C^{-1}$ is small enough, 
we finally obtain (3.13). 
\end{proof}
Next, let us carry out the energy estimate for 
$u_1$ and $u_2$. 
\begin{proposition}
Set $C_0:=2\max\{1,\,C_1,\,C_2\}$ 
for the constants $C_1$ and $C_2$ 
$($see $(3.11)$, $(3.13))$. 
Suppose that 
the initial data satisfies 
\begin{equation}
C_0
\bigl(
W_4(u_1(0))+W_4(u_2(0))
\bigr)
\leq
\min\{\varepsilon_1,\,\varepsilon_2\}
\end{equation}
$($see $(3.10)$, $(3.12)$ for $\varepsilon_1$, $\varepsilon_2)$ 
and the local solution $(u_1,u_2)$ to $(1.10)$ satisfies for some $T>0$
\begin{align}
&\sup_{0<t<T}A(u(t))
+
\sum_{|a|\leq 3}
\biggl(
\int_0^T\!\!\!\int_{{\mathbb R}^3}
\langle t-r\rangle^{-1-2\eta}
\sum_{i=1}^3
|T_i Z^a u_1(t,x)|^2dtdx
\biggr)^{1/2}\\
&
\hspace{1cm}
+
\sum_{|a|\leq 3}
\sup_{0<t<T}
(1+t)^{-\delta}
\biggl(
\int_0^t\!\!\!\int_{{\mathbb R}^3}
\langle \tau-r\rangle^{-1-2\eta}
\sum_{i=1}^3
|T_i Z^a u_2(\tau,x)|^2d\tau dx
\biggr)^{1/2}\nonumber\\
&
\hspace{0.2cm}
\leq
C_0\bigl(
W_4(u_1(0))+W_4(u_2(0))
\bigr).\nonumber
\end{align}
Then, there exists a constant $C_3>0$ such that 
\begin{align}
&
\sup_{0<t<T}A(u(t))
+
\sum_{|a|\leq 3}
\biggl(
\int_0^T\!\!\!\int_{{\mathbb R}^3}
\langle t-r\rangle^{-1-2\eta}
\sum_{i=1}^3
|T_i Z^a u_1(t,x)|^2dtdx
\biggr)^{1/2}\\
&
\hspace{1cm}
+
\sum_{|a|\leq 3}
\sup_{0<t<T}
(1+t)^{-\delta}
\biggl(
\int_0^t\!\!\!\int_{{\mathbb R}^3}
\langle \tau-r\rangle^{-1-2\eta}
\sum_{i=1}^3
|T_i Z^a u_2(\tau,x)|^2d\tau dx
\biggr)^{1/2}
\nonumber\\
&
\hspace{0.2cm}
\leq
C_1W_4(u_1(0))
+
C_2W_4(u_2(0))
+
C_3\bigl(
W_4(u_1(0))+W_4(u_2(0))
\bigr)^2.\nonumber
\end{align}
\end{proposition}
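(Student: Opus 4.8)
The plan is to derive (3.47) by applying the two ghost-weight energy estimates, Proposition 3.1 and Proposition 3.2, to the differentiated unknowns $Z^a u_1$ and $Z^a u_2$ for all multi-indices $a$ with $|a|\le 3$, and then summing over $a$. First one checks that the hypotheses of Propositions 3.1 and 3.2 hold with $\varphi=u_1$, $\psi=u_2$ and with $g_1^{11,\alpha\beta\gamma}=G_1^{11,\alpha\beta\gamma}$, $g_1^{21,\alpha\beta\gamma}=G_1^{21,\alpha\beta\gamma}$, $g_2^{12,\alpha\beta\gamma}=G_2^{12,\alpha\beta\gamma}$, $g_2^{22,\alpha\beta\gamma}=G_2^{22,\alpha\beta\gamma}$: the symmetry and null conditions required there are exactly those assumed in (1.11)--(1.12), while the smallness conditions (3.10) and (3.12) follow from (3.45)--(3.46), since
\[
\sup_{0<t<T}\bigl(W_4(u_1(t))+(1+t)^{-\delta}W_4(u_2(t))\bigr)
=\sup_{0<t<T}A(u(t))
\le C_0\bigl(W_4(u_1(0))+W_4(u_2(0))\bigr)
\le\min\{\varepsilon_1,\varepsilon_2\}.
\]
With these choices the two equations of (1.10) take the form $P_1 u_1=F_1$, $P_2 u_2=F_2$, where $F_1$ and $F_2$ are minus the semilinear quadratic terms $H_i^{\bullet,\alpha\beta}(\partial_\alpha u_j)(\partial_\beta u_k)$.

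For each $|a|\le 3$ one commutes $Z^a$ through these equations. By the commutation relations (2.1)--(2.2) and Lemma 2.1 this yields $P_1(Z^a u_1)=F_{1,a}$ and $P_2(Z^a u_2)=F_{2,a}$, where $F_{1,a}$ and $F_{2,a}$ are finite sums of quadratic expressions $B^{\alpha\beta\gamma}(\partial_\gamma Z^b u_i)(\partial^2_{\alpha\beta}Z^c u_j)$ and $D^{\alpha\beta}(\partial_\alpha Z^b u_i)(\partial_\beta Z^c u_j)$ with $|b|+|c|\le|a|$. Every term of $F_{1,a}$ has coefficients obeying the null condition (this uses (1.11)--(1.12) together with the stability of the null condition under differentiation, Lemma 2.1); in $F_{2,a}$ the terms descending from $\{G_2^{22}\},\{H_2^{22}\}$ are of null type, while those descending from $G_2^{12},H_2^{12},H_2^{11}$ are not, but each of them carries a factor $\partial_\gamma Z^b u_1$ with $|b|\le|a|$. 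Applying Proposition 3.1 to $Z^a u_1$ and Proposition 3.2 to $Z^a u_2$, summing over $|a|\le 3$, and using $E_1(Z^a v)=\frac12\|\partial Z^a v\|_{L^2}^2$ and $\sum_{|a|\le3}\sup_t\ge\sup_t\sum_{|a|\le3}$, one bounds the left-hand side of (3.47), up to fixed numerical factors, by $C_1 W_4(u_1(0))+C_2 W_4(u_2(0))$ plus the two forcing quantities $\sum_{|a|\le3}\int_0^T\|F_{1,a}(t)\|_{L^2}\,dt$ and $\sum_{|a|\le3}\sup_{0<t<T}(1+t)^{-2\delta}\int_0^t(1+\tau)^\delta\|F_{2,a}(\tau)\|_{L^2}\,d\tau$; here one uses that $\sum_{|a|\le3}\|\partial Z^a u_i(0)\|_{L^2}\lesssim W_4(u_i(0))$ once the second- to fourth-order time derivatives of the data are expressed through the equations. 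So it remains to bound the two forcing quantities by $C_3\varepsilon^2$ with $\varepsilon:=W_4(u_1(0))+W_4(u_2(0))$.

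For a null term of $F_{1,a}$, Lemma 2.2 rewrites it pointwise as a sum of products one factor of which is a good derivative $T(\cdot)$. When the good factor lies on the higher-order unknown (whose $Z$-order is $\le 3$), one keeps it, weighted by $\langle t-r\rangle^{-1/2-\eta}$, in $L^2_x$---its weighted space-time $L^2$ norm being exactly controlled by the left-hand side of (3.46), hence by $C_0\varepsilon$---while the partner factor then carries at most two $Z$'s and is placed in $L^\infty_x$ through Klainerman--Sobolev (Lemma 2.4), which supplies the decay $(1+t+r)^{-1}\langle t-r\rangle^{-1/2}W_4(u_1(t))$ on $\{|x|<(t/2)+1\}$, whereas on the complementary region one uses instead the trace inequality (Lemma 2.5) together with Sobolev embedding on $S^2$, so that no constant depends on the support radius $R_*$ (this is precisely where the argument departs from Alinhac's use of (1.25)). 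A Cauchy--Schwarz in $t$ together with $\int_0^\infty(1+t)^{-2+2\eta}\,dt<\infty$ then bounds such a contribution by $\lesssim C_0\varepsilon^2$. For the ``balanced'' terms, in which both factors carry exactly three $Z$'s, one instead uses Hölder $L^4_x\times L^4_x$ with the weighted $L^4$-Sobolev inequality (2.14), which produces two additional powers of $(1+t)^{-1}$ and a convergent time integral. The non-null terms of $F_{2,a}$ are handled by the favourable behaviour of $u_1$: the factor $\partial_\gamma Z^b u_1$ satisfies $\|\partial_\gamma Z^b u_1(t)\|_{L^\infty}\lesssim(1+t)^{-1}W_4(u_1(t))\lesssim(1+t)^{-1}\varepsilon$ when $|b|\le 1$ (and one passes to $L^4_x$ when $|b|=2$), while its partner is $\lesssim W_4(u_2(t))\lesssim(1+t)^\delta C_0\varepsilon$ in $L^2_x$ (or in $L^4_x$), so that
\[
\int_0^t(1+\tau)^\delta\|(\text{non-null term})(\tau)\|_{L^2}\,d\tau
\lesssim C_0\varepsilon^2\int_0^t(1+\tau)^{-1+2\delta}\,d\tau
\lesssim C_0\varepsilon^2(1+t)^{2\delta};
\]
dividing by $(1+t)^{2\delta}$ again gives $\lesssim C_0\varepsilon^2$. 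Collecting all contributions yields (3.47) with $C_3$ depending only on the coefficients of (1.10), $\delta$, and $\eta$.

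The step I expect to be the main obstacle is the top-order ($|a|=3$) estimation of $F_{1,a}$ and $F_{2,a}$: a naive $L^\infty_x\times L^2_x$ splitting would call for control of the fifth-order energy $W_5$, which the hypothesis (3.46) does not provide. This is overcome by never placing an order-three factor in $L^\infty$, but instead always pairing the good derivative produced by the null structure (Lemma 2.2) with the weighted space-time $L^2$ norms of $T_i Z^a u$ furnished by (3.46), and by using (2.14) for the balanced terms. For the second equation one must additionally exploit that the non-null terms always contain a factor involving $u_1$, whose $W_4$-energy stays bounded uniformly in $t$, so that these terms genuinely decay like $(1+t)^{-1}$; this is exactly what allows the $(1+t)^\delta$-weighted time integral of $\|F_{2,a}\|_{L^2}$ to close, and it is the energy-estimate manifestation of the fact that (1.10) satisfies the weak---though not the genuine---null condition. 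A secondary but essential point, needed to keep $\varepsilon$ and all constants independent of $R_*$, is that all estimates on $\{|x|>(t/2)+1\}$ are carried out through Lemma 2.5 rather than through (1.25).
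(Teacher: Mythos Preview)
Your outline follows the paper's argument closely: apply Propositions 3.1 and 3.2 to $Z^a u_1$ and $Z^a u_2$, commute using Lemma 2.1, split $\mathbb{R}^3$ into $B_t=\{|x|<t/2+1\}$ and its complement, use Klainerman--Sobolev and the $L^4$ inequality (2.14) inside, and use Lemma 2.2 together with the trace inequality (2.15) outside. Your treatment of the non-null terms in $F_{2,a}$ is also exactly the paper's.

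There is, however, a genuine gap in your handling of the top-order null terms of $F_{1,a}$ that involve $u_2$. You write that the weighted space-time $L^2$ norm of $T Z^b u_2$ is ``exactly controlled by the left-hand side of (3.46), hence by $C_0\varepsilon$''. This is false: (3.46) only bounds $(1+t)^{-\delta}\|\langle\tau-r\rangle^{-1/2-\eta}T Z^b u_2\|_{L^2((0,t)\times\mathbb{R}^3)}$, so the norm itself can grow like $(1+t)^{\delta}$. Your proposed Cauchy--Schwarz in $t$ with $\int_0^\infty(1+t)^{-2+2\eta}\,dt<\infty$ then yields a bound $\lesssim\varepsilon^2(1+T)^{\delta}$, which does not close. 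The paper resolves this (see (3.66)) by a dyadic decomposition of the time interval: on each $[2^j,2^{j+1}]$ one has, by Cauchy--Schwarz,
\[
\int_{2^j}^{2^{j+1}}(1+t)^{-1+\eta+\delta}\|\langle t-r\rangle^{-1/2-\eta}T Z^b u_2\|_{L^2_x}\,dt
\lesssim(2^j)^{-1/2+\eta+\delta}\cdot(2^{j+1})^{\delta}\,C_0\varepsilon,
\]
and the exponents sum to $-\tfrac12+\eta+2\delta<0$, so the series converges. This dyadic step is the missing ingredient; without it the contribution of the terms (3.60)--(3.61) (those with $|b|=3$ falling on $u_2$) cannot be bounded uniformly in $T$.

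A minor slip: your phrase ``balanced terms, in which both factors carry exactly three $Z$'s'' is impossible, since $|b|+|c|\le 3$. The actual ``balanced'' case is $|b|\le 2$, $|c|\le 1$ (or vice versa for the semilinear terms), and there the paper uses $L^4\times L^4$ via (2.14), or the mixed $L^\infty_r L^4_\omega\times L^2_r L^4_\omega$ splitting via (2.15), exactly as you indicate.
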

\begin{proof}
Let us start with the energy estimate for $u_1$. 
In view of Proposition 3.1 with 
$u$ replaced by $Z^a u_1$ $(|a|\leq 3)$, 
we need to deal with the integral of
\begin{equation}
\|
\bigl(
\partial_t^2-\Delta
+
\sum_{i=1}^2
G_1^{i1,\alpha\beta\gamma}
(\partial_\gamma u_i)\partial^2_{\alpha\beta}
\bigr)
Z^a u_1(t)
\|_{L^2({\mathbb R}^3)}
\end{equation}
$(|a|\leq 3)$ from $0$ to $T$. 
Moreover, thanks to (2.1), (2.3)--(2.4), 
it suffices to bound the integral from $0$ to $T$ of 
\begin{align}
&
\sum_{{|b|+|c|\leq 3}\atop{|c|\leq 2}}
\left\|
G_{bc}^{\alpha\beta\gamma}
(\partial_\gamma Z^b u_1)
(\partial^2_{\alpha\beta}Z^c u_1)
\right\|_{L^2({\mathbb R}^3)},\\
&
\sum_{{|b|+|c|\leq 3}\atop{|c|\leq 2}}
\left\|
{\tilde G}_{bc}^{\alpha\beta\gamma}
(\partial_\gamma Z^b u_2)
(\partial^2_{\alpha\beta}Z^c u_1)
\right\|_{L^2({\mathbb R}^3)},\\
&
\sum_{|b|+|c|\leq 3}
\left\|
H_{bc}^{\alpha\beta}
(\partial_\alpha Z^b u_1)
(\partial_\beta Z^c u_1)
\right\|_{L^2({\mathbb R}^3)},\\
&
\sum_{|b|+|c|\leq 3}
\left\|
{\tilde H}_{bc}^{\alpha\beta}
(\partial_\alpha Z^b u_1)
(\partial_\beta Z^c u_2)
\right\|_{L^2({\mathbb R}^3)},\\
&
\sum_{|b|+|c|\leq 3}
\left\|
{\hat H}_{bc}^{\alpha\beta}
(\partial_\alpha Z^b u_2)
(\partial_\beta Z^c u_2)
\right\|_{L^2({\mathbb R}^3)},
\end{align}
where the coefficients 
$\{G_{bc}^{\alpha\beta\gamma}\},\dots, \{{\hat H}_{bc}^{\alpha\beta}\}$ 
satisfy the null condition. 
We may focus on (3.50), (3.53), 
because we can treat the others in a similar (and a little easier) way. 

We separate ${\mathbb R}^3$ 
into the two pieces 
$B_t:=\{x\in{\mathbb R}^3:|x|<(t/2)+1\}$ 
and its complement $B_t^c$, 
when handling the $L^2$-norm 
of (3.50), (3.53). 
Let us first consider the $L^2$-norm over $B_t$, 
where we exploit an improved decay rate of solutions. 
For $|b|\leq 1$ and $|c|=2$ 
or $|b|=3$ and $|c|=0$, 
we get by (2.13)
\begin{equation}
\|
(\partial_\gamma Z^b u_2)
(\partial^2_{\alpha\beta}Z^c u_1)
\|_{L^2(B_t)}
\leq
C(1+t)^{-(3/2)+\delta}
\bigl((1+t)^{-\delta}W_4(u_2)\bigr)
W_4(u_1).
\end{equation}
Moreover, for $|b|\leq 2$ and $|c|\leq 1$, 
we get by the H\"older inequality and (2.14) with $p=4$
\begin{align}
\|
(\partial_\gamma Z^b u_2)
(\partial^2_{\alpha\beta}Z^c u_1)
\|_{L^2(B_t)}
&
\leq
\|\partial_\gamma Z^b u_2\|_{L^4(B_t)}
\|\partial_{\alpha\beta}^2 Z^c u_1\|_{L^4(B_t)}\\
&
\leq
C(1+t)^{-(3/2)+\delta}
\bigl((1+t)^{-\delta}W_4(u_2)\bigr)
W_4(u_1).\nonumber
\end{align}
Similarly, 
for $(|b|,|c|)=(3,0), (0,3)$ we get by (2.13)
\begin{equation}
\|
(\partial_\alpha Z^b u_2)
(\partial_\beta Z^c u_2)
\|_{L^2(B_t)}
\leq
C(1+t)^{-(3/2)+2\delta}
\bigl((1+t)^{-\delta}W_4(u_2)\bigr)^2.
\end{equation}
Also, for $|b|\leq 1$ and $|c|\leq 2$ 
or for $|b|\leq 2$ and $|c|\leq 1$, 
we get
\begin{align}
\|
(\partial_\alpha Z^b u_2)
(\partial_\beta Z^c u_2)
\|_{L^2(B_t)}
&
\leq
\|\partial_\alpha Z^b u_2\|_{L^4(B_t)}
\|\partial_\beta Z^c u_2\|_{L^4(B_t)}\\
&
\leq
C(1+t)^{-(3/2)+2\delta}
\bigl((1+t)^{-\delta}W_4(u_2)\bigr)^2.\nonumber
\end{align}
Next, let us consider the $L^2$-norm over $B_t^c$. 
Lemmas 2.2 and 2.3 play an important role here. 
For $|b|\leq 1$ and $|c|=2$, we get by (2.9)
\begin{align}
&
\left\|
{\tilde G}_{bc}^{\alpha\beta\gamma}
(\partial_\gamma Z^b u_2)
(\partial^2_{\alpha\beta}Z^c u_1)
\right\|_{L^2(B_t^c)}\\
&
\hspace{0.2cm}
\leq
C(1+t)^{-1}
\left(
\sum_{|b|\leq 1}
\|ZZ^b u_2\|_{L^\infty(B_t^c)}
+
\sum_{|b|\leq 1}
\|\partial Z^b u_2\|_{L^\infty(B_t^c)}
\right)
W_4(u_1)\nonumber\\
&
\hspace{0.2cm}
\leq
C(1+t)^{-(3/2)+\delta}
\bigl((1+t)^{-\delta}W_4(u_2)\bigr)
W_4(u_1),\nonumber
\end{align}
where we have used (2.15) 
with $p=4$, $s=1$ 
together with 
the Sobolev embedding 
$W^{1,4}(S^2)\hookrightarrow L^\infty(S^2)$. 
Moreover, for $|b|\leq 2$ and $|c|\leq 1$, 
we get by 
$(2.14)$ with $p=4$ 
and 
$(2.15)$ with $p=4$, $s=1$
\begin{align}
&
\left\|
{\tilde G}_{bc}^{\alpha\beta\gamma}
(\partial_\gamma Z^b u_2)
(\partial^2_{\alpha\beta}Z^c u_1)
\right\|_{L^2(B_t^c)}\\
&
\hspace{0.2cm}
\leq
C
(1+t)^{-1}
\bigl(
\|
ZZ^b u_2
\|_{L^{\infty}_r L^4_\omega(B_t^c)}
\|
\partial^2 Z^c u_1
\|_{L^2_r L^4_\omega(B_t^c)}\nonumber\\
&
\hspace{2.7cm}
+
\|
\partial Z^b u_2
\|_{L^4(B_t^c)}
\|
\partial ZZ^c u_1
\|_{L^4(B_t^c)}
\bigr)
\nonumber\\
&
\hspace{0.2cm}
\leq
C(1+t)^{-(3/2)+\delta}
\bigl((1+t)^{-\delta}W_4(u_2)\bigr)
W_4(u_1).\nonumber
\end{align}
Here, we have used the notation
\begin{align*}
&
\|w\|_{L_r^\infty L_\omega^4(B_t^c)}
:=
\sup_{r>(t/2)+1}
\|w(r\cdot)\|_{L^4(S^2)},\\
&
\|w\|_{L_r^2 L_\omega^4(B_t^c)}
:=
\biggl(
\int_{(t/2)+1}^\infty \|w(r\cdot)\|_{L^4(S^2)}^2 r^2dr
\biggr)^{1/2}.\nonumber
\end{align*}
On the other hand, 
for $|b|=3$ and $|c|=0$ 
we employ (2.5) and (2.13) to get
\begin{align}
&
\left\|
{\tilde G}_{bc}^{\alpha\beta\gamma}
(\partial_\gamma Z^b u_2)
(\partial^2_{\alpha\beta} u_1)
\right\|_{L^2(B_t^c)}\\
&
\hspace{0.2cm}
\leq
C(1+t)^{-1+\eta}
\|
\langle
t-r
\rangle^{-(1/2)-\eta}
TZ^b u_2
\|_{L^2({\mathbb R}^3)}
W_4(u_1)
+
CW_4(u_2)
\|
T\partial u_1
\|_{L^\infty({\mathbb R}^3)}.\nonumber
\end{align}
Similarly, for $(|b|,|c|)=(3,0), (0,3)$, 
we employ (2.6) to get
\begin{align}
&
\left\|
{\hat H}_{bc}^{\alpha\beta}
(\partial_\alpha Z^b u_2)
(\partial_\beta Z^c u_2)
\right\|_{L^2(B_t^c)}\\
&
\hspace{0.2cm}
\leq
C(1+t)^{-1+\eta+\delta}
\left(
\sum_{|d|=3}
\|
\langle
t-r
\rangle^{-(1/2)-\eta}
TZ^d u_2
\|_{L^2({\mathbb R}^3)}
\right)
\bigl(
(1+t)^{-\delta}
W_4(u_2)
\bigr)\nonumber\\
&
\hspace{0.56cm}
+
C
\|
Tu_2(t)
\|_{L^\infty(B_t^c)}
W_4(u_2).\nonumber
\end{align}
For $|b|\leq 2$ and $|c|\leq 1$ 
or 
$|b|\leq 1$ and $|c|\leq 2$, 
we use (2.10) to obtain
\begin{equation}
\left\|
{\hat H}_{bc}^{\alpha\beta}
(\partial_\alpha Z^b u_2)
(\partial_\beta Z^c u_2)
\right\|_{L^2(B_t^c)}
\leq
C(1+t)^{-(3/2)+2\delta}
\bigl(
(1+t)^{-\delta}
W_4(u_2)
\bigr)^2
\end{equation}
as in (3.59). 

Since $\delta$ and $\eta$ are sufficiently small, 
we have only to explain how to 
handle the integral from $0$ to $T$ of 
$\|T\partial u_1(t)\|_{L^\infty({\mathbb R}^3)}W_4(u_2(t))$, 
$\|T u_2(t)\|_{L^\infty(B_t^c)}W_4(u_2(t))$, and 
\begin{equation}
(1+t)^{-1+\eta+\delta}
\sum_{|d|=3}
\|
\langle
t-r
\rangle^{-(1/2)-\eta}
TZ^d u_2
\|_{L^2({\mathbb R}^3)}
\end{equation}
(see (3.60), (3.61)). 
Using (2.12) first and then (2.13), 
we get
\begin{equation}
\|
T\partial u_1(t)
\|_{L^\infty({\mathbb R}^3)}W_4(u_2(t))
\leq
C
(1+t)^{-2+\delta}
W_4(u_1(t))
\bigl(
(1+t)^{-\delta}
W_4(u_2(t))
\bigr).
\end{equation}
Moreover, 
using (2.12), 
the Sobolev embedding 
$W^{1,4}(S^2)\hookrightarrow L^\infty(S^2)$, 
and (2.15), 
we obtain
\begin{align}
\|T u_2(t)\|_{L^\infty(B_t^c)}W_4(u_2(t))
&
\leq
C(1+t)^{-1}
\sum_{|a|=1}
\|
Z^a u_2(t)
\|_{L^\infty(B_t^c)}
W_4(u_2(t))\\
&
\leq
C(1+t)^{-1}
\sum_{|a|=1}^2
\|
Z^a u_2(t)
\|_{L_r^\infty L_\omega^4 (B_t^c)}
W_4(u_2(t))\nonumber\\
&
\leq
C(1+t)^{-(3/2)+2\delta}
\bigl(
(1+t)^{-\delta}
W_4(u_2(t))
\bigr)^2.\nonumber
\end{align}
The estimates (3.64)--(3.56) are strong enough 
to get the desirable bound. 
When considering the integral of (3.63) from $0$ to $T$, 
we may suppose $T>1$ without loss of generality. 
Using the dyadic decomposition of the interval 
$(0,T)$, 
we get as in page 363 of \cite{Sogge2003}, 
\begin{align}
&
\int_0^T
(1+t)^{-1+\eta+\delta}
\|
\langle
t-r
\rangle^{-(1/2)-\eta}
TZ^d u_2
\|_{L^2({\mathbb R}^3)}
dt
\leq
\int_0^1\cdots dt
+
\sum_{j=0}^{N(T)}
\int_{2^j}^{2^{j+1}}
\cdots
dt\\
&
\hspace{0.2cm}
\leq
C\sup_{0<t<1}
\|
\partial Z^d u_2
\|_{L^2({\mathbb R}^3)}\nonumber\\
&
\hspace{0.56cm}
+
\sum_{j=0}^{N(T)}
\left(
\int_{2^j}^{2^{j+1}}
(1+t)^{-2+2(\eta+\delta)}dt
\right)^{1/2}
\|
\langle
t-r
\rangle^{-(1/2)-\eta}
TZ^d u_2
\|_{L^2((2^{j},2^{j+1})\times{\mathbb R}^3)}
\nonumber\\
&
\hspace{0.2cm}
\leq
C\sup_{0<t<T}
(1+t)^{-\delta}
\|
\partial Z^d u_2
\|_{L^2({\mathbb R}^3)}\nonumber\\
&
\hspace{0.56cm}
+
C
\left(
\sum_{j=0}^\infty
(2^j)^{-(1/2)+\eta+2\delta}
\right)
\sup_{0\leq j\leq N(T)}
(2^j)^{-\delta}
\|
\langle
t-r
\rangle^{-(1/2)-\eta}
TZ^d u_2
\|_{L^2((0,2^{j+1})\times{\mathbb R}^3)}\nonumber\\
&
\hspace{0.2cm}
\leq
C
\sup_{0<t<T}
(1+t)^{-\delta}
W_4(u_2(t))\nonumber\\
&
\hspace{0.56cm}
+
C
\sup_{0<t<T}
(1+t)^{-\delta}
\|
\langle
\tau-r
\rangle^{-(1/2)-\eta}
TZ^d u_2
\|_{L^2((0,t)\times{\mathbb R}^3)}.\nonumber
\end{align}
Here, $N(T)$ stands for the natural number 
such that 
$2^{N(T)}<T\leq 2^{N(T)+1}$, 
and in (3.66) we have abused the notation to mean 
$T$ by $2^{N(T)+1}$. 
We have also used the fact that the series above converges 
because $\delta$ and $\eta$ are small so that 
$-(1/2)+\eta+2\delta<0$. 

Taking the assumption (3.46) into account, 
we have finally shown
\begin{align}
&
\sum_{{|b|+|c|\leq 3}\atop{|c|\leq 2}}
\int_0^T
\left\|
\sum_{\alpha,\beta,\gamma=0}^3
{\tilde G}_{bc}^{\alpha\beta\gamma}
(\partial_\gamma Z^b u_2)
(\partial^2_{\alpha\beta}Z^c u_1)
\right\|_{L^2({\mathbb R}^3)}
dt\\
&
\hspace{0.56cm}
+
\sum_{|b|+|c|\leq 3}
\int_0^T
\left\|
\sum_{\alpha,\beta=0}^3
{\hat H}_{bc}^{\alpha\beta}
(\partial_\alpha Z^b u_2)
(\partial_\beta Z^c u_2)
\right\|_{L^2({\mathbb R}^3)}
dt\nonumber\\
&
\hspace{0.2cm}
\leq
C
\bigl(
W_4(u_1(0))
+
W_4(u_2(0))
\bigr)^2,\nonumber
\end{align}
as desired. 
The integral from $0$ to $T$ 
of the $L^2({\mathbb R}^3)$-norm 
in (3.49), (3.51), and (3.52) 
has the same bound. 
We have finished the energy estimate for $u_1$. 

Let us turn our attention to the energy estimate for $u_2$. 
Taking Proposition 3.2 into account and 
recalling the argument above for the energy estimate for $u_1$, 
we may focus on 
\begin{align}
&
\sum_{{|b|+|c|\leq 3}\atop{|c|\leq 2}}
\left\|
G_{bc}^{\alpha\beta\gamma}
(\partial_\gamma Z^b u_1)
(\partial^2_{\alpha\beta}Z^c u_2)
\right\|_{L^2({\mathbb R}^3)},\\
&
\sum_{|b|+|c|\leq 3}
\left\|
H_{bc}^{\alpha\beta}
(\partial_\alpha Z^b u_1)
(\partial_\beta Z^c u_2)
\right\|_{L^2({\mathbb R}^3)},\\
&
\sum_{|b|+|c|\leq 3}
\left\|
{\tilde H}_{bc}^{\alpha\beta}
(\partial_\alpha Z^b u_1)
(\partial_\beta Z^c u_1)
\right\|_{L^2({\mathbb R}^3)},
\end{align}
where the coefficients 
$\{G_{bc}^{\alpha\beta\gamma}\}$, 
$\{H_{bc}^{\alpha\beta}\}$, 
$\{{\tilde H}_{bc}^{\alpha\beta}\}$ 
do not necessarily satisfy the null condition. 
Here we safely omit the explanation of 
how to handle such terms as 
${\tilde G}^{\alpha\beta\gamma}_{bc}
(\partial_\gamma Z^b u_2)
(\partial_{\alpha\beta}^2 Z^c u_2)$, 
${\hat H}^{\alpha\beta}_{bc}
(\partial_\alpha Z^b u_2)
(\partial_\beta Z^c u_2)$ 
with $\{{\tilde G}^{\alpha\beta\gamma}_{bc}\}$, 
$\{{\hat H}^{\alpha\beta}_{bc}\}$ satisfying the null condition, 
because we have already done with the estimate for them 
in the course of the energy estimate for $u_1$ above. 
Using (2.13)--(2.14), 
we get in the standard way
\begin{align}
&
(1+t)^{-2\delta}
\int_0^t
(1+\tau)^\delta
\sum_{{|b|+|c|\leq 3}\atop{|c|\leq 2}}
\left\|
G_{bc}^{\alpha\beta\gamma}
(\partial_\gamma Z^b u_1)
(\partial^2_{\alpha\beta}Z^c u_2)
\right\|_{L^2({\mathbb R}^3)}
d\tau\\
&
\hspace{0.2cm}
\leq
C(1+t)^{-2\delta}
\int_0^t(1+\tau)^{-1+2\delta}d\tau
\left(
\sup_{0<t<T}
W_4(u_1(t))
\right)
\left(
\sup_{0<t<T}
(1+t)^{-\delta}
W_4(u_2(t))
\right)\nonumber\\
&
\hspace{0.2cm}
\leq
C
\bigl(
W_4(u_1(0))
+
W_4(u_2(0))
\bigr)^2.\nonumber
\end{align}
We can deal with (3.69), (3.70) 
in the same way as we have just done in (3.71). 
We have finished the energy estimate for $u_2$. 
The proof of Proposition 3.3 has been finished. 
\end{proof}
Now we are ready to complete the proof of 
the key a priori estimate (3.6). 
Thanks to the size condition (3.1), 
we see by (3.47) that 
there holds for all $T\in (0,T_*)$
\begin{align}
&
\sup_{0<t<T}
A(u(t))
+
\sum_{|a|\leq 3}
\biggl(
\int_0^T\!\!\!\int_{{\mathbb R}^3}
\langle t-r\rangle^{-1-2\eta}
\sum_{i=1}^3
|T_i Z^a u_1(t,x)|^2dtdx
\biggr)^{1/2}\\
&
\hspace{0.56cm}
+
\sum_{|a|\leq 3}
\sup_{0<t<T}
(1+t)^{-\delta}
\biggl(
\int_0^t\!\!\!\int_{{\mathbb R}^3}
\langle \tau-r\rangle^{-1-2\eta}
\sum_{i=1}^3
|T_i Z^a u_2(\tau,x)|^2d\tau dx
\biggr)^{1/2}\nonumber\\
&
\hspace{0.2cm}
\leq
\frac{C_0}{2}
\bigl(
W_4(u_1(0))
+
W_4(u_2(0))
\bigr)
+
\frac{C_0}{6}
\bigl(
W_4(u_1(0))+W_4(u_2(0))
\bigr)\nonumber\\
&
\hspace{0.2cm}
=\frac{2}{3}C_0
\bigl(
W_4(u_1(0))
+
W_4(u_2(0))
\bigr),\nonumber
\end{align}
which completes the proof of (3.6).

We are in a position to show that 
the local solution exists globally in time. 
We first remark that the equality $T_*=T^*$ holds. 
(The definition of $T_*$ and $T^*$ 
is given below (3.2) and (3.5), respectively.) 
Indeed, if we suppose $T_*<T^*$, 
then we see by (3.6), (3.4) and the absolute continuity of integral 
that 
there exists $T'\in (T_*,T^*)$ such that 
\begin{align}
&
\sup_{0<t<T'}A(u(t))
+
\sum_{|a|\leq 3}
\biggl(
\int_0^{T'}\!\!\!\int_{{\mathbb R}^3}
\langle t-r\rangle^{-1-2\eta}
\sum_{i=1}^3
|T_i Z^a u_1(t,x)|^2dtdx
\biggr)^{1/2}\\
&
\hspace{0.56cm}
+
\sum_{|a|\leq 3}
\sup_{0<t<T'}
(1+t)^{-\delta}
\biggl(
\int_0^t\!\!\!\int_{{\mathbb R}^3}
\langle \tau-r\rangle^{-1-2\eta}
\sum_{i=1}^3
|T_i Z^a u_2(\tau,x)|^2d\tau dx
\biggr)^{1/2}\nonumber\\
&
\hspace{0.2cm}
\leq
C_0
\bigl(
W_4(u_1(0))+W_4(u_2(0))
\bigr)
\nonumber
\end{align}
holds, 
which contradicts the definition of $T_*$. 

We now suppose $T^*<\infty$. 
Because of $T^*=T_*$, 
we know among others that 
the inequality 
$A(u(t))
\leq 
C_0
\bigl(
W_4(u_1(0))+W_4(u_2(0))
\bigr)
$ 
holds 
for all $t\in (0,T^*)$. 
If we solve the system (1.10) 
by giving the smooth, compactly supported initial data 
$(u(T^*-\delta,x),\partial_t u(T^*-\delta,x))$ 
at $t=T^*-\delta$ (by $\delta$ we mean a sufficiently small constant), 
then by the standard local existence theorem 
we know that there exists ${\hat T}>T^*$ such that 
the local solution can be continued to a larger strip 
$(0,{\hat T})\times {\mathbb R}^3$. 
Since the solution $u$ is smooth and compactly supported for any fixed time 
$t\in (0,{\hat T})$, 
we know $A(u(t))\in C([0,{\hat T}))$ 
and thus there exists 
${\tilde T}\in (T^*,{\hat T})$ such that 
the local solution satisfies 
$W_4(u_1(t))+(1+t)^{-\delta}W_4(u_2(t))\leq 2C_0$ 
for any $t\leq{\tilde T}$, 
which contradicts the definition of $T^*$. 
We therefore see $T^*=\infty$, and the local solution actually exists 
globally in time. 
This global solution obviously satisfies the estimate (1.13). 
The proof of Theorem 1.4 has been finished.
\section{Proof of Theorem 1.5}
Obviously, it suffices to show (1.18). 
In view of Proposition 3.1 and (3.28)--(3.53), 
we need to deal with
\begin{equation}
\sum_{{|b|+|c|\leq 4}\atop{|c|\leq 3}}
\left\|
G_{bc}^{\alpha\beta\gamma}
(\partial_\gamma Z^b u)
(\partial^2_{\alpha\beta}Z^c u)
\right\|_{L^2({\mathbb R}^3)},
\quad
\sum_{|b|+|c|\leq 4}
\left\|
H_{bc}^{\alpha\beta}
(\partial_\alpha Z^b u)
(\partial_\beta Z^c u)
\right\|_{L^2({\mathbb R}^3)},
\end{equation}
where the coefficients 
$\{G_{bc}^{\alpha\beta\gamma}\}$ 
and 
$\{H_{bc}^{\alpha\beta}\}$ 
satisfy the null condition. 
We again separate 
${\mathbb R}^3$ into the two pieces 
$B_t=\{x\in{\mathbb R}^3:|x|<(t/2)+1\}$ and $B_t^c$. 
We get by (2.13)--(2.14) for any $\alpha$, $\beta$, and $\gamma$ 
\begin{align}
&
\sum_{{|b|+|c|\leq 4}\atop{|c|\leq 3}}
\|
(\partial_\gamma Z^b u)
(\partial^2_{\alpha\beta}Z^c u)
\|_{L^2(B_t)}
+
\sum_{|b|+|c|\leq 4}
\|
(\partial_\alpha Z^b u)
(\partial_\beta Z^c u)
\|_{L^2(B_t)}\\
&
\hspace{0.2cm}
\leq
C(1+t)^{-3/2}
W_4(u(t))W_5(u(t))
\leq
C(1+t)^{-3/2}
W_4(u(0))W_5(u(t)).\nonumber
\end{align}
Moreover, 
by repeating essentially the same argument as in 
(3.58)--(3.62), we get
\begin{align}
&
\sum_{{|b|+|c|\leq 4}\atop{|c|\leq 3}}
\left\|
G_{bc}^{\alpha\beta\gamma}
(\partial_\gamma Z^b u)
(\partial^2_{\alpha\beta}Z^c u)
\right\|_{L^2(B_t^c)}
+
\sum_{|b|+|c|\leq 4}
\left\|
H_{bc}^{\alpha\beta}
(\partial_\alpha Z^b u)
(\partial_\beta Z^c u)
\right\|_{L^2(B_t^c)}\\
&
\hspace{0.2cm}
\leq
C(1+t)^{-3/2}
W_4(u(0))W_5(u(t))\nonumber\\
&
\hspace{0.56cm}
+
C\|T\partial u(t)\|_{L^\infty({\mathbb R}^3)}W_5(u(t))
+
C\|Tu(t)\|_{L^\infty(B_t^c)}W_5(u(t))
\nonumber\\
&
\hspace{0.56cm}
+
C(1+t)^{-1+\eta}
W_4(u(0))
\left(
\sum_{|d|=4}
\|
\langle
t-r
\rangle^{-(1/2)-\eta}
TZ^d u
\|_{L^2({\mathbb R}^3)}
\right).\nonumber
\end{align}
By (3.11), (4.2), and (4.3) 
(see also (3.64)--(3.66)), 
we get for all $t>0$
\begin{align}
&
W_5(u(t))
+
\sum_{|a|\leq 4}
\left(
\sum_{i=1}^3
\|
\langle \tau-r\rangle^{-(1/2)-\eta}
T_i Z^a u
\|_{L^2((0,t)\times{\mathbb R}^3)}^2
\right)^{1/2}\\
&
\hspace{0.2cm}
\leq
CW_5(u(0))
+
CW_4(u(0))
\int_0^t
(1+\tau)^{-3/2}W_5(u(\tau))d\tau
\nonumber\\
&
\hspace{0.56cm}
+
C_4
W_4(u(0))
\sum_{|a|=4}
\left(
\sum_{i=1}^3
\|
\langle \tau-r\rangle^{-(1/2)-\eta}
T_i Z^a u
\|_{L^2((0,t)\times{\mathbb R}^3)}^2
\right)^{1/2}\nonumber
\end{align}
for a constant $C_4>0$. 
Suppose $C_4W_4(u(0))\leq 1$, if necessary. 
Then, we obtain (1.18) by the Gronwall inequality. 
The proof of Theorem 1.5 has been completed.
\appendix
\section{Proof of (1.25)}
Let us prove (1.25) in general space dimensions $n\geq 2$. 
In the following, 
by $Z^a$ we denote the $n$-dimensional analogue of 
what we have meant in the preceding sections. 
Also, by $[p]$ we mean the greatest integer not greater than $p$. 
We prove:
\begin{proposition}
Let $n\geq 2$. 
Suppose $\phi(t,x)\in C^\infty((0,\infty)\times{\mathbb R}^n)$ 
and 
${\rm supp}\,\phi(t,\cdot)\subset
\{x\in{\mathbb R}^n:|x|<t+R\}$ 
for some constant $R>0$. 
Then, there exists a constant $C=C(n,R)>0$ 
with $C\to\infty$ as $R\to\infty$ such that 
\begin{equation}
(1+t)^{(n-1)/2}
(1+|t-r|)^{-1/2}
|\phi(t,x)|
\leq
C
\sum_{|a|\leq [(n-1)/2]+1}
\|
\partial_x
Z^a
\phi(t,\cdot)
\|_{L^2({\mathbb R}^n)}
\end{equation}
holds. 
\end{proposition}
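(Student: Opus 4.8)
The plan is to prove (A.1) separately in three regimes of $(t,r)$ with $r=|x|$, observing first that if $r\ge t+R$ the left-hand side vanishes by the support hypothesis, so we may assume $r<t+R$. In each regime the weight $(1+t)^{(n-1)/2}(1+|t-r|)^{-1/2}$ is comparable, with constants depending on $R$, to a single power of $1+t$, and the point is to recover that power together with $\sum_{|a|\le[(n-1)/2]+1}\|\partial_xZ^a\phi(t,\cdot)\|_{L^2}$ by different mechanisms.

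Regime 1 is $t\le 2$: here $\operatorname{supp}\phi(t,\cdot)\subset\{|x|<2+R\}=:B$, so the ordinary Sobolev embedding $H^{[n/2]+1}(B)\hookrightarrow C^0(B)$ gives $|\phi(t,x)|\le C(R)\sum_{|\alpha|\le[n/2]+1}\|\partial_x^\alpha\phi(t,\cdot)\|_{L^2}$ with the weight $\lesssim_R 1$; the term with $|\alpha|=0$ is bounded by $C(R)\|\partial_x\phi(t,\cdot)\|_{L^2}$ through the Poincaré inequality for functions supported in $B$, and each $|\alpha|\ge1$ contributes $\|\partial_xZ^a\phi\|_{L^2}$ with $|a|=|\alpha|-1\le[n/2]\le[(n-1)/2]+1$. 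Regime 2 is $t\ge2$, $r\le t/2$, where $|t-r|\asymp t$ so the weight is $\asymp t^{(n-2)/2}$; I would apply the Sobolev embedding on $B(x,\theta t)$ for a small fixed $\theta$ with $B(x,\theta t)\subset\{|y|<3t/4\}$, which after rescaling gives $|\phi(t,x)|\lesssim\sum_{|\alpha|\le[n/2]+1}t^{|\alpha|-n/2}\|\partial_x^\alpha\phi\|_{L^2(B(x,\theta t))}$. On $\{|y|<3t/4\}$ one has $t^2-|y|^2\gtrsim t^2$, so by (3.27) any $\partial_x$ equals $t^{-1}$ times a finite combination of the $Z$'s with coefficients that are $O(1)$ on that set together with their derivatives; iterating this on $\partial_x^{\alpha-e_j}(\partial_{x_j}\phi)$ and commuting $\partial_{x_j}$ to the front via (2.2) yields, for $|\alpha|\ge1$, $\|\partial_x^\alpha\phi\|_{L^2(B(x,\theta t))}\lesssim t^{-(|\alpha|-1)}\sum_{|a|\le|\alpha|-1}\|\partial_xZ^a\phi\|_{L^2(\mathbb{R}^n)}$, while for $|\alpha|=0$ the Poincaré inequality (support in $\{|y|<t+R\}$) gives $\|\phi(t,\cdot)\|_{L^2}\lesssim_R t\|\partial_x\phi(t,\cdot)\|_{L^2}$. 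Summing, every term collapses to $t^{1-n/2}\sum_{|a|\le[n/2]}\|\partial_xZ^a\phi(t,\cdot)\|_{L^2}$, which is the required estimate since $[n/2]\le[(n-1)/2]+1$.

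Regime 3, $t\ge2$ and $t/2<r<t+R$ so that $r\asymp_R t$, is the heart of the matter and the only place where the extra factor $\partial_x$ on the right and the compact support are genuinely exploited. Using the support I write $\phi(t,r\omega)=-\int_r^{t+R}(\partial_\rho\phi)(t,\rho\omega)\,d\rho$ with $\omega=x/r$; the length of this interval is $t+R-r\le(R+1)\langle t-r\rangle$, so Cauchy--Schwarz in $\rho$ gives $|\phi(t,r\omega)|^2\lesssim_R\langle t-r\rangle\int_r^{t+R}|\nabla_x\phi(t,\rho\omega)|^2\,d\rho$. Then, for each fixed $\rho$, I apply the Sobolev embedding on $S^{n-1}$ to $\nabla_x\phi(t,\rho\cdot)$ --- which costs exactly $[(n-1)/2]+1$ rotation fields $\Omega$, all of them among the $Z$'s --- getting $|\phi(t,r\omega)|^2\lesssim_R\langle t-r\rangle\sum_{|b|\le[(n-1)/2]+1}\int_r^{t+R}\!\!\int_{S^{n-1}}|\Omega^b\nabla_x\phi(t,\rho\theta)|^2\,d\theta\,d\rho$. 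Since $\rho\ge r$ on the domain, $1\le(\rho/r)^{n-1}$, so the last double integral is $\le r^{-(n-1)}\|\Omega^b\nabla_x\phi(t,\cdot)\|_{L^2(\mathbb{R}^n)}^2$; commuting $\Omega^b$ past $\nabla_x$ by (2.2) and using $r\asymp_R t$ gives $|\phi(t,x)|\lesssim_R\langle t-r\rangle^{1/2}(1+t)^{-(n-1)/2}\sum_{|b|\le[(n-1)/2]+1}\|\partial_xZ^b\phi(t,\cdot)\|_{L^2(\mathbb{R}^n)}$, i.e. (A.1).

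The main obstacle is Regime 3: one must select the outward radial ray --- which misses the origin and has length controlled by $\langle t-r\rangle$, both being consequences of the support --- and then balance the $\langle t-r\rangle^{1/2}$ coming from the one-dimensional Cauchy--Schwarz against the $r^{-(n-1)/2}$ lost in passing from $d\rho$ to $\rho^{n-1}\,d\rho$; it is precisely this bookkeeping that produces the weight $(1+t)^{(n-1)/2}(1+|t-r|)^{-1/2}$ and the dependence $C=C(n,R)$ with $C\to\infty$ as $R\to\infty$. A secondary technical point, in Regime 2, is that the zeroth-order term of the ordinary Sobolev inequality carries no derivative and must first be absorbed using the global support of $\phi(t,\cdot)$ via Poincaré before the remaining spatial derivatives can be traded for the vector fields $Z$; the derivative count there, $[n/2]$, is at most $[(n-1)/2]+1$, so it does not strain the bound.
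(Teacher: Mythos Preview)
Your Regime~3 argument (exterior, $t\ge2$, $t/2<r<t+R$) is exactly the paper's proof of the case $|x|>(t/2)+1$: integrate outward along the radial ray from $r$ to $t+R$, apply Cauchy--Schwarz in $\rho$, insert $\rho^{n-1}\ge r^{n-1}$, and finish with the Sobolev embedding on $S^{n-1}$.

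In the interior the two arguments differ. The paper does not separate small $t$; it applies the scaled Sobolev inequality on $\{|x|<(t/2)+1\}$ and converts all spatial derivatives to $Z$'s via (3.27), arriving at $\|\phi(t,\cdot)\|_{L^\infty(B_t)}\le C(1+t)^{-n/2}\sum_{|a|\le[n/2]+1}\|Z^a\phi\|_{L^2}$. To recover the missing $\partial_x$ it then invokes Lindblad's weighted Hardy inequality $\|(t+R-|\cdot|)^{-1}Z^a\phi\|_{L^2}\lesssim\|\partial_xZ^a\phi\|_{L^2}$, which supplies the extra factor $t+R$. Your route --- Poincar\'e on the support ball for the zeroth-order term, and retaining one $\partial_{x_j}$ from the start for $|\alpha|\ge1$ --- is more elementary (no appeal to \cite{Lind}) and your separate treatment of $t\le2$ is cleaner, since (3.27) is singular near $r=t$ and the paper's interior argument tacitly needs $t$ bounded away from zero.

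One small wrinkle in Regime~2: after writing $\partial_x^{\alpha-e_j}$ as combinations of $L,\Omega,S$ via (3.27) and then commuting $\partial_{x_j}$ past $Z^b$, the relation $[L_l,\partial_{x_j}]=-\delta_{lj}\partial_t$ in (2.2) generates terms $\partial_tZ^{b'}\phi$, not $\partial_xZ^{b'}\phi$. The clean fix is to reverse the order: write $\partial_x^\alpha\phi=\partial_{x_j}(\partial_x^{\alpha-e_j}\phi)$, convert the \emph{inner} block to $\sum_b c_b(t,y)Z^b\phi$ with $c_b=O(t^{-(|\alpha|-1)})$, and then apply the outer $\partial_{x_j}$; the terms where $\partial_{x_j}$ hits $c_b$ are $O(t^{-|\alpha|})\|Z^b\phi\|_{L^2}$ and are absorbed by Poincar\'e exactly as you do for $|\alpha|=0$. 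With this adjustment your count $|a|\le[n/2]\le[(n-1)/2]+1$ stands, and for even $n$ it is in fact one derivative tighter than what the paper's interior step (A.4) literally records.
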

\begin{proof}
We consider the two cases 
$|x|>(t/2)+1$ and $|x|<(t/2)+1$ separately. 
For the former, we follow the outline made 
on page 610 of \cite{Al2001}. 
Since 
$$
\phi(t,x)
=
\phi(t,r\omega)
-
\phi(t,(t+R)\omega)
=
\int_{t+R}^r
\frac{\partial}{\partial\rho}
\phi(t,\rho\omega)d\rho,
$$
we get by the H\"older inequality
\begin{align*}
|\phi(t,x)|
&
\leq
\int_r^{t+R}
|(\nabla\phi)(t,\rho\omega)|d\rho\\
&
\leq
(t-r+R)^{1/2}
\left(
\int_r^{t+R}
|\nabla\phi(t,\rho\omega)|^2
\rho^{n-1}d\rho
\right)^{1/2}
r^{-(n-1)/2}\nonumber
\end{align*}
whenever $r<t+R$. 
Combined with the Sobolev embedding on $S^{n-1}$, 
the last inequality yields
\begin{align}
&
r^{(n-1)/2}
(t-r+R)^{-1/2}
|\phi(t,x)|\\
&
\hspace{0.2cm}
\leq
C
\sum_{|a|\leq [(n-1)/2]+1}
\left(
\int_0^{t+R}\!\!\!\int_{S^{n-1}}
|(\nabla\Omega^a \phi)(t,\rho\omega)|^2
\rho^{n-1}d\rho dS
\right)^{1/2},\nonumber
\end{align}
which yields (A.1) for $|x|>(t/2)+1$. 

For the latter case $|x|<(t/2)+1$, 
we first note that by the Sobolev embedding 
$H^{[n/2]+1}(\Omega_1)
\hookrightarrow 
L^\infty(\Omega_1)$ 
and the standard scaling argument, 
the inequality
\begin{equation}
\|
\phi(t,\cdot)
\|_{L^\infty(\Omega_\lambda)}
\leq
C\lambda^{-n/2}
\sum_{|a|\leq [n/2]+1}
\lambda^{|a|}
\|
(\partial_x^a\phi)(t,\cdot)
\|_{L^2(\Omega_\lambda)}
\end{equation}
holds for all $\lambda>0$. 
Here and in the following, 
we use the notation 
$\Omega_\lambda
:=
\{x\in{\mathbb R}^n:|x|<\lambda\}$. 
Recalling the notation 
$B_t=\{x\in {\mathbb R}^n:|x|<(t/2)+1\}$ and 
using (3.27) together with the fact that 
for any $k\in{\mathbb N}$, 
$\Omega_{ij}\{(t^2-r^2)^{-k}\}
=
L_j\{(t^2-r^2)^{-k}\}=0$ 
and 
$S\{(t^2-r^2)^{-k}\}=(-2k)\{(t^2-r^2)^{-k}\}$, 
we get from (A.3) with $\lambda=(t/2)+1$
\begin{align}
\|
\phi(t,\cdot)
\|_{L^\infty(B_t)}
&
\leq
C(1+t)^{-n/2}
\sum_{|a|\leq [n/2]+1}
\|
Z^a \phi(t,\cdot)
\|_{L^2(B_t)}\\
&
\leq
C(1+t)^{-n/2}(t+R)
\sum_{|a|\leq [n/2]+1}
\left\|
\frac{1}{t-|\cdot|+R}
Z^a\phi(t,\cdot)
\right\|_{L^2(B_t)}\nonumber\\
&
\leq
C_R(1+t)^{-(n/2)+1}
\sum_{|a|\leq [n/2]+1}
\left\|
\frac{1}{t-|\cdot|+R}
Z^a\phi(t,\cdot)
\right\|_{L^2(\Omega_{t+R})}\nonumber\\
&
\leq
C_R(1+t)^{-(n/2)+1}
\sum_{|a|\leq [n/2]+1}
\|
\partial_x Z^a\phi(t,\cdot)
\|_{L^2({\mathbb R}^n)}.\nonumber
\end{align}
Here, we have used the well-known inequality of Lindblad \cite{Lind}. 
We see that by (A.4) that 
(A.1) holds also for $|x|<(n/2)+1$. 
The proof has been finished.
\end{proof}

 \bigskip
 \address{
Department of Mathematics\\
Faculty of Education\\
Mie University\\
1577 Kurima-machiya-cho,Tsu\\
Mie Prefecture 514-8507 JAPAN
}
 {hidano@edu.mie-u.ac.jp}
\address{
Hokkaido University of Science\\
7-Jo 15-4-1 Maeda, Teine, Sapporo\\
Hokkaido 006-8585 JAPAN
}
 {yokoyama@hus.ac.jp}
\end{document}